\documentclass[11pt,reqno]{amsart}
\usepackage{amsmath,amsfonts,amsthm,amssymb,geometry,graphics,enumerate,newlfont,latexsym,tabularx,shapepar}
%\usepackage[all,2cell,dvips]{xy}\UseAllTwocells\SilentMatrices
%\usepackage{tikz-cd}
%\usetikzlibrary{matrix,arrows,decorations.pathmorphing}
%\usepackage{pinlabel}

\usepackage{enumerate}
\usepackage{pinlabel}
\newtheorem{thm}{Theorem}[section]
\newtheorem{defin}[thm]{Definition}
\newtheorem{lemme}[thm]{Lemma}

\newtheorem{prop}[thm]{Proposition}
\newtheorem{cor}[thm]{Corollary}
\newtheorem{remark}{Remark}
\newtheorem{step}{Step}

\DeclareMathOperator{\lip}{Lip}
\DeclareMathOperator{\capa}{cap}
\DeclareMathOperator{\dime}{dim}
\DeclareMathOperator{\loc}{loc}
\DeclareMathOperator{\supp}{supp}
\DeclareMathOperator{\ric}{Ric}

\DeclareMathOperator{\dom}{Dom}
\DeclareMathOperator{\lipp}{lip}

\DeclareMathOperator{\CD}{CD}
\DeclareMathOperator{\RCD}{RCD}
\DeclareMathOperator{\MCP}{MCP}
\bibliographystyle{alpha}

\begin{document}
\title[Liouville-type Theorems]{Riemannian Polyhedra and Liouville-type Theorems for Harmonic maps}

\author{Zahra Sinaei}
%\date{\today}
%\email{zahra.sinaei@epfl.ch}
%\keyword{Harmonic maps, Riemannian polyhedra, pseudomanifolds, Liouville-type theorem, non-negative Ricci.}
%\subject{primary}{msc2010}{30Lxx,58E20}
%\subject{secondary}{msc2010}{53C43,31E05
\begin{abstract}
 This paper is a study of harmonic maps from Riemannian polyhedra to locally non-positively curved geodesic spaces in the sense of Alexandrov.  We prove Liouville-type theorems for subharmonic functions and harmonic maps under two different assumptions on the source space. First we prove the analogue of the Schoen-Yau Theorem on a complete  pseudomanifolds with non-negative Ricci curvature. Then we study $2$-parabolic admissible Riemannian polyhedra and prove some vanishing results on them.
\end{abstract}
\maketitle

\section{Introduction}
 Harmonic maps between singular spaces have received considerable attention since the early 1990s. Existence of energy minimizing locally Lipschitz maps from Riemannian manifolds into Bruhat-Tits buildings and Corlette's version of Margulis's
super-rigidity theorem were proved in \cite{GS92}. In \cite{KS93} Korevaar and Schoen constructed harmonic
maps from domains in Riemannian manifolds into Hadamard spaces as a boundary
value problem. The book \cite{F01} by Eells and Fuglede contains a description of the application of the methods of \cite{KS93,Jost94,Jost95,Jost97,KorevaarSchoen97,Jost98} to the study of maps between polyhedra, see also \cite{C95,DM08,DM10}.

Our first objective in this paper is to prove Liouville-type theorems for harmonic maps. We prove the analogue of the Schoen-Yau Theorem on complete (smooth) pseudomanifolds with non-negative Ricci curvature. To this end, we generalize some Liouville-type theorems for subharmonic functions from \cite{Y82}.
%Our first objective in this paper is to generalize some classical Liouvile-type theorems for functions and maps on complete Riemannian manifold with non-negative Ricci curvature to  complete admissible Riemannian polyhedra with the same properties.

The classical Liouville theorem for functions on manifolds states that on a complete Riemannian manifold with non-negative Ricci curvature, any harmonic function bounded from one side must be a constant. In \cite{Y82}, Yau proves  that there is no non-constant, smooth, non-negative,  $L^p$, $p>1$, subharmonic function on a complete Riemannian manifold.  He also proves that  every continuous subharmonic function defined on a complete Riemannian manifold whose local Lipschitz constant is bounded by $L^1$-function is also harmonic. Furthermore if the $L^1$-function belongs to $L^2$ as well, and the manifold has non-negative Ricci curvature, then the subharmonic function is constant. In the smooth setting, there are two types of assumptions that have been studied on the Liouville property of harmonic maps. One is the finiteness of the energy and the other is the smallness of the image. For example, Schoen and Yau in \cite{SY76}, proved that any non-constant harmonic map from a complete non-compact manifold of non-negative Ricci curvature to a manifold of non-positive sectional curvature has infinite energy. Hildebrandt-Jost-Widman \cite{H80} (see also \cite{H82,H85}) proved a Liouville-type theorem for harmonic maps into regular geodesic balls in a complete $C^3$-Riemannian manifold from a simple or compact $C^1$-Riemannian
manifold.  For more references for Liouville-type theorems for harmonic maps and functions in both smooth and singular setting see the introduction in \cite{S08}.

 A connected locally finite $n$-dimensional simplicial polyhedron $X$ is
called {\it{admissible}},  if $X$ is dimensionally $n$-homogeneous and  $X$ is locally $(n -1)$-chainable. It is called {\it{circuit}} if instead it is $(n-1)$-chainable and every $(n-1)$-simplex is the face of at most two $n$-simplex and {\it{pseudomanifold}} if it is admissible circuit. A polyhedron $X$ becomes a Riemannian polyhedron when endowed with a Riemannian structure $g$, defined by giving on each maximal simplex $s$ of $X$ a bounded measurable Riemannian metric $g$  equivalent to a Euclidean metric on $s$.
%See \cite{F01}. 

There exist slightly various notions of boundedness of  Ricci curvature from below on general metric spaces. See for example  \cite{S06, V09,O07,KS01,KS03,AGS12,EKS13,AMS13} and the references therein. In what follows by $\ric_{N,\mu_g}\geq K$ we mean that $(X,g,\mu_g)$ satisfies the either $\MCP(K,N)$ or $\CD(K,N)$ unless otherwise specified. %This is the convention adopted in \cite{O07,S06}.
As these definitions are somewhat technical we refer the reader to Section \ref{Ricci} for a precise statement.

The definition of harmonic maps from admissible Riemannian polyhedra to metric spaces is similar to the one in the smooth setting. However due to lack of smoothness some care is needed in defining the notions of energy density, the energy functional and energy minimizing maps. Precise definitions and related results can be found in Subsection \ref{HMORM}.

We can state now the main results which we obtain in this direction.
\begin{thm}\label{yau1}
Suppose $(X,g,\mu_g)$ is a complete, admissible Riemannian polyhedron and  $f\in W^{1,2}_{\loc}(X)\cap L^2(X)$ is a non-negative, weakly subharmonic function. Then $f$ is constant.
\end{thm}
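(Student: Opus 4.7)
The plan is to adapt Yau's classical $L^2$-Liouville argument for subharmonic functions to the polyhedral setting. The only structural ingredients really needed are (i) the existence of Lipschitz cutoff functions with controlled gradient on an exhausting family of balls, (ii) a Cauchy--Schwarz type inequality for the energy form, and (iii) the freedom to test the weak subharmonicity of $f$ against non-negative, compactly supported $W^{1,2}$ functions. All of these should be available in the Eells--Fuglede framework for admissible Riemannian polyhedra once $X$ is complete.

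Fix a base point $x_0\in X$ and, for $R>0$, set $\phi_R(x)=\eta(d(x,x_0)/R)$ where $\eta\colon[0,\infty)\to[0,1]$ is a fixed $1$-Lipschitz function equal to $1$ on $[0,1]$ and vanishing on $[2,\infty)$. Completeness of $(X,g)$ guarantees that $d(\cdot,x_0)$ is $1$-Lipschitz and that $\{\phi_R=1\}\supseteq B(x_0,R)$ exhausts $X$; in particular $\phi_R$ is a compactly supported Lipschitz function with $|\nabla\phi_R|\le C/R$ almost everywhere. Since $f\in W^{1,2}_{\loc}(X)$ and $\phi_R$ is bounded Lipschitz with compact support, the product $f\phi_R^2$ lies in the admissible class of non-negative test functions for the weak subharmonicity inequality.

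Testing the weak inequality $\Delta f\ge 0$ against $f\phi_R^2\ge 0$ gives
\begin{equation*}
\int_X \phi_R^2\,|\nabla f|^2\,d\mu_g \;\le\; -2\int_X f\,\phi_R\,\langle\nabla f,\nabla\phi_R\rangle\,d\mu_g.
\end{equation*}
Applying Cauchy--Schwarz to the right-hand side and absorbing one factor yields
\begin{equation*}
\int_X \phi_R^2\,|\nabla f|^2\,d\mu_g \;\le\; 4\int_X f^2\,|\nabla\phi_R|^2\,d\mu_g \;\le\; \frac{4C^2}{R^2}\int_{B(x_0,2R)} f^2\,d\mu_g.
\end{equation*}
Because $f\in L^2(X)$, the right-hand side tends to $0$ as $R\to\infty$. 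The monotone convergence of $\phi_R^2$ to $\mathbf{1}_X$ then forces $\int_X |\nabla f|^2\,d\mu_g=0$, so $f$ is locally constant; connectedness of the admissible polyhedron $X$ finally yields that $f$ is constant.

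The only delicate point I anticipate is the justification that $f\phi_R^2$ is an admissible test function for the weak subharmonic inequality as formulated for Riemannian polyhedra, and the verification of the chain rule $\nabla(f\phi_R^2)=\phi_R^2\nabla f+2f\phi_R\nabla\phi_R$ in the Dirichlet-form sense; both should follow from standard properties of the energy functional on admissible Riemannian polyhedra recalled in Subsection~\ref{HMORM}, but they are the place where care is needed since the metric $g$ is only bounded measurable on each maximal simplex.
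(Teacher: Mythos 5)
Your proposal is correct and follows essentially the same route as the paper: the same radial Lipschitz cutoff $\phi_R$ with $|\nabla\phi_R|\le C/R$, testing weak subharmonicity against $f\phi_R^2$, Cauchy--Schwarz plus absorption to get $\int\phi_R^2|\nabla f|^2\le \frac{4C^2}{R^2}\int_{B_{2R}}f^2$, and letting $R\to\infty$ using $f\in L^2$ (the paper phrases the Cauchy--Schwarz step as a discriminant condition on a quadratic, but this is the same estimate). The delicate points you flag --- admissibility of $f\phi_R^2$ as a test function and the passage from $|\nabla f|=0$ a.e.\ to constancy via the $(n-1)$-chainability of the admissible polyhedron --- are exactly the ones the paper also relies on.
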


%\textcolor{blue}{Check non-compactness}
\begin{thm}\label{pseudo}
Let $(X,g,\mu_g)$ be a complete,  smooth $n$-pseudomanifold. Suppose $X$ satisfies $\CD(0,n)$. Let $f$  be a continuous, weakly subharmonic function  belonging to $ W^{1,2}_{\loc}(X)$ such that both $\|\nabla f\|_{L^1}$ and $\|\nabla f\|_{L^2}$ are finite. Then $f$ is a constant function.
\end{thm}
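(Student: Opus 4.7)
The plan is to reduce Theorem \ref{pseudo} to Theorem \ref{sub-har} followed by Theorem \ref{yau1}. Since $\|\nabla f\|_{L^{1}(X)}<\infty$ and $(X,g,\mu_g)$ is a complete non-compact pseudomanifold, Theorem \ref{sub-har} applies immediately and promotes $f$ from weakly subharmonic to weakly \emph{harmonic}. From this point on I treat $f$ as a harmonic $W^{1,2}_{\loc}(X)$-function with $|\nabla f|\in L^{1}(X)\cap L^{2}(X)$, and invoke the full strength of the $CD(0,n)$ hypothesis.

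The central step is to prove that $u:=|\nabla f|$ is a non-negative, weakly subharmonic function lying in $W^{1,2}_{\loc}(X)\cap L^{2}(X)$. On the smooth open stratum $X_{\mathrm{reg}}$, elliptic regularity makes $f$ smooth, and the Bochner identity combined with the refined Kato inequality $|\hes f|^{2}\ge\tfrac{n}{n-1}|\nabla u|^{2}$ valid for a harmonic function gives
\[
 u\,\Delta u \;=\; |\hes f|^{2}+\ric(\nabla f,\nabla f)-|\nabla u|^{2}\;\ge\;\tfrac{1}{n-1}|\nabla u|^{2},
\]
because $\ric(\nabla f,\nabla f)\ge 0$ on $X_{\mathrm{reg}}$ by $CD(0,n)$. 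Thus $\Delta u\ge 0$ distributionally on $X_{\mathrm{reg}}$. To upgrade this to a global weak inequality on all of $X$ I would use that the singular set $\Sigma$ of a smooth $n$-pseudomanifold has codimension $\ge 2$ and therefore vanishing $2$-capacity, construct a family of cutoffs $\chi_{\varepsilon}$ supported away from an $\varepsilon$-tubular neighbourhood of $\Sigma$ with $\|\nabla\chi_{\varepsilon}\|_{L^{2}}\to 0$, and pass to the limit in $\int_{X}\nabla u\cdot\nabla(\chi_{\varepsilon}\varphi)\,d\mu_{g}\le 0$ for every non-negative $\varphi\in C_{c}^{\infty}(X)$.

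With $u$ established as non-negative, weakly subharmonic, and in $W^{1,2}_{\loc}(X)\cap L^{2}(X)$, Theorem \ref{yau1} applies directly and forces $u\equiv c$ for some constant $c\ge 0$. The Bishop--Gromov volume comparison under $CD(0,n)$, together with completeness and non-compactness of $X$, gives $\vol(X)=\infty$, so $c\in L^{2}(X)$ is only possible if $c=0$. Therefore $\nabla f\equiv 0$ almost everywhere and $f$ is constant by connectedness of $X$.

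The main obstacle I expect is the second paragraph: transferring the pointwise Bochner inequality on $X_{\mathrm{reg}}$ into a genuine \emph{global} weak subharmonicity of $|\nabla f|$ on $X$. The refined Kato identity and the $CD(0,n)$ condition control the pointwise content on the regular stratum, but threading the inequality through the codimension-$2$ singular skeleton requires a capacitary cut-off argument of the same flavour as the one used to prove Theorem \ref{yau1}. Once this global Bochner-type statement is in hand, the remainder of the proof is essentially a two-line combination of Theorem \ref{sub-har} and Theorem \ref{yau1}.
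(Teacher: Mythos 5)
Your overall strategy coincides with the paper's: get harmonicity of $f$ from the $L^1$ gradient bound, run Bochner on the regular stratum $M=X\backslash S$ where $CD(0,n)$ localizes to $\ric\ge 0$, push the subharmonicity of $|\nabla f|$ across the polar singular set, and finish with Theorem \ref{yau1} plus the infinite-volume Lemma \ref{bishop}. (Your first step, quoting Theorem \ref{sub-har} directly, is a legitimate shortcut; the paper instead re-derives harmonicity on $M$ via Greene--Wu smooth subharmonic approximation and Cheeger's Stokes theorem on doubled pseudomanifolds, explicitly as an alternative proof, and then uses Weyl's lemma. Either route is fine.)

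There is, however, a genuine gap in your second paragraph, and it is exactly the point the paper isolates as Step 6. To apply Theorem \ref{yau1} to $u=|\nabla f|$ you must have $u\in W^{1,2}_{\loc}(X)$, i.e.\ $\nabla u\in L^2$ \emph{locally across the singular set} $S$, and your capacitary cut-off already presupposes this: in
\[
\int_X \nabla u\cdot\nabla(\chi_\varepsilon\varphi)\,d\mu_g
=\int_X \varphi\,\nabla u\cdot\nabla\chi_\varepsilon\,d\mu_g+\int_X \chi_\varepsilon\,\nabla u\cdot\nabla\varphi\,d\mu_g ,
\]
the first term is only killed by $\|\nabla\chi_\varepsilon\|_{L^2}\to 0$ if $\nabla u\in L^2(\supp\varphi)$, which is precisely what is not known a priori --- $|\nabla u|\le|\hes f|$ and nothing so far prevents $\hes f$ from failing to be square-integrable near $S$. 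A Caccioppoli estimate from the refined Kato inequality reduces this to controlling $\int u^2|\nabla\chi_\varepsilon|^2\,d\mu_g$, which again requires $u$ to be \emph{locally bounded} near $S$; the same local boundedness is the hypothesis of the removability theorem (Theorem $12.2$ in \cite{B11}) that the paper invokes to extend subharmonicity across a polar set. The paper supplies this missing ingredient by first proving that the harmonic function $f$ is locally Lipschitz on $X$, quoting the Lipschitz regularity results of \cite{K13,J13} (see also \cite{HKX13}) for harmonic functions on metric measure spaces satisfying the curvature and Poincar\'e assumptions --- this is the input that, per the acknowledgements, allowed the author to drop an extra boundedness hypothesis on $|\nabla f|$. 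Without it, your passage from ``$\Delta u\ge0$ pointwise on $X_{\mathrm{reg}}$'' to ``$u$ is weakly subharmonic and in $W^{1,2}_{\loc}(X)\cap L^2(X)$'' does not close. The rest of your argument (Theorem \ref{yau1} forcing $u\equiv c$, and $c=0$ from infinite volume) matches the paper's Step 7.
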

Here by a smooth pseudomanifold we mean a simplexwise smooth, pseudomanifold which is  smooth  outside of its singular set. That situation arises for instance when the space is a projective algebraic variety.  The difficulty in extending existing results lies in the lack of a differentiable structure on admissible polyhedron in general, and the loss of completeness outside the singular set even in the case of smooth pseudomanifolds. Moreover the classical notion of Laplace operator doesn't exist in the non-smooth setting. To circumvent this latter problem and following the work of \cite{G12}, we define the Laplacian of a subharmonic function  as a  measure for which the Green formula holds, see Theorem \ref{radon}. Furthermore we prove a Gaffney's Stokes theorem in this setting, see Theorem \ref{sub-har}.

 The following two theorems are corollaries of Theorems \ref{yau1} and \ref{pseudo} and are the generalizations of Shoen-Yau's theorem in the non-smooth setting.
\begin{thm}\label{rigid4}
Let $(X,g,\mu_g)$ be a complete, smooth $n$-pseudomanifold. Suppose $X$ has non-negative
$n$-Ricci curvature. Suppose $Y$ is a  Riemannian manifold of non-positive curvature, and $u:(X,g)\rightarrow (Y,h)$ a continuous harmonic map belonging to $W_{loc}^{1,2}(X, Y)$. If $u$ has finite energy and $e(u)$ is locally bounded, then it is a constant map.
\end{thm}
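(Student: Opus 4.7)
The plan is to apply Theorem \ref{pseudo} to the energy density $f := e(u)$, and then to use $E(u)<\infty$ together with the non-compactness of $X$ to conclude that $u$ is constant. Since $u$ is continuous harmonic with $e(u)$ locally bounded, interior regularity for harmonic maps into NPC targets (see \cite{F01}) gives that $u$ is locally Lipschitz on the smooth locus $X\setminus\Sigma$, so $e(u)$ is continuous and belongs to $W^{1,2}_{\loc}(X)$. On $X\setminus\Sigma$ the Bochner identity
$$\Delta e(u) = |\nabla du|^2 + \langle\ric^X(du),du\rangle - \sum_{i,j}\langle\R^Y(du(e_i),du(e_j))du(e_j),du(e_i)\rangle,$$
combined with $\ric^X\geq 0$ (from the smooth $CD(0,n)$ condition) and the non-positive sectional curvature of $Y$, gives $\Delta e(u)\geq |\nabla du|^2\geq 0$; since $\Sigma$ has codimension at least one and $e(u)$ is locally bounded on $X$, a cutoff argument extends the weak subharmonicity of $e(u)$ to all of $X$.

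The key step is verifying the global integrability of $\nabla e(u)$. With a Lipschitz cutoff $\eta$ supported in $B(x_0,2R)$ satisfying $\eta\equiv 1$ on $B(x_0,R)$ and $|\nabla\eta|\leq 2/R$, multiplying $\Delta e(u)\geq|\nabla du|^2$ by $\eta^2$, integrating by parts, and using $|\nabla e(u)|\leq |du|\,|\nabla du|$ together with Young's inequality yields
$$\int_X \eta^2|\nabla du|^2\,d\mu_g \leq C\int_X|\nabla\eta|^2|du|^2\,d\mu_g \leq \frac{C'}{R^2}E(u).$$
Letting $R\to\infty$ gives $\nabla du\in L^2(X)$, and then $\|\nabla e(u)\|_{L^1}\leq\|du\|_{L^2}\|\nabla du\|_{L^2}<\infty$ by Cauchy--Schwarz. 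To secure $\|\nabla e(u)\|_{L^2}<\infty$, I would combine the subharmonicity of $e(u)$, the bound $e(u)\in L^1(X)$ coming from finite energy, and a Moser-type mean value inequality for non-negative subharmonic functions on $CD(0,n)$ pseudomanifolds to promote the local boundedness of $e(u)$ to a global bound $e(u)\leq M$; then $|\nabla e(u)|^2\leq 2M|\nabla du|^2\in L^1(X)$.

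With the hypotheses of Theorem \ref{pseudo} verified, $e(u)$ equals a constant $c\geq 0$. Since $X$ is complete, non-compact, and satisfies $CD(0,n)$, standard Bishop--Gromov type volume growth forces $\mu_g(X)=\infty$, so $c\,\mu_g(X)=E(u)<\infty$ compels $c=0$. Hence $du\equiv 0$ and $u$ is constant. The main obstacles I foresee are the justification of Bochner's inequality weakly across the singular set $\Sigma$ — for which the smooth pseudomanifold hypothesis and the measure-theoretic Laplacian of Theorem \ref{radon} are essential — and the mean value inequality on $CD(0,n)$ pseudomanifolds needed to upgrade the local boundedness of $e(u)$ to a global one.
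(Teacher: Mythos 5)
Your opening moves (Bochner on $X\setminus S$, extension of weak subharmonicity across the polar singular set using local boundedness of $e(u)$, and the final step ``$e(u)$ constant $+$ finite energy $+$ infinite volume from Lemma \ref{bishop} $\Rightarrow e(u)\equiv 0$'') match the paper. But the route you take through Theorem \ref{pseudo} has a genuine gap and is also unnecessary. The gap: to invoke Theorem \ref{pseudo} you need $\|\nabla e(u)\|_{L^2}<\infty$, and for this you appeal to ``a Moser-type mean value inequality on $CD(0,n)$ pseudomanifolds'' to upgrade local boundedness of $e(u)$ to a \emph{global} bound. No such scale-invariant mean value inequality is available in the paper: the Poincar\'e inequality established for Riemannian polyhedra is only local (constants $C(p,r')$ depending on the scale $r'$), so the Moser iteration constants are not uniform as $R\to\infty$, and the global sup bound does not follow. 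The unnecessary part: your own Caccioppoli inequality $\int_X\eta^2|\nabla du|^2\,d\mu_g\le C' R^{-2}E(u)$ already finishes the proof — since $E(u)$ is a fixed finite number, letting $R\to\infty$ gives $\nabla du\equiv 0$, not merely $\nabla du\in L^2$, hence $\nabla e(u)\equiv 0$ and $e(u)$ is constant; everything after that sentence in your write-up is a detour. (That Caccioppoli step itself needs care: testing $\Delta e(u)\ge|\nabla du|^2$ against $\eta^2$ requires the Bochner inequality to hold weakly across $S$ with $|\nabla du|^2\in L^1_{\loc}$ up to $S$, which is more than the bare subharmonicity of $e(u)$ that the polar-set argument gives you.)

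The paper sidesteps both issues by working with $f_\epsilon=\sqrt{e(u)+\epsilon}$ instead of $e(u)$: Bochner gives $\Delta e(u)\ge|B(u)|^2$ and Cauchy--Schwarz gives $|\nabla e(u)|^2\le 2e(u)|B(u)|^2$, whence $f_\epsilon$ is weakly subharmonic on $X\setminus S$ and then on $X$. Applying the cutoff inequality from the proof of Theorem \ref{yau1} to $f_\epsilon$ (only subharmonicity and $W^{1,2}_{\loc}$ membership are needed, not an $L^2$ bound on $f_\epsilon$) yields
\begin{eqnarray*}
\int_{B_R'}\frac{|\nabla e(u)|^2}{4(e(u)+\epsilon)}\,d\mu_g\ \le\ \frac{1}{R^2}\int_{B_{2R}}\bigl(e(u)+\epsilon\bigr)\,d\mu_g ,
\end{eqnarray*}
and letting $\epsilon\to 0$ and $R\to\infty$ with $E(u)<\infty$ forces $\nabla e(u)=0$ off the zero set of $e(u)$, so $e(u)$ is constant; Lemma \ref{bishop} then gives $e(u)\equiv 0$. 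This avoids any global bound on $e(u)$, any $W^{2,2}$-type integrability of $\nabla du$ near $S$, and any appeal to Theorem \ref{pseudo}. I recommend you either adopt the $\sqrt{e(u)+\epsilon}$ device or, if you keep your version, delete the Theorem \ref{pseudo} detour and instead carefully justify the integration by parts of the Bochner inequality across $S$, then conclude directly from $R\to\infty$.
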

\begin{thm}\label{rigid5}
Let $(X,g,\mu_g)$ be a complete, smooth $n$-pseudomanifold. Suppose $X$ has non-negative $n$-Ricci curvature $\CD(0,n)$. Let $Y$ be a simply connected, complete geodesic space of non-positive curvature and $u:(X,g)\rightarrow Y$ a continuous harmonic map with finite energy, belonging to $W^{1,2}_{\loc}(X, Y)$. If  $\int_M\sqrt{e(u)}~d\mu_g<\infty$, then $u$ is a constant map.
\end{thm}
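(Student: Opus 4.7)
The plan is to reduce Theorem \ref{rigid5} to the subharmonic Liouville statement of Theorem \ref{pseudo} by composing $u$ with the distance to a fixed basepoint of $Y$. Since $Y$ is simply connected, complete, and of non-positive curvature in the sense of Alexandrov, it is a Hadamard (CAT(0)) space. In particular, for every $Q\in Y$ the function $\phi_Q(y):=d_Y(y,Q)$ is convex and $1$-Lipschitz on $Y$.

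Fix $Q\in Y$ and set $f:=\phi_Q\circ u$. I would verify in turn that $f$ satisfies every hypothesis of Theorem \ref{pseudo}. Continuity of $f$ is immediate from that of $u$. Because $\phi_Q$ is $1$-Lipschitz and $u\in W^{1,2}_{\loc}(X,Y)$, the Eells--Fuglede chain rule on admissible Riemannian polyhedra places $f$ in $W^{1,2}_{\loc}(X)$ and yields the pointwise bound $|\nabla f|(x)\le \sqrt{e(u)(x)}$ for $\mu_g$-a.e.\ $x\in X$. Integrating, $\|\nabla f\|_{L^1}\le \int_X \sqrt{e(u)}\,d\mu_g<\infty$ by assumption, and $\|\nabla f\|_{L^2}^{2}\le \int_X e(u)\,d\mu_g=E(u)<\infty$ by finiteness of the energy. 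It remains to check that $f$ is weakly subharmonic. This is the Jensen-type property of harmonic maps into NPC targets: since $\phi_Q$ is convex and Lipschitz and $u$ is harmonic into an NPC space, $\phi_Q\circ u$ is weakly subharmonic on $X$, which in the polyhedral setting is part of the Eells--Fuglede/Korevaar--Schoen framework described in Subsection \ref{HMORM}.

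Theorem \ref{pseudo} then forces $f$ to be a constant $c_Q\ge 0$, that is, $d_Y(u(x),Q)\equiv c_Q$ for every $x\in X$. Choosing $Q:=u(x_0)$ for some fixed $x_0\in X$ gives $c_Q=d_Y(u(x_0),u(x_0))=0$, so $d_Y(u(x),u(x_0))=0$ for every $x$, and $u$ is constant.

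The step I expect to be the main obstacle is the verification, in the polyhedral setting, of the two structural facts invoked above: (i) the Lipschitz chain rule giving the pointwise bound $|\nabla(\phi_Q\circ u)|\le \lip(\phi_Q)\sqrt{e(u)}$, and (ii) the weak subharmonicity of $\phi_Q\circ u$ when $\phi_Q$ is convex Lipschitz and $Y$ is NPC. Both are polyhedral analogues of well-known smooth facts, but they rely on the precise behavior of the Korevaar--Schoen type energy density and on the characterization of harmonic maps into NPC spaces via convex post-composition. Once these ingredients are cited from the framework already used in the paper, the reduction to Theorem \ref{pseudo} and the final test-function choice $Q=u(x_0)$ are elementary.
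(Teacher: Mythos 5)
Your proposal is correct and follows essentially the same route as the paper: the paper's proof applies Lemma \ref{distance} (i.e.\ Lemma 10.2 of \cite{F01}) to get weak subharmonicity of $v(x)=d_Y(u(x),u(x_0))$, uses the bound $|\nabla v|^2\le c\,e(u)$ to verify the $L^1$ and $L^2$ gradient hypotheses, and then invokes the Liouville theorem for subharmonic functions (Theorem \ref{pseudo}; the paper's citation of Theorem \ref{yau1} at that point appears to be a misprint, since $v$ need not be in $L^2(X)$) to conclude $v\equiv 0$. Your only cosmetic difference is fixing a general basepoint $Q$ first and specializing to $Q=u(x_0)$ afterwards, which changes nothing.
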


Our second objective in this paper is the study of  $2$-parabolic admissible polyhedra. We say a connected domain $\Omega$ in an  admissible Riemannian polyhedron is $2$-parabolic, if for every compact set in $\Omega$, its relative capacity with respect to $\Omega$ is zero. Our main theorem is
\begin{thm}\label{para-har}
Let $X$ be $2$-parabolic pseudomanifold. Let $f$ in $W^{1,2}_{\loc}(X)$ be a continuous, weakly subharmonic function  such that $\|\nabla f\|_{L^1}$ and $\|\nabla f\|_{L^2}$ are finite. Then $f$ is constant.
\end{thm}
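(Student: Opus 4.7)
The plan is to move from subharmonicity to harmonicity first, and then from harmonicity to constancy, in both steps exploiting $2$-parabolicity to supply cutoff functions of vanishing Dirichlet energy.

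By $2$-parabolicity I fix an exhaustion of $X$ by compact sets $K_n \nearrow X$ and corresponding cutoffs $\varphi_n \in W^{1,2}_c(X)$ with $0 \leq \varphi_n \leq 1$, $\varphi_n \equiv 1$ on $K_n$, and $\|\nabla \varphi_n\|_{L^2} \to 0$. For the first step, Theorem \ref{radon} provides a non-negative Radon measure $\Delta f$ obeying Green's identity on compactly supported $W^{1,2}$ test functions, so choosing $\psi = \varphi_n^2$ gives
$$0 \leq \int_X \varphi_n^2 \, d(\Delta f) = -2 \int_X \varphi_n \, \nabla \varphi_n \cdot \nabla f \, d\mu_g \leq 2 \|\nabla \varphi_n\|_{L^2} \|\nabla f\|_{L^2},$$
and the right-hand side vanishes in the limit. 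Since $\varphi_n^2 \geq \chi_{K_n}$ and $K_n \nearrow X$, monotone convergence forces $\Delta f(X) = 0$, so $\Delta f \equiv 0$ and $f$ is harmonic.

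For the second step I test the weak harmonicity against $\psi = \varphi_n^2\, T_k(f)$, where $T_k(s) = \max(-k, \min(s,k))$ is the symmetric truncation at height $k$. Compact support of $\varphi_n$ and the continuity (hence local boundedness) of $f$ place $\psi$ in $W^{1,2}_c(X)$; the chain rule in $W^{1,2}_{\loc}$ gives $\nabla T_k(f) = \chi_{\{|f|<k\}} \nabla f$, so that $\nabla f \cdot \nabla T_k(f) = |\nabla T_k(f)|^2$ almost everywhere. Expanding $0 = \int_X \nabla f \cdot \nabla \psi \, d\mu_g$ and rearranging gives
$$\int_X \varphi_n^2 |\nabla T_k(f)|^2 \, d\mu_g = -2 \int_X \varphi_n\, T_k(f)\, \nabla \varphi_n \cdot \nabla f \, d\mu_g,$$
whose right-hand side is bounded in absolute value by $2k \|\nabla \varphi_n\|_{L^2} \|\nabla f\|_{L^2} \to 0$. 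Using $\varphi_n^2 \geq \chi_{K_n}$ and passing to the limit, $\nabla T_k(f) = 0$ almost everywhere, hence $T_k \circ f$ is constant on the connected polyhedron $X$. Picking any base point $x_0$ and any $k > |f(x_0)|$ identifies this constant as $f(x_0)$, and continuity of $f$ then forces $f \equiv f(x_0)$.

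The main obstacle is not conceptual but one of verifying that standard first-order calculus (Leibniz, the truncation chain rule, and the Green identity with measure-valued Laplacian) applies on the admissible pseudomanifold $X$ to the specific test functions above. Theorem \ref{radon} together with the $W^{1,2}$ framework on Riemannian polyhedra developed in \cite{F01,KS93} and recalled earlier in the paper should legitimize all these manipulations, so that the only genuinely new input is the exhausting sequence of cutoffs afforded by the $2$-parabolicity hypothesis.
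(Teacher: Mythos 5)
Your argument is correct, and it splits the same way the paper does: first subharmonic $\Rightarrow$ harmonic, then harmonic with finite Dirichlet energy $\Rightarrow$ constant. Your first step is essentially identical to the paper's Proposition \ref{para-har2} (apply Theorem \ref{radon} to get the non-negative measure $\textbf{m}_f$, test against the parabolic cutoffs from Lemma \ref{approx}, conclude $\textbf{m}_f=0$); whether one uses $\varphi_n$ or $\varphi_n^2$ as the test function is immaterial. Your second step, however, is genuinely different from the paper's. The paper routes through Proposition \ref{holop2}: it solves Dirichlet problems for the truncations $f_i=\max(-i,\min(i,f))$ on an exhaustion by regular domains, invokes H\"older continuity and local equicontinuity of the harmonic replacements (Theorems $6.2$ and $6.3$ of \cite{F01}), extracts a locally uniform limit which is a bounded harmonic function and hence constant by Proposition \ref{holop1}, and finally deduces $f_i\in L_0^{1,2}$ so that $\int_X|\nabla f|^2=\lim_i\int_X\langle\nabla f,\nabla f_i\rangle=0$. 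Your Caccioppoli-type computation with $\psi=\varphi_n^2\,T_k(f)$ reaches the same conclusion while bypassing all of that machinery: it needs only the extension of the weak harmonicity identity to bounded, compactly supported $W^{1,2}_0$ test functions (density of $\lip_c$), the truncation chain rule $\nabla T_k(f)=\chi_{\{|f|<k\}}\nabla f$ (valid simplexwise, and already used implicitly by the paper when it forms $f_i$), and the fact, used in the proof of Theorem \ref{yau1}, that a $W^{1,2}_{\loc}$ function with vanishing gradient on an admissible polyhedron is constant. Your argument in fact proves Proposition \ref{holop2} itself, since it never uses continuity of $f$ in an essential way. Two cosmetic points: the phrase ``monotone convergence'' is not quite right in either step, since the upper bounds $2k\|\nabla\varphi_n\|_{L^2}\|\nabla f\|_{L^2}$ are not monotone in $n$; the correct passage is to fix $K_m$, bound $\int_{K_m}$ by $\int\varphi_n^2(\cdots)$ for all $n\geq m$, and let $n\to\infty$ to get zero on each $K_m$. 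And the identification of the constant at the end is cleanest by noting that $T_k(f)\equiv c_k$ with $|c_k|<k$ forces $f\equiv c_k$, which is exactly what you get by choosing $k>|f(x_0)|$.
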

Just as in the case of complete pseudomanifolds
\begin{thm}\label{rigid6}
Let $(X,g,\mu_g)$ be a $2$-parabolic pseudomanifold with $g$ simplexwise smooth. Let $Y$ be a simply connected complete geodesic space of non-positive curvature and $u:(X,g)\rightarrow Y$ a continuous harmonic map with finite energy belonging to $ W^{1,2}_{\loc}(X, Y)$. If we have $\int_X\sqrt{e(u)}d\mu_g<\infty$, then $u$ is a constant map.
\end{thm}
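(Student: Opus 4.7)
\medskip
\noindent\textbf{Proof proposal for Theorem \ref{rigid6}.}

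The plan is to reduce the statement to the scalar Liouville theorem \ref{para-har} by composing $u$ with distance functions on $Y$, in complete analogy with the route from Theorem \ref{pseudo} to Theorem \ref{rigid5}. Since $Y$ is a simply connected, complete, non-positively curved geodesic space, it is a Hadamard space, so for any fixed $y_{0}\in Y$ the distance function $d_{Y}(\,\cdot\,,y_{0})$ is $1$-Lipschitz and convex. A standard principle for harmonic maps into NPC targets (as developed in Eells--Fuglede \cite{F01} and already invoked for Theorem \ref{rigid5}) says that the post-composition of a (finite-energy, continuous) harmonic map with a convex Lipschitz function is weakly subharmonic. Applied here this yields the scalar function
\[
f(x):=d_{Y}\bigl(u(x),y_{0}\bigr),
\]
which is continuous on $X$, weakly subharmonic, and belongs to $W^{1,2}_{\loc}(X)$.

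Next I would verify the two integrability hypotheses of Theorem \ref{para-har}. Because $d_Y(\cdot,y_0)$ is $1$-Lipschitz, the chain rule / Lipschitz-composition estimate valid for Sobolev maps into metric spaces gives the pointwise bound
\[
|\nabla f|(x)\;\le\;\sqrt{e(u)(x)}\qquad\mu_{g}\text{-a.e.}
\]
Integrating, the assumption $\int_{X}\sqrt{e(u)}\,d\mu_{g}<\infty$ gives $\|\nabla f\|_{L^{1}}<\infty$, while the assumption that $u$ has finite energy gives
\[
\int_{X}|\nabla f|^{2}\,d\mu_{g}\;\le\;\int_{X}e(u)\,d\mu_{g}\;<\;\infty,
\]
so $\|\nabla f\|_{L^{2}}<\infty$. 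All hypotheses of Theorem \ref{para-har} are thus satisfied and we conclude that $f$ is constant on $X$.

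To finish, I would pick any basepoint $x_{0}\in X$ and specialize $y_{0}:=u(x_{0})$. Then $f(x_{0})=0$, and since $f$ is constant, $d_{Y}(u(x),u(x_{0}))\equiv 0$, i.e.\ $u\equiv u(x_{0})$. This shows $u$ is a constant map.

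The main obstacle is the first step: justifying, in the polyhedral setting, that $d_{Y}\!\circ u$ is weakly subharmonic (as a measure, in the sense of Theorem \ref{radon}) and lies in $W^{1,2}_{\loc}(X)$ with the stated pointwise gradient bound. On smooth manifolds this follows from a direct Bochner-type computation, but here one must work with the energy-density / Sobolev-map formalism of \cite{KS93,F01} on admissible Riemannian polyhedra and appeal to the convexity of $d_{Y}(\cdot,y_{0})$ on the Hadamard target $Y$. Once this composition lemma is in hand --- it is used identically in the proof of Theorem \ref{rigid5} --- the rest of the argument is a direct application of the $2$-parabolic scalar result and does not require any further use of the geometry of $X$ or $Y$.
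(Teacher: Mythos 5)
Your proposal is correct and follows essentially the same route as the paper: the paper proves Theorem \ref{rigid6} as a direct corollary of Lemma \ref{distance} (the Eells--Fuglede composition lemma giving weak subharmonicity of $d_Y(u(\cdot),q)$, which is exactly the ``main obstacle'' you identify, and which the paper imports from \cite{F01}) together with the gradient bound $|\nabla f|^2\leq c\,e(u)$ and the scalar Liouville theorem \ref{para-har} for $2$-parabolic pseudomanifolds. The only cosmetic difference is that you state the Lipschitz-composition bound without the constant $c$, which does not affect the finiteness of the $L^1$ and $L^2$ norms.
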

 Also we will obtain
\begin{thm}\label{rigid7}
Let $(X,g,\mu_g)$ be a $2$-parabolic admissible Riemannian polyhedron with simplexwise smooth metric $g$. Let $Y$ be a complete geodesic space of non-positive curvature and $u:(X,g)\rightarrow Y$ a continuous harmonic map  belonging to $ W^{1,2}_{\loc}(X, Y)$, with bounded image. Then $u$ is a constant map.
\end{thm}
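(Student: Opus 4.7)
The plan is to compose $u$ with a convex squared-distance on $Y$ to produce a bounded, weakly subharmonic function on $X$, and then to run a $2$-parabolic cutoff argument that forces it to be constant.

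Fix $y_0\in Y$ and set $f(x):=d_Y(u(x),y_0)^2$. Since $u$ has bounded image, one may choose $y_0$ so that $u(X)$ lies in a convex normal ball on which $d_Y(\cdot,y_0)^2$ is a globally convex Lipschitz function (passing to the universal cover $\tilde Y$, which is CAT$(0)$, in the merely locally NPC case). Then $f$ is bounded and continuous, and the Eells--Fuglede chain rule for maps in $W^{1,2}_{\loc}(X,Y)$ gives $f\in W^{1,2}_{\loc}(X)$. Because the composition of a harmonic map into an NPC space with a convex function is weakly subharmonic (the Korevaar--Schoen first variation argument in the polyhedral form worked out by Eells--Fuglede), $f$ satisfies $\int_X\nabla f\cdot\nabla\psi\,d\mu_g\le 0$ for every non-negative $\psi\in W^{1,2}_0(X)$.

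Next I exploit $2$-parabolicity. Fix an exhaustion $K_1\Subset K_2\Subset\cdots\Subset X$; by $2$-parabolicity there exist cutoffs $\eta_n\in W^{1,2}(X)$ of compact support with $0\le\eta_n\le 1$, $\eta_n\equiv 1$ on $K_n$, and $\int_X|\nabla\eta_n|^2\,d\mu_g\to 0$. Replacing $f$ by $f-\inf_X f\ge 0$ (which preserves boundedness and weak subharmonicity) and testing the subharmonicity inequality against $\psi_n:=\eta_n^2 f\in W^{1,2}_0(X)$ yields
\[
\int_X\eta_n^2|\nabla f|^2\,d\mu_g\le -2\int_X\eta_n\, f\,\nabla f\cdot\nabla\eta_n\,d\mu_g.
\]
Setting $C:=\sup_X f-\inf_X f<\infty$, Cauchy--Schwarz and absorption give
\[
\int_X\eta_n^2|\nabla f|^2\,d\mu_g\le 4C^2\int_X|\nabla\eta_n|^2\,d\mu_g\longrightarrow 0.
\]
Since $\eta_n\equiv 1$ on $K_n\nearrow X$, monotone convergence forces $|\nabla f|\equiv 0$, so $f$ is constant on the connected polyhedron $X$.

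Finally, varying $y_0$, the function $x\mapsto d_Y(u(x),y_0)$ is constant on $X$ for every admissible $y_0$. Choosing $y_0=u(x_0)$ for a fixed $x_0\in X$ gives $d_Y(u(x),u(x_0))=0$ for all $x$, so $u\equiv u(x_0)$. The main technical obstacle is the first step: justifying, in the present $W^{1,2}_{\loc}$ polyhedral setting, the convex-composition lemma that delivers the weak subharmonicity of $f$---and, when $Y$ is merely locally NPC rather than globally CAT$(0)$, arranging via a convex normal neighborhood or a lift to the universal cover that $d_Y(\cdot,y_0)^2$ is genuinely convex along $u(X)$. Once that is in place, the rest is a capacity-cutoff argument in the same spirit as the proofs of Theorems \ref{yau1} and \ref{para-har}.
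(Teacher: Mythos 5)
Your argument is correct in substance, but its second half takes a genuinely different route from the paper. The paper dispatches Theorem \ref{rigid7} in one line: by Lemma \ref{distance} the function $v=d_Y(u(\cdot),q)$ is continuous, weakly subharmonic, lies in $W^{1,2}_{\loc}(X)$, and is bounded because the image of $u$ is bounded; hence $\sup_X v+1-v$ is a positive continuous superharmonic function, which Proposition \ref{holop1} forces to be constant, and taking $q=u(x_0)$ finishes. You instead compose with the squared distance and then run a direct Caccioppoli absorption against the $2$-parabolic cutoffs of Lemma \ref{approx}; your estimate $\int_X\eta_n^2|\nabla f|^2\,d\mu_g\le 4C^2\int_X|\nabla\eta_n|^2\,d\mu_g\to 0$ is correct and exploits exactly the boundedness of $f$ where the paper exploits positivity of $\sup v-v$. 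What your route buys is that it bypasses Proposition \ref{holop1} entirely, and with it the exhaustion by regular domains, the harmonic functions $u_i$ and the comparison principle on which that proposition rests; all you need is the cutoff characterization of parabolicity, so the argument is more self-contained and closer in spirit to the proofs of Theorems \ref{yau1} and \ref{para-har}. Two points to tidy up. First, weak subharmonicity in the paper is tested against (non-negative) $\rho\in\lip_c(X)$, whereas your $\psi_n=\eta_n^2 f$ is only a bounded, compactly supported element of $W^{1,2}_0(X)$; a routine density argument, using $f\in W^{1,2}_{\loc}(X)$ and the compactness of the supports, closes this. Second, you could work with $v=d_Y(u(\cdot),y_0)$ itself rather than its square --- it is equally convex on a CAT$(0)$ space and lets you quote Lemma \ref{distance} verbatim instead of re-deriving a convex-composition lemma for $d_Y^2$. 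The difficulty you flag about $Y$ being only (locally) non-positively curved while Lemma \ref{distance} assumes a simply connected target is genuine, and your proposed remedy via a lift to the universal cover or a convex neighborhood is the standard one; the paper itself passes over exactly this point in silence, so your explicit acknowledgement is if anything more careful than the original.
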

In order to prove  Theorem  \ref{para-har}, we need to generalize some of the results in \cite{H90} in our setting. This is done in Section \ref{parabolic} in Propositions \ref{holop1}, \ref{holop2} and \ref{para-har2}. The proof of  Propositions  \ref{holop1} and  \ref{holop2} follow a similar pattern  as their equivalents for Riemannian manifolds. They are based on the fact that admissible Riemannian polyhedra admit exhaustions by regular domains and the validity of comparison principle on admissible Riemannian  polyhedra. The main new ingredient in the proof of Theorem \ref{para-har} is Proposition \ref{para-har2}.
%In particular we will need following propositions.
% \begin{prop}\label{holop1}
%Let $(X,g,\mu_g)$ be $2$-parabolic admissible Riemannian polyhedron. Suppose $f$ in $W^{1,2}_{\loc}(X)$ is a positive, continuous superharmonic function on $X$. Then $f$ is constant.
%\end{prop}
%\begin{prop}\label{holop2}
%Let $(X,g,\mu_g)$ be $2$-parabolic admissible Riemannian polyhedron.  Let $f$ in $W^{1,2}_{\loc}(X)$ be a harmonic function such that $\|\nabla f\|_{L^2}$ is finite. Then $f$ is constant.
%\end{prop}
%The proofs of the propositions  above follow a similar pattern  as their equivalents for Riemannian manifolds. They are based on the fact that admissible Riemannian polyhedra admits an  exhaustion by regular domains, and the validity of comparison principle on admissible Riemannian  polyhedra. The main new ingredient in the proof of Theorem \ref{para-har} is
%\begin{prop}\label{para-har2}
%Let $(X,g,\mu_g)$ be a $2$-parabolic pseudomanifold. Let $f$ in $W^{1,2}_{\loc}(X)$ be a continuous, weakly subharmonic function, such that $\|\nabla f\|_{L^1}$ and $\|\nabla f\|_{L^2}$ are finite. Then $f$ is harmonic.
%\end{prop}

  The rest of this paper is organized as follows. In  Section \ref{prelim}, we give a complete background on Riemannian polyhedra and analysis on them. Most definitions and results have been taken directly from \cite{F01}. In Subsection \ref{sobolev}, we compare the $L^2$ based Sobolev space on admissible Riemannian polyhedra  as in \cite{F01} with the one in \cite{C99}, and  show that they are equivalent. As we could not find references in the literature we provide a rather detailed explanation of this fact. In Section \ref{Ricci}, we discuss the definition of two notions of Ricci curvature, the  curvature dimension condition $\CD(K,N)$ and the measure contraction property $\MCP(K,N)$ on metric measure spaces. We show that both notions are applicable to Riemannian polyhedra. In Proposition \ref{bishop} we  show that any non-compact complete $n$-dimensional Riemannian polyhedron of non-negative Ricci curvature has infinite volume. Subsection \ref{complete1} is devoted to Theorems \ref{yau1}, \ref{sub-har}, \ref{pseudo} and Subsection \ref{vanishing} to Theorems \ref{rigid4}, and \ref{rigid5}. In Section \ref{parabolic} we show that as in the smooth case the ``approximation by unity'' property holds on admissible $2$-parabolic polyhedra, see Lemma \ref{approx}.  Moreover, we prove that removing the singular set of a $2$-parabolic pseudomanifold yields  a $2$-parabolic manifold (Lemma \ref{sing-para}). The rest of this Section is the detailed proof of Theorem \ref{para-har} and its corollaries.
%%%%%%%%%%%%%%%%%%%%%%%%%%%%%%%%%%%%%%%%%%%%%%%%%%%%%%%%%%%%%%%%%%%%%%%%%%%%%%%%%%%%%%%%%%%%%%%%%%%%%%%%%%%%%%%%%%%%%%%%%%%%%%%%%%%%%%%%%
%%%%%%%%%%%%%%%%%%%%%%%%%%%%%%%%%%%%%%%%%%%%%%%%%%%%%%%%%%%%%%%%%%%%%%%%%%%%%%%%%%%%%%%%%%%%%%%%%%%%%%%%%%%%%%%%%%%%%%%%%%%%%%%%%%%%%%%%
\section*{Acknowledgement}
The original version of this work was part of my Ph.D. dissertation. I thank my advisor Professor Marc Troyanov for his guidance and support. I also thank my Ph.D. exam jury members Professors Buser, Wenger and particularly Professor Naber who suggested an additional bound on $|\nabla f|$ as hypothesis in Theorem \ref{pseudo},
and so automatically on $e(u)$ in  Theorems \ref{rigid4} and \ref{rigid5}. I am very grateful to thank Professor Goufang Wei who I met as a postdoc in MSRI, for suggesting that I read papers by \cite{HKX13,J13,K13} which allowed me to remove this assumption in Theorems \ref{pseudo}, \ref{rigid5}.\\
\section{Preliminaries}\label{prelim}
\subsection{Riemannian polyhedra}\label{Riempoly}
In this subsection we recall the definitions and results about Riemannian polyhedra which will be used in the rest of the manuscript. We refer the reader to the book \cite{F01} and the references therein, for more complete discussion on the subject.
\subsubsection*{\textbf{Simplicial complex.} }A countable locally finite simplicial complex $K$,
consists of a countable set $\{v\}$ of elements called vertices and a set $\{s\}$ of
finite non-void subsets of vertices called simplexes such that
\begin{enumerate}[i.]
  \item any set consisting of exactly one vertex is a simplex.
  \item any non-void subset of a simplex is a simplex.
  \item every vertex belongs to only finitely many simplexes (the local finiteness
condition).
\end{enumerate}

To the simplicial complex $K$, we associate a metric space $|K|$ defined as follows. The space $|K|$ of $K$ is the set of all formal finite linear combinations
$\alpha=\sum_{v\in K} \alpha(v)v$ of vertices of $K$ such that $0\leq\alpha(v)\leq 1$, $\sum_{v\in K} \alpha(v)=1$
and $\{v : \alpha(v) > 0\}$ is a simplex of $K$. $|K|$ is made into a metric space with barycentric distance $\rho(\alpha, \beta)$ between
two points $\alpha=\sum \alpha(v)v$ and $\beta=\sum \beta(v)v$ of $|K|$ given by the finite sum
\begin{eqnarray*}
\rho(\alpha, \beta) = \left(\sum _{v\in K}(\alpha(v) -\beta(v))^2\right)^{\frac{1}{2}}.
\end{eqnarray*}
With this metric $|K|$ is locally compact and  separable. The metric $\rho$ is not intrinsic. We denote by $\overline{\rho}(\alpha,\beta)$ the length metric associated to $\rho$ by the standard procedure \cite{BBI01}.
\begin{lemme}\label{embedding}\cite{F01}
Let $K$ be a countable, locally finite simplicial complex of finite
dimension $n$ and $V$ a Euclidean space of dimension $2n + 1$. There exists an
affine Lipschitz homeomorphism $i$ of $|K|$ onto a closed subset of $V$.
\end{lemme}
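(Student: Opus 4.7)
The plan is the classical general-position embedding argument due in essence to Menger: place the countably many vertex images in $V=\mathbb{R}^{2n+1}$ in ``general position'' and then extend affinely (i.e., by barycentric coordinates) over each simplex. The dimension count $\dim V = 2n+1$ is what makes this work, because the union of any two simplexes of $K$ involves at most $2(n+1)=2n+2$ vertices, and $2n+2$ points of $V$ can be placed in affinely independent position.

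Concretely, I would enumerate the vertices of $K$ as $v_1,v_2,\ldots$ and choose $p_1,p_2,\ldots\in V$ inductively so that every $(2n+2)$-element subfamily is affinely independent. At step $k$, the ``forbidden'' set is the union of affine spans of $(2n+1)$-element subsets of $\{p_1,\ldots,p_{k-1}\}$, a finite union of proper affine subspaces (each of dimension $\leq 2n<\dim V$), hence of empty interior; thus $p_k$ can be chosen in any prescribed open set, in particular within the unit ball of $V$. Define $i\colon |K|\to V$ by the barycentric extension
\[
i\Bigl(\sum_{v\in K}\alpha(v)v\Bigr)=\sum_{v\in K}\alpha(v)\,p_v,
\]
which is a finite sum since each $\alpha$ has support in a single simplex. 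On each simplex $s$, affine independence of the vertex images makes $i|_s$ an affine bijection onto a non-degenerate Euclidean simplex. For injectivity on all of $|K|$, suppose $i(\alpha)=i(\beta)$ with $\alpha,\beta$ supported in simplexes $s_\alpha,s_\beta$. Then $c_v:=\alpha(v)-\beta(v)$ is supported on at most $2n+2$ vertices and satisfies $\sum c_v=0$ and $\sum c_v p_v=0$; by the general-position property all $c_v$ vanish. The Lipschitz estimate follows from Cauchy--Schwarz applied to the same at-most-$(2n+2)$-term sum: with $M=\sup_v\|p_v\|\leq 1$,
\[
\|i(\alpha)-i(\beta)\|\leq M\sum_v|\alpha(v)-\beta(v)|\leq M\sqrt{2n+2}\;\rho(\alpha,\beta).
\]
Continuity plus injectivity, combined with local finiteness (each point of $|K|$ has a compact neighborhood which is a finite union of simplexes, on which $i$ is a continuous bijection between compact Hausdorff spaces), upgrades $i$ to a homeomorphism onto its image.

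The main obstacle, and the place where some care is needed, is verifying that $i(|K|)$ is a \emph{closed} subset of $V$. If the $p_v$ are simply packed into the unit ball, their accumulation points in $V$ can easily create spurious limits of $i(|K|)$ that are not themselves in the image; the combinatorial distance in $K$ and the Euclidean distance in $V$ must therefore be coordinated. The fix is to exhaust $|K|$ by an increasing sequence of finite subcomplexes $|K_1|\subset|K_2|\subset\cdots$ (possible by countability and local finiteness) and to arrange the inductive choice of the $p_v$ so that vertex images corresponding to vertices outside $|K_j|$ are confined to a prescribed shrinking neighborhood of a fixed point (or conversely, pushed to the boundary of a designated compact piece). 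With such a choice, a convergent sequence $i(\alpha_k)\to q$ in $V$ forces the $\alpha_k$ to lie eventually in some $|K_j|$; compactness of $|K_j|$ then yields a subsequential limit $\alpha\in|K_j|$, and continuity of $i$ gives $i(\alpha)=q$, proving closedness. This coordination between the combinatorial exhaustion and the metric placement of the $p_v$ is the step I expect to require the most care.
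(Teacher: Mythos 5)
The paper offers no proof of this lemma; it is quoted verbatim from \cite{F01}, so your argument has to stand on its own. The first half does: the inductive general-position choice of the $p_v$ (avoiding a finite union of proper affine subspaces at each step), the barycentric extension, the injectivity argument from $\sum_v c_v=0$ and $\sum_v c_v p_v=0$ with at most $2n+2$ nonzero coefficients, and the Cauchy--Schwarz bound $\sum_v|c_v|\leq\sqrt{2n+2}\,\rho(\alpha,\beta)$ are all correct and are the standard route.

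The gap is the closedness/homeomorphism step, and it is not merely a step "requiring care": it cannot be repaired while the $p_v$ stay in a bounded set. Since $\alpha$ and $\beta$ are probability vectors, $\rho(\alpha,\beta)\leq\sqrt{2}$ always, so a globally Lipschitz map (which is what your unit-ball placement buys you) has bounded image; a closed bounded subset of $V$ is compact, and a compact set cannot be homeomorphic to $|K|$ when $K$ is infinite, because the vertices form an infinite set that is pairwise $\sqrt{2}$-separated, hence not totally bounded. Concretely, with all $p_v$ in the unit ball they accumulate at some $q\in V$; taking distinct vertices $v_k$ with $p_{v_k}\to q$, the sequence $(v_k)$ has no convergent subsequence in $|K|$, so either $q\notin i(|K|)$ (image not closed) or $i^{-1}$ is discontinuous at $q$. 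Your proposed fix --- confining the tail vertices to a shrinking neighborhood of a fixed point --- manufactures exactly such a bad accumulation point, and your claim that $i(\alpha_k)\to q$ forces the $\alpha_k$ eventually into one $|K_j|$ is then false (take $\alpha_k=v_k$). The correct construction goes the other way: choose the $p_v$ in general position \emph{and} forming a closed discrete subset of $V$ (so $\|p_v\|\to\infty$ along any enumeration when $K$ is infinite). Local finiteness then makes the piecewise-affine map proper --- each ball in $V$ meets the images of only finitely many simplexes --- and a proper continuous injection into a Hausdorff space is a closed map, hence a homeomorphism onto a closed subset. The price is that the map is then Lipschitz only on each simplex, equivalently locally, or globally with respect to the length metric $\bar\rho$ after additionally bounding the diameters of the images of the simplexes; that local reading of ``Lipschitz'' is the only one under which the statement can hold for non-compact $|K|$, and it is the one actually used later in the paper.
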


We shall use the term {\it{polyhedron}} to mean a connected locally compact separable
Hausdorff space $X$ for which there exists a simplicial complex $K$ and
a homeomorphism $\theta$ of $|K|$ onto $X$. Any such pair $T = (K,\theta)$ is called a
triangulation of $X$.

 A {\it{Lipschitz polyhedron}} is a metric space $X$ which is the image of the metric space $|K|$
of some complex $K$ under a Lipschitz homeomorphism $\theta : |K| \rightarrow  X$. The pair
$(K, \theta)$ is then called a Lipscitz triangulation (or briefly a triangulation) of the
Lipschitz polyhedron $X$. A null set in a Lipschitz polyhedron $X$ is understood a set $Z \subset X$ such
that $Z$ meets every maximal simplex $s$ (relative to some, and hence any
triangulation $T = (K, \theta)$ of $X$) in a set whose preimage under $\theta$ has $p$-dimensional
Lebesgue measure $0$, $p = \dime s$.

 From Lemma \ref{embedding} follows that every Lipschitz polyhedron $(X,d_X)$
can be mapped Lipschitz homeomorphically and (simplexwise) affinely onto a
 closed subset of a Euclidean space.
\subsubsection*{\textbf{Riemannian Structure on a polyhedron.}} The class of domains that we consider for our harmonic maps are Riemannian polyhedra. A Riemannian polyhedron is a Lipschitz polyhedron  $(X,d)$ such that for some triangulation $T = (K, \theta)$, there exist a measurable Riemannian metric $g^s=g_{ij}dx^idx^j$ on each maximal simplex $s$ of $i(|K|)$ ($i$ as in  Lemma \ref{embedding}), which satisfies
\begin{eqnarray}\label{elliptic}
\Lambda^{-2} \|\xi\|^2 \leq g_{ij}(x)\xi^i\xi^j \leq \Lambda^{2} \|\xi\|^2
\end{eqnarray}
almost everywhere in standard coordinate in the simplex $s$. Here the constant $\Lambda$ is independent of a given simplex.
The distance $d^g_X$ on $X$ is an intrinsic distance with respect to the metric $g$, meaning that $d^g=d_X^g$ is the infimal length of admissible path joining $x$ to $y$. Actually $(X,d^g)$ is a length space. The detailed definition is somewhat subtle and we refer to \cite{F01}, for a careful discussion of Riemannian polyhedra.

A Riemannian metric $g$ on a polyhedron $X$ is said to be continuous, if
relative to some (hence any) triangulation, $g_s$ is continuous up to the boundary
on each maximal simplex $s$ and for any two maximal simplexes $s$ and $s'$
sharing a face $t$, $g_s$ and $g_{s'}$ induce the same Riemannian metric on $t$. There
is a similar notion of a Lipschitz continuous Riemannian metric.

A Riemannian polyhedron has a well defined volume element given simplexwise by
\begin{eqnarray*}
d\mu_g=\sqrt{\det(g_{ij}(x))}~dx_1dx_2\ldots dx_n,
\end{eqnarray*}
this measure coincide with Hausdorff measure.
\subsubsection*{\textbf{Further definitions.}}
 A polyhedron $X$ will be called {\it{admissible}} if in some (hence in any) triangulation
\begin{enumerate}[i.]
\item $X$ is dimensionally homogeneous, i.e. all maximal simplexes have the
same dimension $n(= \dim X)$, or equivalently every simplex is a face of
some $n$-simplex.
\item $X$ is locally $(n - 1)$-chainable, i.e.  for every connected open set $U \subset X$, the open set $U \backslash X^{n-2}$ is connected.\\
 \end{enumerate}
 The boundary $\partial X$ of a polyhedron $X$ is the union of all non-maximal simplexes contained in only one maximal simplex. In this work we always assume that $(X,g)$ satisfies  $\partial X=\emptyset$.

 By an {\it{$n$-circuit}} we mean a polyhedron $X$ of homogeneous dimension $n$ such
that in some, (and hence any) triangulation,
\begin{enumerate}[i.]
 \item every $(n - 1)$-simplex is a face of at most two $n$-simplexes (exactly
two if $\partial X =\emptyset$).
\item $X$ is $(n - 1)$-chainable, i.e. $X \backslash X^{n-2}$ is connected, or equivalently
any two $n$-simplexes can be joined by a chain of contiguous $(n - 1)$- and
$n$-simplexes.
\end{enumerate}

Let $S = S(X)$ denote the {\it{singular}} set of an $n$-circuit $X$, i.e. the complement
of the set of all points of $X$ having a neighborhood which is a
topological $n$-manifold (possibly with boundary). $S$ is a closed triangulable
subspace of $X$ of codimension $ \geq 2$, and $X \backslash S$ is a topological $n$-manifold
which is dense in $X$. An admissible circuit is called a {\it{pseudomanifold}}. We call a pseudomanifold $(X,g,d_X)$ a Lipschitz pseudomanifold, if $g$ is Lipschitz continuous. If $g$ is simplexwise smooth such that $(X\backslash S,g|_{X\backslash S})$ has the structure of a smooth Riemannian manifold, we call $(X,g,d_X)$ a {\it{smooth pseudomanifold}}. \footnote{In many texts the term pseudomanifold is used for what we called a circuit.}
\subsection{The Sobolev space $W^{1,2}(X)$}\label{sobolev}
Let $(X,g,d_X)$ denote an admissible Riemannian polyhedron of dimension $n$.
We denote by $\lip^{1,2}(X)$ the linear space of all Lipschitz continuous functions
$u : (X, d_X) \rightarrow \mathbb R$ for which the Sobolev $(1, 2)$-norm $\|u\|$ defined by
\begin{eqnarray*}
\|u\|^2_{1,2} = \int_X (u^2 + |\nabla u|^2)~d\mu_g = \sum_{s\in S^{(n)}(X)} \int_s (u^2 + |\nabla u|^2)~d\mu_g
\end{eqnarray*}
is finite, $S^{(n)}(X)$ denoting the collection of all $n$-simplexes $s$ of $X$ and $|\nabla u|$ the Riemannian norm of the Riemannian
gradient on each $s$ (The Riemannian gradient is defined a.e. in $X$ or  a.e. in each
$s\in S^n(X)$ by Rademacher's theorem for Lipschitz functions on Euclidean domains).

The Lebesgue space $L^2(X)$ is likewise defined with respect to the volume measure.
The Sobolev space $W^{ 1,2} (X)$ is defined as the completion of $\lip^{1,2} (X)$ with
respect to the  Sobolev norm $\|\cdot\|_{1,2}$. We use the notations $\lip_c(X)$, $W^{1,2}_0(X)$, and $W^{1,2}_{\loc}(X)$ for the linear space of  functions in $\lip(X)$ with compact support, the closure of $\lip_c(X)$ in $W^{1,2}(X)$ and  all $u\in L^2_{\loc}(X)$ such that $u\in W^{1,2}(U)$ for all relatively compact subdomains $U$ in $X$.
\subsubsection*{\textbf{Sobolev spaces on  metric spaces.}} Here we recall a few basic notions on analysis on metric spaces.  For the sake of completeness, we compare the $L^2$ based Sobolev space on admissible Riemannian polyhedra  as in \cite{F01} with the one in \cite{C99} and  show that they are equivalent.  We use  \cite{C99} as our main reference. See also \cite{S00, HK98,H96,HK00, AGS11} and \cite{B11} for further references.

 Let $(Y,d,\mu)$ be a metric measure space, $\mu$ a Borel regular measure. Assume also the measure of balls of finite and positive radius are finite and positive. Fix a set $A\subset Y$. Let $f$ be a function on $A$ with values in the extended real numbers.
\begin{defin} An upper gradient  for $f$ is an extended real
valued Borel function $g : A \rightarrow [0,\infty]$ such that for all points $y_1, y_2 \in A$
and all continuous rectifiable curves $c : [0,l]\rightarrow A$ parameterized by arc length
$s$ with $c(0) = y_1$, $c(l) = y_2$, we have
\begin{eqnarray*}
|f(y_2)-f(y_1)|\leq\int_0^l g(c(s))~ds.
\end{eqnarray*}
\end{defin}
Note that in above definition the left-hand side is interpreted as $\infty$, if either $f(y_1)=\pm\infty$ or $f(y_2)=\pm\infty$. If on the other hand, the right-hand side is finite then it follows that $f(c(s))$ is a continuous function of $s$.
For  a Lipschitz function $f$ we define the lower pointwise Lipschitz constant of $f$ at $x$ as
\begin{eqnarray*}
\lipp f(x)=\liminf_{r\rightarrow 0}\sup_{y\in B(x,r)}\frac{|f(y)-f(x)|}{r}.
\end{eqnarray*}
 $\lipp f$ is  Borel, finite and bounded by the Lipschitz constant. Also $\lipp f$ is an upper gradient for $f$. Similarly for Lipschitz function $f$, the upper pointwise Lipschitz constant $f$, $\lip f$, is the Borel function
\begin{eqnarray*}
\lip f(x)=\limsup_{r\rightarrow 0}\sup_{y\in B(x,r)}\frac{|f(y)-f(x)|}{r}.
\end{eqnarray*}
For  any Lipschitz function $f$, we have $\lipp f(x)\leq \lip f$. In the special case $Y=\mathbb R^n$, if $x$ is a point of differentiability
of $f$, we observe that $\lipp f(x)=\lip f(x)=|\nabla f(x)|$. We now define the Sobolev space $H^{1,p}$ for $1\leq p<\infty$.
\begin{defin}
Whenever $f\in L^p(Y)$, let
\begin{eqnarray*}
\|f\|_{1,p}=\|f\|_{L^p}+\inf_{g_i}\liminf_{i\rightarrow\infty}\|g_i\|_{L^p}
\end{eqnarray*}
where the infimum is taken over all sequence  $\{g_i\}$, for which there exists a sequence $f_i\stackrel{\tiny{L^p}}{\longrightarrow}f$, such that $g_i$ is an upper gradient for $f_i$ for all $i$.
\end{defin}
For $p\geq 1$, the Sobolev space $H^{1,p}$, is the
subspace of $L^p$ consisting of functions $f$ for which $\|f\|_{1,p} < \infty$ equipped
with the norm $\|\cdot\|_{1,p}$. The space $H^{1,p}$ is complete.

We  define now the notions of generalized upper and minimal upper gradients. This will allow us to give a nice interpretation of the $H^{1,p}$ norm of Sobolev functions.
\begin{defin} We say
\begin{enumerate}[i.]
\item A function $g \in L^p$ is a generalized upper gradient for $f \in L^p$, if there exist sequences $f_i\stackrel{\tiny{L^p}}{\longrightarrow}f$, $g_i\stackrel{\tiny{L^p}}{\longrightarrow}g$, such that $g_i$ is an upper gradient for $f_i$ for all $i$.
\item  For fixed $p$, a minimal generalized upper gradient for $f$
is a generalized upper gradient $g_f$ such that $\|f\|_{1,p}=\|f\|_{L^p}+\|g_f\|_{L^p}$.
\end{enumerate}
\end{defin}
The following theorem ensures the existence of minimal generalized upper gradient for any Sobolev function.
\begin{thm}\cite{C99}
For all $1 < p < \infty$ and $f \in H^{1,p}$, there exists a minimal
generalized upper gradient $g_f$, which is unique up to modification on
subsets of measure zero.
\end{thm}
We will discuss two important properties of metric spaces called
the {\it{ball doubling property}} and the {\it{Poincar\'e inequality}} for functions on them. These are essential assumptions to get a richer theory on metric spaces.
\begin{defin}
Let $(Y,d,\mu)$ be a metric measure space. The measure $\mu$ is said to be locally doubling if for all $r'$, there exists $\kappa =\kappa(r')$ such that for all $y\in Y$  and $0<r<r'$,
\begin{eqnarray}\label{ball-doubl}
0 <\mu(B_r(y))\leq 2^{\kappa}\mu(B_{r\slash 2}(y)).
\end{eqnarray}
\end{defin}
\begin{defin}
Let $q\geq 1$. We say that $Y$ supports a weak Poincar\'{e} inequality of type $(q,p)$ if
for all $r'>0$, there exist constants $1\leq\lambda<\infty $ and $C=C(p,r') > 0$  such that for all  $r\leq r'$ and all upper gradients $g$ of $f$,
\begin{eqnarray}\label{Poincare}
\left(\int_{B_r(x)}|f-f_{x,r}|^q~d\mu\right)^{1\slash q}\leq Cr\left(\int_{\lambda B_r(x)}|g|^p~d\mu\right)^{1\slash p}
\end{eqnarray}
where $f_{x,r}:=\int_{B_r(x)} f~d\mu$. If $\lambda=1$, then we say that $X$ supports a strong $(q,p)$-Poincar\'{e} inequality.
\end{defin}
For every admissible Riemannian polyhedron $(X,g,\mu_g)$, $\mu_g$ is locally doubling. Moreover $X$ supports a weak $(2,2)$-Poincar\'{e} inequality and by H\"{o}lder's inequality $(1,2)$-Poincar\'e inequality (see Corollary $4.1$ and Theorem $(5.1)$ in  \cite{F01}). In the sequel, the words ``Poincar\'{e} inequality'' refer to $(2,2)$-Poincar\'{e} inequality.
%By abuse of notation we occasionally write "$X$ satisfies the Poincar\'e inequality,to mean that $X$ supports a Poincar\'e inequality of type". $(2,2)$.

By Theorem $4.24$ in \cite{C99} for any metric space which satisfies (\ref{ball-doubl}) and (\ref{Poincare}), for some $1\leq p<\infty$ and $q=1$,  the subspace of locally Lipschitz functions is dense in $H^{1,p}$. Furthermore, on a locally complete metric space with the mentioned properties, for some $1<p<\infty$ and for any $f$ locally lipschitz, we have $g_f=\lip f$, $\mu$-almost everywhere (see \cite{C99} Theorem 6.1). Therefore,
 on a Riemannian polyhedron $(X,g,\mu_g)$, for any $f\in H^{1,2}$, $g_f(y)=|\nabla f(y)|$ for a.e. $y$ and it follows that $ H^{1,2}$ is equivalent to $W^{1,2}$.
%see theorem 4.47

In the following, we always consider $X=(X,g,\mu_g)$ to be an admissible Riemannian polyhedron. Some of the concepts below are defined on metric spaces in general but for simplicity we  present them only on Riemannian polyhedron and for $p=2$. For more information on metric spaces we refer the reader to \cite{B11}.
\subsection{Potential theory background}
 %CAPACITY%%%%%%%%%%%%%%%%%%%%%%%%%%%%%%%%%%%%%%%%%%%%%%%%%%%%%%%%%%%%%%%
In this subsection we recall some of the definitions in potential theory.  First we define different notions of capacities (see \cite{B11}).
\subsubsection*{\textbf{Sobolev and variational capacities.}} The {\it{Sobolev capacity}} of a set $E\subset X$ is the number
\begin{align*}
C(E)=\inf\|u\|_{W^{1,2}(X)}^2
\end{align*}
where the infimum is taken over all $u\in W^{1,2}(X)$ such that $u\geq1$ on $E$.\\

Let $E\subset \Omega\subset X$ and $\Omega$ bounded. The  {\it{variational capacity}} is defined as
\begin{eqnarray}\label{capa}
\capa(E,\Omega)=\inf_u\int_{\Omega}|\nabla u|^2~d\mu_g
\end{eqnarray}
where the infimum is taken over all $u\in W^{1,2}_0(\Omega)$ such that $u\geq 1$ on $E$.  In this definitions the infimum can be taken only over $u\leq1$ such that it is equal $1$ on a neighborhood of $E$. Also we write $\capa(E)=\capa(E,X)$.\\
 %Question:  Why we need the assumption of bounded $\Omega$ and in Troyanov-Goldstein it is open $\Omega$? and also why in the T-G the infimum is taken over $L_{0}^{1,2}$?\\

 A set $U\subset X$ is {\it{quasi open}} if there are open sets $\omega$ of arbitrarily small capacity such that $U\backslash \omega$ is open relative to $X\backslash \omega$. A map $\phi:U\rightarrow Y$ from a quasiopen set $U$ to a topological space $Y$ with a countable base of open sets is {\it{quasicontinuous}} if there are open sets $\omega$ of arbitrarily small capacity such that $\phi|_{U\backslash \omega}$ is continuous.
Clearly this amounts to $\phi^{-1}(V)$ being quasiopen for every open subset $V$ of $Y$.
\subsubsection*{\textbf{Weakly harmonic and weakly sub/super harmonic functions}} A function $u\in W_{loc}^{1,2}(X)$ is said to be {\it{weakly harmonic}} if
\begin{eqnarray*}
\int_X\langle \nabla u,\nabla \rho\rangle ~d\mu_g=0\quad\quad\ \text{for every}~\rho\in \lip_c(X).
\end{eqnarray*}
It is said to be {\it{weakly subharmonic}}, respectively weakly superharmonic, if
\begin{eqnarray*}
\int_X\langle \nabla u,\nabla \rho\rangle ~d\mu_g\leq0,~\text{resp.}~\geq0\quad\quad \text{for every}~\rho\in \lip_c(X).
\end{eqnarray*}
 A function $u\in W^{1,2}(X)$ is weakly harmonic if and only if $u$ minimizes the energy $E(v)$ among all functions $v\in W^{1,2}(X)$ such that
$v-u\in W_0^{1,2}(X)$ (see \cite{F01}). In the following we discuss on the existence of  minimizer under specific assumption on the Riemannian polyhedra.
\begin{thm}\cite{F01} Suppose the following Poincar\'{e} inequality holds
\begin{eqnarray}\label{poin}
\int_X| u|^2~d\mu_g\leq c\int_X|\nabla u|^2~d\mu_g \quad\quad\text{for all}~u\in W_0^{1,2}(X)
\end{eqnarray}
 with $c$ depending only on the admissible
Riemannian polyhedron $X$. For any $f \in W^{1,2}(X)$ the class of competing maps
\begin{eqnarray}
 W_f^{1,2}(X) = \{v\in W^{1,2}(X) : v - f \in W_0^{1,2}(X)\}
\end{eqnarray}
contains a unique weakly harmonic function $u$. That function is the unique
minimizer of  $E(u) = E_0$, where
\begin{eqnarray*}
E_0 &:=&\inf\{E(v) : v\in W^{1,2}(X), v - f \in \lip_c(X)\}\\
&=&\min\{E(v) :  v\in W_f^{1,2}(X)\}.
\end{eqnarray*}
\end{thm}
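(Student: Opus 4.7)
The plan is to apply the direct method of the calculus of variations, combined with the Poincar\'e inequality~\eqref{poin} to obtain coercivity on the affine subspace $W_f^{1,2}(X)$. First, I would pick a minimizing sequence $\{v_n\}\subset W_f^{1,2}(X)$ with $E(v_n)\to E_0$. Writing $w_n:=v_n-f\in W_0^{1,2}(X)$, the triangle inequality in $L^2$ gives $\|\nabla w_n\|_{L^2}\leq \|\nabla v_n\|_{L^2}+\|\nabla f\|_{L^2}$, which is uniformly bounded. Applying \eqref{poin} to $w_n$ then bounds $\|w_n\|_{L^2}$, hence $\|v_n\|_{W^{1,2}}\leq C$. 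Since $W^{1,2}(X)$ is a Hilbert space, a subsequence converges weakly to some $u\in W^{1,2}(X)$; as $W_0^{1,2}(X)$ is a closed linear subspace (hence weakly closed), $u-f\in W_0^{1,2}(X)$, so $u\in W_f^{1,2}(X)$. The energy $E(v)=\int_X|\nabla v|^2\,d\mu_g$ is convex and strongly continuous on $W^{1,2}(X)$, therefore weakly lower semicontinuous, and $E(u)\leq \liminf_n E(v_n)=E_0$. Thus $u$ realizes the minimum over $W_f^{1,2}(X)$.

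For uniqueness, if $u_1,u_2$ are two minimizers, then $(u_1+u_2)/2\in W_f^{1,2}(X)$ and the parallelogram identity yields
\begin{equation*}
E\!\left(\tfrac{u_1+u_2}{2}\right)=\tfrac12 E(u_1)+\tfrac12 E(u_2)-\tfrac14\int_X|\nabla(u_1-u_2)|^2\,d\mu_g.
\end{equation*}
Since the left side is at least $E_0$ and the first two terms on the right equal $E_0$, we conclude $\nabla(u_1-u_2)=0$ a.e. As $u_1-u_2\in W_0^{1,2}(X)$, the Poincar\'e inequality~\eqref{poin} forces $u_1=u_2$.

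Next I would check the equivalence with weak harmonicity via Euler--Lagrange. For any $\rho\in \lip_c(X)\subset W_0^{1,2}(X)$ and $t\in\mathbb R$, $u+t\rho\in W_f^{1,2}(X)$, and expanding
\begin{equation*}
E(u+t\rho)=E(u)+2t\int_X\langle\nabla u,\nabla\rho\rangle\,d\mu_g+t^2 E(\rho)
\end{equation*}
shows that minimality at $t=0$ forces $\int_X\langle\nabla u,\nabla\rho\rangle\,d\mu_g=0$, i.e.\ $u$ is weakly harmonic. Conversely, a weakly harmonic $u\in W_f^{1,2}(X)$ must minimize: for any competitor $v\in W_f^{1,2}(X)$ the function $\rho=v-u\in W_0^{1,2}(X)$ can be approximated in $W^{1,2}$ by $\rho_k\in\lip_c(X)$ (by definition of $W_0^{1,2}$), so testing the weak-harmonicity identity against $\rho_k$ and passing to the limit gives $\int_X\langle\nabla u,\nabla(v-u)\rangle\,d\mu_g=0$, and $E(v)=E(u)+E(v-u)\geq E(u)$.

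Finally, the identity
\begin{equation*}
E_0=\inf\{E(v):v\in W^{1,2}(X),\,v-f\in\lip_c(X)\}=\min\{E(v):v\in W_f^{1,2}(X)\}
\end{equation*}
follows from the density of $\lip_c(X)$ in $W_0^{1,2}(X)$: any $v\in W_f^{1,2}(X)$ is approximated in $W^{1,2}$ by $f+\rho_k$ with $\rho_k\in\lip_c(X)$, and $E$ is continuous in the $W^{1,2}$-norm. The only genuinely delicate step is the coercivity argument, where one must apply the Poincar\'e inequality \emph{to the difference} $v_n-f$ rather than to $v_n$ itself; once this is done the remainder is a routine application of the direct method on the Hilbert space $W^{1,2}(X)$.
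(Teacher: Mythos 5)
Your proof is correct: the coercivity step (applying the Poincar\'e inequality to $v_n-f$ rather than $v_n$), the weak closedness of the affine class, the parallelogram-identity uniqueness, and the Euler--Lagrange equivalence with weak harmonicity are all handled properly. The paper itself gives no proof of this statement --- it is quoted from Eells--Fuglede \cite{F01} --- and the argument there is the same standard Dirichlet-principle/direct-method argument you give, so there is nothing further to reconcile.
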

As a corollary of the above theorem we have
\begin{cor}
Assume that the domain $\Omega\subset X$ is bounded and such that the Sobolev capacity $C(X\backslash \Omega) > 0$. For any $f \in W^{1,2}(\Omega)$, the class of functions
\begin{eqnarray*}
 W_f^{1,2}(\Omega) = \{v\in W^{1,2}(\Omega) : v - f \in W_0^{1,2}(\Omega)\}
\end{eqnarray*}
has a unique solution $u$ of the equation $E(u) = E_{\Omega}$, where
\begin{eqnarray*}
E_{\Omega} :=\inf\{E(v) : v\in W^{1,2}(\Omega), v - f \in W_0^{1,2}(\Omega)\}.
\end{eqnarray*}
\end{cor}
\begin{proof}
 Since $X$ satisfies the Poincar\'{e} inequality and using  Theorem  $5.54$ in \cite{B11}, $\Omega$ satisfies the inequality (\ref{poin}). By  the above theorem, there is a unique minimizer which is weakly harmonic.
\end{proof}
After correction on a null set every weakly harmonic function on $X$ is H\"{o}lder continuous. A continuous weakly harmonic function is called {\it{harmonic}}.
\begin{remark}
From the discussion above one can see in the definition of variational capacity that there is a harmonic function $u$ which takes the minimum in
(\ref{capa}). This function is not necessarily continuous on the boundary of $\Omega\backslash E$.
\end{remark}
%NULLPOLAR %%%%%%%%%%%%%%%%%%%%%%%%%%%%%%%%%%%%%%%%%%%%%%%%%%%%%%%%%%%%%%%%%%%%%%%%%%
\subsubsection*{\textbf{Polar sets.}} A set $S\subset X$ is said to be a {\it{polar set}} for the capacity if for every pair of relatively compact open sets $U_1\Subset U_2\subset X$ such that $d(U_1,X\backslash U_2)>0$ we have
\begin{eqnarray*}
\capa (S\cap \overline{U_1},U_2)=0.
\end{eqnarray*}
According to Theorem $9.52$ in \cite{B11} (see also section $3$ in \cite{GT02}), $S$ is a polar set if and only if every point of $X$ has an open neighborhood $U$ on which there is a superharmonic function which equals  $+\infty$  at every point of $S\cap U$. An equivalent formulation is to say that $C(S)=0$.
\begin{lemme}\label{polar}
A closed set $S\subset X$ is a polar set if and only if for every
neighborhood $U$ of $S$ and every $\epsilon>0$, there exists a function $\lip(X)$ such that
 \begin{enumerate}[i.]
  \item the support of $ \varphi$ is contained in $X\backslash S$.
 \item $0\leq \varphi\leq 1$.
 \item  $ \varphi\equiv 1$ on $X\backslash U$.
\item $\int_X|\nabla  \varphi|^2<\epsilon$.
\end{enumerate}
\end{lemme}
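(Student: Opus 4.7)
I would prove the two implications separately, with the reverse direction requiring a nontrivial intermediate step. For the forward direction ($S$ polar $\Rightarrow$ cutoffs exist), I start from the equivalent characterization $C(S)=0$ recalled just above the lemma. Given $U$ and $\epsilon>0$, I use $C(S)=0$ together with density of $\lip$ in $W^{1,2}$ and truncation to choose, for any prescribed $\delta>0$, a Lipschitz function $u$ with $0\le u\le 1$, $u\equiv 1$ on some open neighborhood $V\supset S$, and $\|u\|_{W^{1,2}}^2<\delta$ (the standard reduction that the capacity infimum may be taken over functions equal to $1$ on a neighborhood of $S$ is used here). I then pick a Lipschitz cutoff $\psi$ equal to $1$ on a smaller neighborhood $V'$ of $S$ with $\overline{V'}\subset V$, vanishing outside $U$, and set $\varphi:=1-u\psi$. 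Conditions (i)--(iii) are immediate from $u\psi\equiv 1$ on $V'$ and $u\psi\equiv 0$ outside $U$, and a product-rule estimate gives $\int_X|\nabla\varphi|^2\,d\mu_g\le C_\psi\|u\|_{W^{1,2}}^2<\epsilon$ once $\delta$ is chosen small.

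For the reverse direction (cutoffs $\Rightarrow$ $S$ polar) I fix $U_1\Subset U_2$ with $d(U_1,X\setminus U_2)>0$ and aim at $\capa(S\cap\overline{U_1},U_2)=0$. I fix a Lipschitz cutoff $\eta$ with $\eta\equiv 1$ on a neighborhood of $\overline{U_1}$, $\eta\equiv 0$ outside $U_2$, and $|\nabla\eta|\le C_\eta$ supported on a compact set $K\subset U_2\setminus\overline{U_1}$. For a neighborhood $U$ of $S$ the hypothesis supplies $\varphi$, and the candidate $v:=\eta(1-\varphi)\in W^{1,2}_0(U_2)$ satisfies $v\equiv 1$ on $S\cap\overline{U_1}$ together with
\[
\int_{U_2}|\nabla v|^2\,d\mu_g \;\le\; 2C_\eta^2\,\mu_g(K\cap U) + 2\int_X|\nabla\varphi|^2\,d\mu_g,
\]
since $(1-\varphi)^2\le\chi_{\overline{U}}$. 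The second summand is driven to $0$ by condition (iv), but the first requires shrinking $U$ around $S$ while keeping $\mu_g(K\cap U)$ small, which by outer regularity of $\mu_g$ is possible exactly when $\mu_g(S)=0$.

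The essential intermediate step, and the one I expect to be the main obstacle, is therefore to establish $\mu_g(S)=0$. I would derive this from the $(2,2)$-Poincar\'e inequality available on every admissible Riemannian polyhedron (Corollary~4.1 of \cite{F01}). Given any ball $B_r(x_0)$ together with an auxiliary neighborhood $U_0$ of $S$ with $B_r(x_0)\not\subset U_0$, I apply the hypothesis to $(U_0,1/n)$ to obtain $\varphi_n$; the mean then satisfies $(\varphi_n)_{B_r}\ge c_0:=\mu_g(B_r\setminus U_0)/\mu_g(B_r)>0$ uniformly in $n$, while $\varphi_n\equiv 0$ on $S\cap B_r$, so the Poincar\'e inequality yields
\[
c_0^2\,\mu_g(S\cap B_r) \;\le\; \int_{B_r}\bigl(\varphi_n-(\varphi_n)_{B_r}\bigr)^2\,d\mu_g \;\le\; Cr^2\int_{\lambda B_r}|\nabla\varphi_n|^2\,d\mu_g \;<\;\frac{Cr^2}{n}.
\]
Letting $n\to\infty$ gives $\mu_g(S\cap B_r)=0$; since $X$ is $\sigma$-compact, covering it by countably many such balls yields $\mu_g(S)=0$, and plugging back into the cross-term estimate completes the proof of the reverse direction.
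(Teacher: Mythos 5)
The paper offers no argument for this lemma beyond the remark that the proof is the same as in the Riemannian-manifold case, citing Proposition 3.1 of \cite{T99}; so your proposal is, by construction, a different (namely, an actual) route. Its architecture is sound and is the natural one: the forward direction from $C(S)=0$ via $\varphi=1-u\psi$, the reverse direction by testing $\capa(S\cap\overline{U_1},U_2)$ with $v=\eta(1-\varphi)$, and the correct diagnosis that the cross term $\int(1-\varphi)^2|\nabla\eta|^2\,d\mu_g$ can only be controlled after one knows $\mu_g(S)=0$. Extracting $\mu_g(S)=0$ from the cutoff hypothesis through the $(2,2)$-Poincar\'e inequality is a legitimate, self-contained way to supply that missing ingredient.

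Two steps need repair. The substantive one is the claim $(\varphi_n)_{B_r}\ge c_0>0$: the condition $B_r(x_0)\not\subset U_0$ only makes $B_r\setminus U_0$ nonempty, not of positive measure, and if $B_r\subset S$ then every open $U_0\supset S$ contains $B_r$, so $c_0=0$ unavoidably. You must restrict to balls with $\mu_g(B_r\setminus S)>0$ --- such a ball exists around every point of $X$ once $S\ne X$ (take the radius large enough to reach a point of the nonempty open set $X\setminus S$; note the equivalence genuinely fails in the degenerate case $S=X$, where $\varphi\equiv 0$ satisfies (i)--(iv) vacuously) --- and then invoke outer regularity of $\mu_g$ to produce an open $U_0\supset S$ with $\mu_g(B_r\cap U_0)<\mu_g(B_r)$, which is exactly where $\mu_g(B_r\cap S)<\mu_g(B_r)$ enters. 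With that modification the covering argument closes. Secondly, in the forward direction, ``density of $\lip$ in $W^{1,2}$ plus truncation'' does not by itself yield a Lipschitz $u$ that is \emph{identically} $1$ on a neighborhood of $S$ with small Sobolev norm: a Lipschitz approximant close in $W^{1,2}$ to a capacity test function need not equal $1$ anywhere. One needs the standard outer-regularity/quasicontinuity argument for the Sobolev capacity, or should compute the capacity of an open neighborhood of $S$, where competitors equal to $1$ on the set are available. A last cosmetic point: for an unbounded neighborhood $U$ that pinches onto $S$, the Urysohn function $\psi$ is only locally Lipschitz; since shrinking $U$ only strengthens conclusion (iii), you should first replace $U$ by a smaller neighborhood admitting a genuine Lipschitz cutoff.
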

\begin{proof}
The proof is based on the definition of polar set and followed by a similar argument  as the case of Riemannian manifolds. See Proposition $3.1$ in  \cite{T99} for the proof of the equivalence on Riemannian manifolds.
\end{proof}
\subsubsection*{\textbf{The Dirichlet space $L_{0}^{1,2}(X)$.}}  In this part we introduce the Dirichlet space $L_{0}^{1,2}(X)$ on an admissible  Riemannian polyhedron $X$ (see \cite{F01} for the definition of Dirichlet spaces). The Dirichlet space $L_{0}^{1,2}(X)$ determines a Brelot harmonic structure on $X$. Using this fact we can show, $X$ has a symmetric Green function which gives us information on the singularities of $X$.
 \begin{prop}\cite{F01}
 Suppose that, for every compact set $K \subset X$,
\begin{eqnarray}\label{cond}
\left(\int_{K}|u|~d\mu_g\right)^2\leq c(K)E(u)\quad\quad \text{for all}~x\in \lip_c(X),
\end{eqnarray}
with $c(K)$ depending only on $X$ and $K$. In particular, $X$ is non-compact. The completion $L_{0}^{1,2}(X )$ of space $\lip_c(X)$ within $L^1_{\loc}(X)$ with respect to the norm $E(u)^{1/2}$ is then a regular Dirichlet space of strongly local type. $L_{0}^{1,2}(X )$ is a subset of $W^{1,2}_{\loc}(X)$.
%Also, if $(X, d_X)$ is complete then $L_{0}^{1,2}(X )$ is an admissible Dirichlet space.
\end{prop}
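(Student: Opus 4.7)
The plan is to verify in turn: (a) the energy-norm completion of $\lip_c(X)$ is well defined inside $L^1_{\loc}(X)$; (b) $X$ is non-compact; (c) the Dirichlet-space axioms hold for $L_0^{1,2}(X)$; and (d) the inclusion $L_0^{1,2}(X) \subset W^{1,2}_{\loc}(X)$. Step (a) is the direct consequence of (\ref{cond}): for any Cauchy sequence $\{u_n\} \subset \lip_c(X)$ in $E(\cdot)^{1/2}$, applying the inequality to $u_n - u_m$ on each compact $K$ gives $\int_K |u_n - u_m|\, d\mu_g \leq \sqrt{c(K)\, E(u_n - u_m)}$, so $\{u_n\}$ is Cauchy in $L^1(K)$ and has a limit $u \in L^1_{\loc}(X)$. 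Closability of the gradient---that $u_n \to 0$ in $L^1_{\loc}$ with $\{u_n\}$ Cauchy in $E^{1/2}$ forces $E(u_n) \to 0$---follows from the same Poincar\'e-based upgrade used in step (d) below, combined with the identification $g_f = |\nabla f|$ of Subsection \ref{sobolev}: an $L^2_{\loc}$-limit of zero functions has vanishing minimal upper gradient. Step (b) is immediate: were $X$ compact, the constant function $1 \in \lip_c(X)$ would have $E(1) = 0$ but $\int_X 1\, d\mu_g > 0$, contradicting (\ref{cond}).

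For step (c), the form $E(u,v) = \int_X \langle \nabla u, \nabla v\rangle\, d\mu_g$ is symmetric, non-negative, and closed by construction. The Markov property follows because the normal contractions $T$ (e.g.\ $T(t) = (0 \vee t) \wedge 1$) preserve $\lip_c(X)$ and satisfy $E(T \circ u) \leq E(u)$ via the chain rule $\nabla(T \circ u) = T'(u)\nabla u$ a.e.\ on each simplex---a consequence of Rademacher's theorem---and this extends to the completion. Regularity holds because $\lip_c(X) \subset C_c(X)$ is dense in $L_0^{1,2}(X)$ in the form norm by definition and uniformly dense in $C_c(X)$ by simplexwise approximation. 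Strong locality is the pointwise identity $\langle \nabla u, \nabla v\rangle = 0$ wherever $v$ is locally constant on $\supp u$, which extends to $L_0^{1,2}(X)$ by approximation.

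For step (d), take $u \in L_0^{1,2}(X)$ with $\lip_c$-approximants $\{u_n\}$; then $\{\nabla u_n\}$ is $L^2$-Cauchy with some limit $v$. To show $u \in W^{1,2}(U)$ with $\nabla u = v|_U$ on each relatively compact $U \subset X$, I would upgrade $L^1_{\loc}$-convergence to $L^2_{\loc}$-convergence via the weak $(2,2)$-Poincar\'e inequality on admissible Riemannian polyhedra (recalled from Corollary 4.1 and Theorem 5.1 of \cite{F01}): on a ball $B \Subset X$, decompose $u_n - u_m = \bigl((u_n - u_m) - (u_n - u_m)_B\bigr) + (u_n - u_m)_B$, bound the oscillation term by Poincar\'e in terms of $E(u_n - u_m)$, and bound the average term via Jensen's inequality and $L^1_{\loc}$-convergence. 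This gives $u \in L^2_{\loc}(X)$ with $\nabla u = v$ weakly, hence $u \in W^{1,2}_{\loc}(X)$. The principal obstacle throughout is this $L^1_{\loc} \to L^2_{\loc}$ upgrade---the Poincar\'e inequality is indispensable since the completion is performed only in the weak $L^1_{\loc}$ topology---and it underlies both closability in step (a) and the Sobolev inclusion in step (d), while the remaining Dirichlet axioms are essentially formal once density and the chain rule on simplices are in place.
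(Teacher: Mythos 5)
The paper does not actually prove this proposition: it is quoted from \cite{F01} (Eells--Fuglede) with no argument supplied, so there is no in-paper proof to compare yours against. That said, your outline is the standard route one would expect and, in steps (b), (c) and (d), it is correct: the constant function $1$ rules out compactness via (\ref{cond}) with $K=X$; the Markov property, regularity and strong locality are routine given the simplexwise chain rule and the density of $\lip_c(X)$ in $C_c(X)$; and the weak $(2,2)$-Poincar\'e inequality (plus control of the ball averages by the $L^1_{\loc}$-limit) correctly upgrades $L^1_{\loc}$-convergence of an $E^{1/2}$-Cauchy sequence to $L^2_{\loc}$-convergence, which is exactly what is needed to conclude $L^{1,2}_0(X)\subset W^{1,2}_{\loc}(X)$.

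The one step that does not hold up as written is the closability argument in (a). You justify it by saying that ``an $L^2_{\loc}$-limit of zero functions has vanishing minimal upper gradient.'' That is true but vacuous: the zero function always has vanishing \emph{minimal} upper gradient, and what the Cheeger machinery gives you is only that the $L^2$-limit $|v|$ of $|\nabla u_n|$ is \emph{a} generalized upper gradient of $0$, i.e. $|v|\geq g_0=0$ --- no information, since every non-negative $L^2$ function is an upper gradient of a constant. What closability actually requires is that if $u_n\in\lip_c(X)$ is $E^{1/2}$-Cauchy and $u_n\to 0$ in $L^1_{\loc}$, then the $L^2$-limit $v$ of the gradients is itself zero, so that $E(u_n)\to 0$ and the energy extends unambiguously to the completion; without this, the form is not ``closed by construction'' as you claim in step (c), and two Cauchy sequences with the same $L^1_{\loc}$-limit could a priori carry different energies. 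The fix is easy and local: on the interior of each maximal simplex the elliptic bounds (\ref{elliptic}) reduce everything to Euclidean coordinates, where for $\phi$ smooth and compactly supported in that interior one has $\int_s u_n\,\partial_i\phi\,dx=-\int_s \partial_i u_n\,\phi\,dx$; the left side tends to $0$ by $L^1_{\loc}$-convergence and the right side to $-\int_s v_i\phi\,dx$, forcing $v=0$ a.e. With that repair inserted, your argument is complete and is, as far as one can tell, the same proof as in \cite{F01}.
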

Note that $W^{1,2}_0(X)\subset L_0^{1,2}(X)\subset W^{1,2}_{\loc}(X)$. Let
 %According to the above proposition, $(L_0^{1,2}(X),E)$ is a strongly local regular Drichlet form.
\begin{eqnarray*}
\Delta:L_0^{1,2}(X)\supset D(\Delta)\rightarrow L^2(X)
\end{eqnarray*}
denote the generator induced from $(L_0^{1,2}(X), E)$, which is a densely defined non-positive definite self-adjoint operator satisfying $E(u,v)=(\Delta u,v)_{L^2}$. Here $D(\Delta)$ denotes the domain of operator $\Delta$. We have
%and $\lip_c(X)$ is the core of $L_{0}^{1,2}(X )$
\begin{thm}\cite{F01}\label{Green}
 Let $(X,g,\mu_g)$ be an admissible Riemannian polyhedra such that the inequality (\ref{cond}) holds. Then $X$ has a unique symmetric Green kernel
 \begin{eqnarray*}
 G:X\times X\rightarrow (0,\infty]
 \end{eqnarray*}
 which is finite and H\"{o}lder continuous off the diagonal $X\times X$.
\end{thm}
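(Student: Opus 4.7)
The plan is to build the Green kernel as the time integral of the heat semigroup associated with the Dirichlet form $(L_0^{1,2}(X),E)$, and to extract its symmetry, its finiteness off the diagonal, and its Hölder regularity from the De Giorgi--Nash--Moser theory that is available on $X$ because $\mu_g$ is locally doubling and $X$ supports a weak $(2,2)$-Poincaré inequality.

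First I would let $\{P_t\}_{t\geq 0}$ denote the symmetric Markov semigroup on $L^2(X,\mu_g)$ generated by $\Delta$. The hypothesis (\ref{cond}) is a Nash-type (Faber--Krahn) inequality: on every compact $K$ it controls $\|u\|_{L^1(K)}$ by $E(u)^{1/2}$, which excludes constants from the closure of $\lip_c(X)$ in the energy norm and therefore places $(L_0^{1,2}(X),E)$ in the transient regime of Fukushima's theory. Combined with the local doubling of $\mu_g$ and the Poincaré inequality inherited from \cite{F01}, the classical Nash/Moser iteration then yields an on-diagonal bound of the form $p_t(x,x)\leq C(x,t)$ for a jointly measurable, symmetric heat kernel $p_t(x,y)$ representing $P_t$. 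Strong locality of the form, together with parabolic De Giorgi--Nash--Moser regularity for local weak solutions on $(X,g,\mu_g)$, upgrades $p_t$ to a jointly Hölder continuous version on $(0,\infty)\times X\times X$.

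Next I would set
\begin{equation*}
G(x,y):=\int_0^\infty p_t(x,y)\,dt,
\end{equation*}
and verify the three required properties. \emph{Symmetry} of $G$ is immediate from the symmetry of $p_t$. \emph{Finiteness off the diagonal} uses the transience produced by (\ref{cond}): the small-$t$ contribution is integrable because the off-diagonal Gaussian-type upper bound from the Nash argument suppresses $p_t(x,y)$ as $t\downarrow 0$ for $x\neq y$, while the large-$t$ contribution is integrable precisely because the form is transient. One should also check the defining identity $E(G(\cdot,y),\varphi)=\varphi(y)$ for $\varphi\in\lip_c(X)$, which is the standard computation using $\Delta\int_0^\infty P_t\varphi\,dt=-\varphi$ and the self-adjointness of $\Delta$. \emph{Hölder continuity off the diagonal} then follows because, for fixed $y$, the function $G(\cdot,y)$ is weakly harmonic on $X\setminus\{y\}$ and Moser's Harnack inequality (which holds on admissible Riemannian polyhedra by doubling and Poincaré) forces such functions to be Hölder continuous; the elliptic Harnack inequality is exactly what is proved for weakly harmonic functions in \cite{F01}.

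Finally, uniqueness of the symmetric Green kernel is a soft argument: any two such kernels differ by a symmetric function $H(x,y)$ which, for each fixed $y$, is weakly harmonic on all of $X$ and lies in $L^{1,2}_0(X)$ in an appropriate limiting sense, so by the Liouville-type content of the Brelot structure determined by $L_0^{1,2}(X)$ it must vanish identically. The main obstacle in this program is the passage from the integrated inequality (\ref{cond}) to genuine pointwise heat-kernel bounds: on a smooth manifold this is classical Nash theory, but on an admissible Riemannian polyhedron one has to push Moser iteration through the singular set, which is where the combination of $(2,2)$-Poincaré and local doubling on $(X,g,\mu_g)$ is essential; once that step is in place, everything else reduces to standard manipulations within the Dirichlet-form and Brelot frameworks already set up in the preceding material.
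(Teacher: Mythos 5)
First, note that the paper itself contains no proof of this statement: Theorem \ref{Green} is quoted from \cite{F01}, and the only route indicated in the surrounding text is that the Dirichlet space $L_0^{1,2}(X)$ determines a Brelot harmonic structure on $X$, from which the symmetric Green kernel is obtained by the axiomatic potential theory of Brelot spaces (Harnack inequality for weakly harmonic functions, solvability of the Dirichlet problem, and the standard construction of the Green function in that framework). Your proposal takes a genuinely different, semigroup-theoretic route: build the heat kernel of the regular strongly local Dirichlet form $(L_0^{1,2}(X),E)$ and set $G(x,y)=\int_0^\infty p_t(x,y)\,dt$. This is a legitimate alternative; it makes the symmetry of $G$ completely transparent (inherited from $p_t(x,y)=p_t(y,x)$) and reduces the regularity question to parabolic De Giorgi--Nash--Moser theory, which is available here because $\mu_g$ is locally doubling and $X$ supports a weak $(2,2)$-Poincar\'e inequality. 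The Brelot route of \cite{F01} never needs the heat semigroup and gets H\"older continuity directly from the elliptic Harnack inequality.

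There is, however, one genuine misattribution that would derail the argument as written. The inequality (\ref{cond}) bounds $\bigl(\int_K|u|\,d\mu_g\bigr)^2$ by $E(u)$ and is exactly Fukushima's criterion for \emph{transience} of the Dirichlet form; it is not a Nash or Faber--Krahn inequality and yields no pointwise decay of $p_t$ whatsoever. In particular the ``off-diagonal Gaussian-type upper bound from the Nash argument'' cannot be extracted from (\ref{cond}); transience only gives $\int_0^\infty p_t(x,y)\,dt<\infty$ for $\mu_g\times\mu_g$-a.e.\ $(x,y)$. The pointwise heat-kernel bounds must come entirely from local doubling and the local Poincar\'e inequality (Sturm-type parabolic Harnack), and since those hypotheses are local in scale they control $p_t(x,y)$ only for small $t$; the large-time integrability has to be pulled out of the abstract transience, for instance by defining $G$ as the $0$-order resolvent on the extended Dirichlet space, observing that $G(\cdot,y)$ is weakly harmonic off the pole, and then using elliptic Harnack/H\"older estimates to upgrade a.e.\ finiteness to everywhere finiteness and continuity off the diagonal. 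Your uniqueness argument is salvageable along the lines you indicate: the difference of two Green kernels with the same pole is weakly harmonic across the pole and lies in the extended Dirichlet space, on which $E$ is a genuine norm precisely because the form is transient, so it vanishes. With the key analytic step reassigned from (\ref{cond}) to the doubling-plus-Poincar\'e package, the program goes through.
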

For local questions,  condition (\ref{cond}) is not required (it is automatically satisfied with $X$ replaced by the open star of a point $a$ of $X$
relative to a sufficiently fine triangulation and in view of inequality (\ref{Poincare})). As a consequence of Theorem \ref{Green}, we have
\begin{prop}
The $(n-2)$-skeleton $X^{(n-2)}$ of an admissible Riemannian
$n$-polyhedron is a polar set.
\end{prop}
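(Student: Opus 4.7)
The plan is to verify the criterion of Lemma \ref{polar}: given a neighborhood $U$ of $X^{(n-2)}$ and $\epsilon>0$, construct $\varphi\in\lip(X)$ with $\supp\varphi\subset X\setminus X^{(n-2)}$, $0\leq\varphi\leq 1$, $\varphi\equiv 1$ on $X\setminus U$, and $\int_X|\nabla\varphi|^2\,d\mu_g<\epsilon$. The polar property is local, so the problem reduces (using local finiteness of the triangulation) to showing that only the finitely many simplexes of dimension $\leq n-2$ meeting a fixed compact set need to be dealt with; we will produce the cut-off via the classical logarithmic construction around the $(n-2)$-skeleton.

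\textbf{The logarithmic cut-off.} Let $d_S(x):=d_X^g(x,X^{(n-2)})$, which is $1$-Lipschitz. For $0<r<R$ small set
\[
\psi_{r,R}(x)=\min\Bigl\{1,\max\bigl\{0,\tfrac{\log(d_S(x)/r)}{\log(R/r)}\bigr\}\Bigr\},
\]
so $\psi_{r,R}=0$ on $\{d_S\leq r\}$ and $\psi_{r,R}=1$ on $\{d_S\geq R\}$. Multiply by a fixed $\eta\in\lip_c(X)$ with $\eta\equiv 1$ on a neighborhood of $X\setminus U$ (possible because the question is local, and $X\setminus U$ can be taken with the appropriate compact separation from $\overline{U_1}\cap X^{(n-2)}$ after the standard localization $U_1\Subset U_2$). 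Define $\varphi_{r,R}:=\psi_{r,R}\,\eta$. For $R$ smaller than the distance from $\supp(1-\eta)$ to $X^{(n-2)}$, one has $\varphi_{r,R}\equiv 1$ on $X\setminus U$, while $\supp\varphi_{r,R}\subset X\setminus X^{(n-2)}$ by construction. Globally Lipschitz regularity follows because $d_S$ is $1$-Lipschitz in the intrinsic metric and the log is applied only where $d_S\geq r>0$.

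\textbf{Energy estimate.} I bound $\int|\nabla\varphi_{r,R}|^2\,d\mu_g$ by $2\int|\nabla\psi_{r,R}|^2\eta^2\,d\mu_g+2\int(1-\psi_{r,R})^2|\nabla\eta|^2\,d\mu_g$. On each maximal $n$-simplex $s$ the metric satisfies (\ref{elliptic}), so it suffices to estimate the corresponding integrals with respect to the Euclidean metric on $s$. In $s$, the set $X^{(n-2)}\cap\bar s$ is a finite union of affine subspaces of codimension $\geq 2$, and on the annular region $\{r\leq d_S\leq R\}$ one has $|\nabla\psi_{r,R}|\leq 1/(d_S\log(R/r))$. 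Using cylindrical coordinates around each affine subspace, the Euclidean computation yields, for codimension $k\geq 2$,
\[
\int_{\{r\leq d_S\leq R\}\cap s}\frac{d\mu_g}{d_S^2(\log(R/r))^2}\ \leq\ \frac{C_s}{\log(R/r)},
\]
and summing over the finitely many simplexes meeting $\supp\eta$ gives $\int|\nabla\psi_{r,R}|^2\eta^2\,d\mu_g\to 0$ as $r\to 0$ with $R$ fixed. The second term is bounded by $\|\nabla\eta\|_\infty^2\,\mu_g(\{d_S\leq R\}\cap\supp\eta)$, which tends to $0$ as $R\to 0$ because $X^{(n-2)}$ has $n$-dimensional Hausdorff measure zero. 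So I first choose $R$ small to make the second piece less than $\epsilon/2$, then choose $r<R$ small to make the first piece less than $\epsilon/2$, which completes the verification.

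\textbf{Main obstacle.} The delicate point is to justify the Euclidean cylindrical-coordinate estimate on a polyhedron: one must know that the intrinsic distance $d_S$ is, on each maximal simplex, comparable to the Euclidean distance to the $(n-2)$-faces contained in that simplex, so that the analytic computation can be performed simplex by simplex. This comparability is a direct consequence of (\ref{elliptic}) together with the local finiteness of the triangulation (an intrinsic geodesic leaving a simplex $s$ to approach $X^{(n-2)}$ through a neighboring simplex re-enters $s$ along a face of dimension $\leq n-2$, so the distance to $X^{(n-2)}\cap\bar s$ inside $s$ already dominates $d_S$ up to a multiplicative constant). Once this comparability is recorded, the rest of the proof is the classical codimension-$2$ capacity calculation.
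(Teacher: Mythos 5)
Your argument is correct in substance, but it is a genuinely different route from the paper's. The paper does not compute anything: it obtains the proposition as a consequence of Theorem \ref{Green} (the existence of the symmetric Green kernel, applied locally on stars where condition (\ref{cond}) holds automatically), combined with the potential-theoretic characterization of polar sets as those covered by neighborhoods carrying a superharmonic function equal to $+\infty$ on the set; the details are delegated to \cite{F01}. You instead verify the capacity criterion directly by the classical logarithmic cut-off around a codimension-$\ge 2$ set, which is self-contained, elementary, and makes transparent exactly where the codimension-$2$ hypothesis enters ($\int_r^R \rho^{k-3}\rho\,d\rho$ versus $(\log(R/r))^2$); the paper's route is shorter and reuses machinery already built for the Dirichlet space $L^{1,2}_0(X)$. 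Two points in your write-up deserve tightening. First, the localization: since $X^{(n-2)}$ and $X\setminus U$ are in general non-compact, a compactly supported $\eta$ cannot be $\equiv 1$ on a neighborhood of $X\setminus U$; you should work directly with the paper's definition of polarity, namely $\capa(X^{(n-2)}\cap\overline{U_1},U_2)=0$ for relatively compact $U_1\Subset U_2$, taking $\eta\in\lip_c(U_2)$ with $\eta\equiv1$ on $U_1$ and using $(1-\psi_{r,R})\eta$ as the test function --- then only finitely many simplexes intervene and your estimates go through verbatim. Second, the comparability of $d_S$ with the simplexwise Euclidean distance does not need (and is not cleanly justified by) the claim that a geodesic re-enters $s$ through a face of dimension $\le n-2$; it follows immediately from the uniform elliptic bounds (\ref{elliptic}), which make $d_X^g$ bi-Lipschitz equivalent to the piecewise-Euclidean intrinsic distance, together with local finiteness of the triangulation, so that $d_S(x)\ge\Lambda^{-1}\min_F e(x,F)$ over the finitely many faces $F$ of dimension $\le n-2$ meeting a neighborhood of $x$. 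With those repairs your proof stands on its own, independently of the Green kernel.
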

We should note that being polar is independent of the Riemannian structure on the polyhedron.
\begin{remark}\label{remov}
Every closed polar subset $F$ of $X$ is removable for Sobolev 
functions,  $W ^{1,2} (X \backslash F) = W^{ 1,2} (X )$. A larger class of removable sets in
this sense is that of all (closed) sets of $(n-1)$-dimensional Hausdorff measure
zero (see Proposition $7.7$ in  \cite{F01}).
\end{remark}
\subsection{Harmonic maps on  Riemannian polyhedra}\label{HMORM}
The energy of a map from a Riemannian domain to an arbitrary metric space was defined and investigated by Korevaar and
Schoen \cite{KS93}. Here  we give an introduction to the concept of energy of maps, energy minimizing maps and harmonic maps  on a Riemannian polyherdron. In the  case that the target $Y$ is a Riemannian $C^1$-manifold,
the energy of the map is given by the usual expression (similarly when the target is a
Riemannian polyhedron with continuous Riemannian metric).

Let $(X,g)$ be an  admissible $n$-dimensional  Riemannian polyhedron with simplexwise smooth Riemannian metric. We do not require that $g$ is continuous across lower dimensional simplexes. Let $Y$ be an arbitrary metric space. Denote by $L^2_{\loc} (X, Y)$ the
space of all $\mu_g$-measurable maps $\varphi : X\rightarrow Y$ having separable essential range \footnote{The essential range of a map $\varphi$ is a closed set of points $q\in Y$ such that for any neighborhood $V$ of $q$, $\varphi^{-1}(V)$ has positive measure.}
and for which $d_Y(\varphi(\cdot),q)\in L^2_{\loc}(X,\mu_g)$ for some point $q$ (and therefore by
the triangle inequality for any $q \in Y$). For $\varphi,\psi\in L^2_{\loc}(X,Y)$ define their distance
\begin{eqnarray*}
D(\varphi,\psi)=\left(\int_X d_Y^2(\varphi(x),\psi(x))~d\mu_g(x)\right)^{1/2}.
\end{eqnarray*}
The approximate energy density of a map $\varphi\in L^2_{\loc} (X, Y)$ is defined for $\varepsilon>0$ by
\begin{eqnarray*}
e_{\varepsilon}(\varphi)(x)=\int_{B(x,\varepsilon)}\frac{d_Y^2(\varphi(x),\varphi(x'))}{\varepsilon^{n+2}}~d\mu_g(x').
\end{eqnarray*}
The function $e_{\varepsilon}(\varphi)$ is of class $L^1_{\loc}(X,\mu_g)$ (see \cite{KS93}).
\begin{defin}
The energy $E(\varphi)$ of a map $\varphi$ of class $L^2_{\loc} (X, Y)$ is defined as
\begin{eqnarray*}
E(\varphi)=\sup_{f\in C_c(X,[0,1])}\left(\limsup_{\varepsilon\rightarrow 0}\int_X fe_{\varepsilon}(\varphi)~d\mu_g\right).
\end{eqnarray*}
\end{defin}
We say that $\varphi$ is locally of finite energy and write $\varphi\in W^{1,2}_{\loc}(X,Y)$, if $E(\varphi|_U)<\infty$ for every relatively compact domain $U \subset X$. For example every Lipschitz continuous map $\varphi:X\rightarrow Y$ is in $ W^{1,2}_{\loc}(X,Y)$. Now we give a necessary and sufficient condition for a map $\varphi$ to be in $W^{1,2}_{\loc}(X,Y)$.
\begin{lemme}
Let $(X, g)$ be an admissible $n$-dimensional Riemannian polyhedron with simplexwise smooth Riemannian metric and $(Y, d_Y)$ a metric
space. A map $\varphi\in L^2_{\loc} (X, Y)$ is locally of finite energy if and only if there is a function
$e(\varphi)\in L^1_{\loc} (X)$  such that $e_{\varepsilon}(\varphi)\rightarrow e(\varphi)$ as $\varepsilon\rightarrow 0$, in the sense of weak
convergence of measures
\begin{eqnarray*}
\lim_{\varepsilon\rightarrow 0}\int_X fe_{\varepsilon}(\varphi)~d\mu_g=\int_X fe(\varphi)~d\mu_g\quad\quad f\in C_c(X).
\end{eqnarray*}
\end{lemme}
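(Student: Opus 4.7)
The plan is to treat the two directions of the biconditional separately: the \emph{if} direction reduces to a one-line estimate from the definition of $E(\varphi)$, while the \emph{only if} direction requires adapting the Korevaar--Schoen construction of the energy density, following the polyhedral version carried out in \cite{F01}.

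For the \emph{if} direction, assuming that the weak limit $e(\varphi)\in L^1_{\loc}(X)$ exists, I would fix a relatively compact domain $U\Subset X$ and test against $f\in C_c(X,[0,1])$ with $\supp f\subset U$: weak convergence gives $\limsup_{\varepsilon\to 0}\int_X f\,e_\varepsilon(\varphi)\,d\mu_g=\int_X f\,e(\varphi)\,d\mu_g\le \int_U e(\varphi)\,d\mu_g<\infty$, and taking the supremum over such $f$ bounds $E(\varphi|_U)$ by $\int_U e(\varphi)\,d\mu_g<\infty$, so $\varphi\in W^{1,2}_{\loc}(X,Y)$.

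For the \emph{only if} direction, assume $\varphi\in W^{1,2}_{\loc}(X,Y)$. The first step is to show that for each relatively compact $U\Subset X$, the family $\{e_\varepsilon(\varphi)\}_{\varepsilon>0}$ is uniformly bounded in $L^1(U)$. This will follow from applying the definition of $E(\varphi|_{U'})<\infty$ to a bump function $f\in C_c(X,[0,1])$ equal to $1$ on $U$ and supported in a slightly larger $U'\Subset X$. By Banach--Alaoglu, any sequence $\varepsilon_k\to 0$ then admits a subsequence along which the Radon measures $e_{\varepsilon_k}(\varphi)\,d\mu_g$ converge weakly on compact subsets of $X$ to a Radon measure $\nu$. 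The next and main step is to show that $\nu$ is independent of the subsequence and absolutely continuous with respect to $\mu_g$. Following the Korevaar--Schoen strategy of \cite{KS93}, I would establish an almost-Cauchy estimate for $\varepsilon\mapsto\int_X f\,e_\varepsilon(\varphi)\,d\mu_g$ on $f\in\lip_c(X)$ by expanding $d_Y^2(\varphi(x),\varphi(x'))$ via the triangle inequality, applying Fubini to compare the averages at different scales, and bounding the resulting error by $\lip(f)$ times a local energy tending to $0$. Density of $\lip_c(X)$ in $C_c(X)$ then forces the full limit to exist, defining a positive Radon measure; absolute continuity follows by dominating $\nu(E)$ by the limiting double-integral expression and comparing, via the ellipticity bound (\ref{elliptic}), with its Euclidean counterpart on each maximal simplex. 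The desired density $e(\varphi)\in L^1_{\loc}(X)$ is then the Radon--Nikodym derivative of $\nu$.

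The principal obstacle will be carrying out this program despite the non-smoothness of $X$: the metric $g$ is only simplexwise smooth, balls $B(x,\varepsilon)$ generally meet several simplexes near the $(n-1)$-skeleton, and the intrinsic distance $d_X^g$ may mix pieces from different strata. I plan to circumvent these issues by splitting the double integral defining $e_\varepsilon(\varphi)$ according to the pair of maximal simplexes containing $x$ and $x'$, using (\ref{elliptic}) to compare polyhedral balls with Euclidean ones up to the constant $\Lambda$, and exploiting the fact that the $(n-1)$-skeleton has $\mu_g$-measure zero so that pairs $(x,x')$ near it contribute negligibly in the limit. This essentially reduces the convergence problem to the classical Korevaar--Schoen theorem on Euclidean simplexes, and a partition-of-unity argument then assembles the simplex-wise densities into a single $e(\varphi)\in L^1_{\loc}(X)$.
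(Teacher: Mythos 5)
The paper does not prove this statement at all: it is quoted as background from \cite{F01} (it is essentially Theorem 9.1 of Eells--Fuglede, which in turn rests on the Korevaar--Schoen construction in \cite{KS93}), so there is no internal proof to compare against. Judged on its own terms, your \emph{if} direction is complete and correct: weak convergence to an $L^1_{\loc}$ density immediately caps $E(\varphi|_U)$ by $\int_U e(\varphi)\,d\mu_g$ for every relatively compact $U$.

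The \emph{only if} direction, however, is a plan rather than a proof, and the two steps you defer are exactly the nontrivial content of the theorem. First, the ``almost-Cauchy estimate'' comparing $\int f e_\varepsilon(\varphi)\,d\mu_g$ at different scales is not obtained by ``expanding $d_Y^2$ via the triangle inequality and applying Fubini'': the squared distance is not subadditive, so one must use $(a+b)^2\le(1+\delta)a^2+(1+\delta^{-1})b^2$ together with the Korevaar--Schoen averaging/subpartition machinery to make the error terms close; this occupies Sections 1.3--1.5 of \cite{KS93} and is the heart of the matter. Second, independence of the subsequential limit and absolute continuity of $\nu$ with respect to $\mu_g$ is itself a theorem (KS Theorems 1.5.1 and 1.10), not a routine domination argument; a priori the limit measure of a finite-energy map could charge a null set. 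Third, your claim that pairs $(x,x')$ lying in different maximal simplexes ``contribute negligibly'' is not automatic: the set of $x$ within $\varepsilon$ of the $(n-1)$-skeleton has measure of order $\varepsilon$, but $e_\varepsilon$ carries the factor $\varepsilon^{-n-2}$, so the cross-simplex contribution is $O(1)$ unless one first controls the oscillation of $\varphi$ at scale $\varepsilon$; showing that the skeleton carries no energy for an admissible polyhedron is part of what Eells--Fuglede actually prove, not an input. So the architecture of your argument is the right one, but as written the hard direction contains a genuine gap at each of these three points.
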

\subsubsection*{\textbf{Energy of maps into Riemannian manifolds.}} Let the domain be an arbitrary admissible Riemannian polyhedron
$(X, g)$ ($g$ is only measurable with local elliptic bounds, unless  otherwise specified) and the target is a Riemannian $C^1$-manifold $(N, h)$ without boundary, $X$ of dimension $n$ and  $Y$ of dimension $m$.

A chart $\eta$ of $N$, $\eta : V \rightarrow \mathbb R^m$ is  bi-Lipschitz if the components $h_{\alpha\beta}$ of $h|_V$
have elliptic bounds
\begin{eqnarray}
\Lambda_V^{-2} \sum_{\alpha=1}^{m}(\eta^{\alpha})^2 \leq h_{\alpha\beta}\eta^{\alpha}\eta^{\beta} \leq \Lambda_V^{2} \sum_{\alpha=1}^{m}(\eta^{\alpha})^2.
\end{eqnarray}
Relative to a given countable atlas on a Riemannian $C^1$-manifold $(N, h)$,
 a map $\varphi : (X, g)\rightarrow (N, h)$ is of class $W^{1,2}_{\loc}(X, N)$, or locally
of finite energy, if
\begin{enumerate}[i.]
\item $\varphi$ is a quasicontinuous (after correction on a set of measure zero).
\item  Its components $\varphi_1,\ldots,\varphi_m$ in charts $\eta : V \rightarrow \mathbb R^m$ are of class $W ^{1,2}(U)$
for every quasiopen set $U\subset \varphi^{-1}(V)$ of compact closure in $X$.
\item The energy density $e(\varphi)$ of $\varphi$, defined a.e. in each of the quasiopen
sets $ \varphi^{-1}(V)$ covering $X$ by
\begin{eqnarray*}
e(\varphi)=(h_{\alpha\beta}\circ\varphi)g( \nabla\varphi^{\alpha},\nabla\varphi^{\beta})
\end{eqnarray*}
is locally integrable over $(X, \mu_g)$.
\end{enumerate}
The energy of $\varphi \in W^{1,2}_{\loc}(X, N)$ is defined by $E(\varphi) = \int_X e(\varphi)~d\mu_g$.

\subsubsection*{\textbf{Energy minimizing maps.}} We suppose that $(X, g)$, $n$-dimensional admissible Riemannian polyhedra
with $g$ simplexwise smooth and $Y$ any metric space.  A map $\varphi \in W^{1,2}_{\loc}(X,Y)$ is
said to be locally energy minimizing if $X$ can be covered by relatively compact
domains $ U \subset X$ for which $E(\phi|_U)\leq E(\psi|_U)$ for every map $\psi \in W^{1,2}_{\loc}(X,Y)$
such that $\varphi=\psi$ a.e. in $X \backslash U$.
\subsubsection*{\textbf{Harmonic maps.}}
Consider an admissible Riemannian polyhedron $(X, g)$, of dimension $n$, and a
metric space $(Y,d_Y)$,
\begin{defin}\label{harmonicmap}
 A harmonic map $\varphi :X\rightarrow Y$ is a continuous map of
class $\varphi \in W^{1,2}_{\loc}(X,Y)$, which is locally energy minimizing in the sense that $X$ can
be covered by relatively compact subdomains $U$, for each of which there is
an open set $ V\supset\varphi(U)$ in $Y$ such that
\begin{eqnarray*}
E(\varphi|_U) \leq E(\psi|_U)
\end{eqnarray*}
%Truncation argument
for every continuous map $\psi \in W^{1,2}_{\loc}(X,Y)$ with $\psi(U) \subset V$ and $\varphi=\psi$ in $X \backslash U$.
\end{defin}
Every continuous, locally energy minimizing map $\varphi :X\rightarrow Y$ is harmonic. Also if
$Y$ is a simply connected complete Riemannian polyhedron
of non-positive curvature, then a harmonic map $\varphi :X\rightarrow Y$ is the same as
a continuous locally energy minimizing map.

  We proceed now to characterize harmonic maps are continuous, weakly harmonic maps. We consider here $(X, g)$ to be an arbitrary admissible Riemannian polyhedron and $g$ just bounded measurable with local elliptic bounds, $X$ of dimension $n$, and
$(N, h)$  a smooth Riemannian manifold without boundary, and the dimension of $N$ is $m$. We denote by $\Gamma_{\alpha\beta}^k$ the Christoffel symbols on $N$.
\begin{defin}\label{whar}
A weakly harmonic map $\varphi:X\rightarrow N$ is a quasicontinuous
map of class $ W^{1,2}_{\loc}(X,N)$ with the following property:
for any chart $\eta: V \rightarrow \mathbb R^n$ on $N$ and any quasiopen set $U \subset \varphi^{-1}(V)$ of
compact closure in $X$, the equation
\begin{equation}\label{EL2}
\int_U \langle\nabla\lambda,\nabla \varphi^k\rangle ~d\mu_g=\int_U\lambda\cdot (\Gamma_{\alpha\beta}^k\circ\varphi)\langle\nabla\varphi^{\alpha},\nabla\varphi^{\beta}\rangle~d\mu_g
\end{equation}
holds for every $k = 1,\ldots, m$ and every bounded function $\lambda\in W^{1,2}_{0}(U) $.
\end{defin}
According to \cite{F01}, a continuous map $ \varphi\in W^{1,2}_{\loc}(X,N)$ is harmonic (Definition \ref{harmonicmap}) if and only if it is weakly harmonic (Definition  \ref{whar}).
%Some of the results concerning energy minimizing  maps on Riemannian manifolds, extend to the case of  Riemannian polyhedra with some restrictions on the geometry of the target.
%\begin{thm}\cite{F01}
%If $Y$ is a simply connected complete Riemannian polyhedron
%of non-positive curvature, every locally energy minimizing map $\varphi: X\rightarrow Y$
%is H\"{o}lder continuous.
%\end{thm}
%There is a  harmonic map in each homotopy class of every continuous map between Riemannian manifolds. This result can be generalized for the map between Riemannian polyhedra as following.
%\begin{thm}\cite{F01}
% Let $X$ and $Y$ be compact Riemannian polyhedra. Assume that
%
%(i) $X$ is admissible, and
%
%(ii) $Y$ has non-positive curvature.
%
%Then every homotopy class $\cal{H}$ of continuous maps $X\rightarrow Y$ has an Energy minimizer
%relative to $\cal{H}$, and any such is Holder continuous.
%\end{thm}
%%%%%%%%%%%%%%%%%%%%%%%%%%%%%%%%%%%%%%%%%%%%%%%%%%%%%%%%%%%%%%%%%%%%%%%%%%%%%%%%%%%%%%%%%%%%%%%%%%%%%%%%%%%%%% %%%%%%%%%%%%%%
%%%%%%%%%%%%%%%%%%%%%%%%%%%%%%%%%%%%%%%%%%%%%%%%%%%%%%%%%%%%%%%%%%%%%%%%%%%%%%%%%%%%%%%%%%%%%%%%%%%%%%%%%%%%%%%%%%%%%%
\section{Ricci Curvature on Riemannian Polyhedra}\label{Ricci}
 In the past few years, several notions of boundedness of Ricci curvature from below  on general metric spaces have appeared. Sturm \cite{S06} and Lott-Villani \cite{V09} independently introduced the so called curvature-dimension condition on a metric measure space denoted by $\CD(K,N)$.
 The curvature dimension condition implies the generalized Brunn-Minkowski inequality (hence the Bishop-Gromov comparison and Bonnet-Myer's theorem) and a Poincar\'e inequality (see \cite{S06,LV07,V09}). Meanwhile, Sturm and Ohta introduced a measure contraction property
 denoted as $\MCP(K,N)$ in Ohta's work. The condition $\MCP(K,N)$ also implies the Bishop-Gromov comparison, Bonnet-Myer's theorem and a Poincar\'e inequality (see \cite{S06,O07}).
 Note that all of these generalized notions of Ricci curvature bounded below are equivalent to the classical one on smooth Riemannian manifolds.  Then after the reduced curvature dimension condition  $\CD^*(K,N)$ has been introduce by Bacher and Sturm in \cite{BS10}
 to overcome local-to-global property and it is equivalent to local $\CD(K,N)$ condition.  More recently  the notion of  Riemannian curvature dimension condition $\RCD(K,\infty)$
  has been introduced in \cite{AGS12} and it is equivalent to the $\CD(K,\infty)$ on infinitesimally Hilbertian metric measure spaces. The finite dimensional version of this notion
 $\RCD(K,N)$ has been introduced in \cite{EKS13} and independently in \cite{AMS13} and it is equivalent to $\CD^*(K,N)$ on infinitesimally Hilbertian metric measure spaces. 
 
 Here we define both $\CD(K,N)$, $\MCP(K,N)$ and show that on a Riemannian polyhedron we can use both of them.
 In the following definitions, we always assume that  $(X,d)$  is a separable length space, $P(X)$ is the set of all Borel probability measures $\mu$ satisfying  $\int_X d_X(x,y)^2 ~d\mu(y)<\infty$ for some $x\in X$. $P_2(X)$ is the set $P(X)$ equipped with the $L^2$-Wasserstein distance $W_2$ defined as
\begin{eqnarray*}
W_2(\mu_0,\mu_1)^2=\inf_{\pi}\int_{X\times X}d(x_0,x_1)^2 d\pi(x_0,x_1)
\end{eqnarray*}
for $\mu_0$, $\mu_1$ in $P_2(X)$ and $\pi$ in $P(X\times X)$ ranges between all transference plan between $\mu_0$ and $\mu_1$. A transference plan is defined as 
\begin{eqnarray*}
{p_0}_*(\pi)=\mu_0\quad\quad{p_1}_*(\pi)=\mu_1
\end{eqnarray*}
where $p_0,p_1:X\times X\rightarrow X$ are projection to the first and second factors respectively.
\subsubsection*{\textbf{Curvature Dimension Condition.}}  We now define the notion of spaces satisfying $\CD(K,N)$ condition following \cite{V09}. Suppose $(X,d)$ is a compact length space. Let $U : [0,\infty)\rightarrow \mathbb R$ be a continuous convex function with $U(0) = 0$. We define the non-negative function
\begin{eqnarray*}
p(r)=rU'_+(r)-U(r)
\end{eqnarray*}
with $p(0)=0$.  Given a reference probability measure $\nu\in P_2(X)$, define the function $U_{\nu}:P_2(X)\rightarrow \mathbb R\cup\{\infty\}$
by
\begin{eqnarray*}
U_{\nu}(\mu)=\int_X U(\rho(x))d\nu(x)+U'(\infty)\mu_s(X)
\end{eqnarray*}
where
$$
\mu=\rho\nu+\mu_s
$$
is the Lebesgue decomposition of $\mu$ with respect to $\nu$ into an absolutely continuous
part $\rho\nu$ and a singular part $\mu_s$, and
\begin{eqnarray*}
U'(\infty)=\lim_{r\rightarrow \infty}\frac{U(r)}{r}.
\end{eqnarray*}
If $N \in [1,\infty)$ then we define $\cal{DC}_N$ to be the set of such functions $U$ so
that
\begin{eqnarray*}
\psi(\lambda)=\lambda^NU(\lambda^{-N})
\end{eqnarray*}
is convex on $(0,\infty)$. We further define $\cal{DC}_{\infty}$ to be the set of such functions $U$
so that the function
\begin{eqnarray*}
\psi(\lambda)=e^{\lambda}U(e^{-\lambda})
\end{eqnarray*}
is convex on $(-\infty,\infty)$. A relevant example of an element in $\cal{DC}_N$ is given by
\begin{align}
H_{N,\nu}=\left\{\begin{array}{ll}
Nr(1-r^{-1\slash N})& \text{if}~1<N<\infty,\\
r\log r& \text{if}~N=\infty.
\end{array} \right.
\end{align}
\begin{defin}
\begin{enumerate}[i.]
\item Given $N \in [1,\infty]$, we say that a compact measured length
space $(X, d, \nu)$ has non-negative $N$-Ricci curvature if for all $\mu_0, \mu_1 \in P_2(X)$
with $\supp(\mu_0) \subset \supp(\nu)$ and $\supp(\mu_1) \subset \supp(\nu)$, there is some Wasserstein
geodesic $\{\mu_t\}_{t\in[0,1]}$ from $\mu_0$ to $\mu_1$ so that for all $U \in \cal{DC}_N$ and all $t \in [0, 1]$,
\begin{eqnarray}\label{DCN}
U_{\nu}(\mu_t)\leq t U_{\nu}(\mu_1)+ (1-t)U_{\nu}(\mu_0).
\end{eqnarray}
\item Given $K \in \mathbb R$, we say that $(X, d, \nu)$ has $\infty$-Ricci curvature bounded below by
$K$ if for all $\mu_0, \mu_1 \in P_2(X)$ with $\supp(\mu_0) \subset \supp(\nu)$ and $\supp(\mu_1) \subset \supp(\nu)$,
there is some Wasserstein geodesic $\{\mu_t\}_{t\in[0,1]}$ from $\mu_0$ to $\mu_1$ so that for all
 $U \in \cal{DC}_{\infty}$ and all $t \in [0, 1]$,
\begin{eqnarray}\label{DCinf}
U_{\nu}(\mu_t)\leq t U_{\nu}(\mu_1)+ (1-t)U_{\nu}(\mu_0)-\frac{1}{2}\lambda(U)t(1-t)W_2(\mu_0,\mu_1)^2
\end{eqnarray}
where $\lambda:\cal{DC}_{\infty}\rightarrow \mathbb R\cup\{-\infty\}$ is defined as
\begin{eqnarray*}
\lambda(U)=\inf_{r>0}K\frac{p(r)}{r}=\left\{\begin{array}{ll}
K\lim_{r\rightarrow0^+}\frac{p(r)}{r}&\text{if}~K>0,\\
0&\text{if}~K=0,\\
K\lim_{r\rightarrow\infty}\frac{p(r)}{r}&\text{if}~K<0.
\end{array} \right.
\end{eqnarray*}
\end{enumerate}
\end{defin}
Note that inequalities (\ref{DCN}) and (\ref{DCinf}) are only assumed to hold along
some Wasserstein geodesic from $\mu_0$ to $\mu_1$, and not necessarily along all such
geodesics. This is what is called weak displacement convexity.
%\begin{defin}
%Let $(M,g,\Phi)$ be an $n$-dimensional smooth metric measure space. For $N\in[1,\infty]$, Let the $N$-Ricci tensor $\ric_N$ be defined by
%\begin{align}
%\ric_N=\left\{\begin{array}{l}
%\ric-\hes(\ln\Phi)\quad\quad\quad \quad\quad\quad \quad\quad\quad\quad\quad\text{if}~N=\infty,\\
%\ric-\hes(\ln\Phi)-\frac{1}{N-n}\der\ln\Phi\otimes \der\ln\Phi\quad~~\text{if}~n<N<\infty,\\
%\ric-\hes(\ln\Phi)-\infty(\der\ln\Phi\otimes \der\ln\Phi)\quad\quad\text{if}~N=n,\\
%-\infty\quad\quad\quad\quad\quad\quad\quad\quad\quad\quad\quad\quad\quad\quad\quad\quad\text{if}~N<n,\\
%\end{array} \right.
%\end{align}
%by the convenient $\infty\cdot0=0$
%\end{defin}
%\begin{thm}
% (a) For $N \in [1,\infty)$, the smooth metric measure space $(M,g,\Phi)$
%has nonnegative $N$-Ricci curvature if and only if $\ric_N \geq 0$.
%
%(b) $(M,g,\Phi)$ has $\infty$-Ricci curvature bounded below by $K$ if and only if $\ric_{\infty}\geq Kg$
%\end{thm}
%\begin{prop}
%Let $(X, d, \nu)$ be a compact measured length space. Assume that $H_{N,\nu}$ is weakly displacement convex on $P_2(X)$, for some $N\in(1\infty)$.
%Then for all $x\in \supp(\nu)$ and all $0 < r_1 \geq r_2$,
%\begin{eqnarray*}
% \nu(B_{r_2}(x))\leq \frac{r_2}{r_1}\nu(B_{r_1}(x))
%\end{eqnarray*}
%\end{prop}
\begin{prop}\label{bishop1}
If a compact measured length space $(X, d, \nu)$ has non-negative $N$-Ricci curvature for some $N\in [1,\infty)$, then for all $x\in \supp(\nu)$ and all $0 < r_1\leq r_2$ the following inequality holds
\begin{eqnarray*}
 \nu(B_{r_2}(x))\leq \left(\frac{r_2}{r_1}\right)^N\nu(B_{r_1}(x)).
\end{eqnarray*}
\end{prop}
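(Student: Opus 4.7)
The plan is to apply the $N$-displacement convexity inequality (\ref{DCN}) with the entropy functional $U = H_{N,\nu} \in \mathcal{DC}_N$ to the endpoint measures $\mu_0 := \nu|_{B_{r_2'}(x)}/V(r_2')$ and $\mu_1 := \delta_x$, where $V(r) := \nu(B_r(x))$ and $r_2' < r_2$ is an auxiliary parameter that will eventually be sent to $r_2$. Since $X$ is compact and $x \in \supp(\nu)$ (so $V(r)>0$ for every $r>0$ and $\{x\} \subseteq \supp \nu$), both endpoints lie in $P_2(X)$ with supports in $\supp(\nu)$; thus the hypothesis of non-negative $N$-Ricci curvature provides some Wasserstein geodesic $(\mu_t)_{t\in[0,1]}$ along which (\ref{DCN}) holds for every $U \in \mathcal{DC}_N$.

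A direct computation using the constant density $1/V(r_2')$ of $\mu_0$ together with the pure singularity of $\delta_x$ with respect to $\nu$ yields the endpoint energies
$U_\nu(\mu_0) = V(r_2')\,H_{N,\nu}(1/V(r_2')) = N\bigl(1 - V(r_2')^{1/N}\bigr)$ and $U_\nu(\mu_1) \leq U'(\infty) = N$. The key geometric input is that every Wasserstein geodesic terminating at a Dirac mass $\delta_x$ is realised by a probability measure $\Pi$ on unit-speed geodesics $\gamma:[0,1]\to X$ with $\gamma(1)=x$ almost surely, so that $d(\gamma(t),x) = (1-t)\,d(\gamma(0),x) \leq (1-t) r_2'$. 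Setting $r_1 := (1-t) r_2$ and choosing $r_2' < r_2$ so that $(1-t) r_2' < r_1$, we obtain $\supp \mu_t \subseteq \overline{B_{(1-t) r_2'}(x)} \subseteq B_{r_1}(x)$ and consequently $V_t := \nu(\supp \mu_t) \leq V(r_1)$.

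Jensen's inequality applied to the convex function $H_{N,\nu}$ against the normalised restriction $\nu|_{\supp \mu_t}/V_t$ (together with the decreasing monotonicity of $V \mapsto N(1 - V^{1/N})$ and $U'(\infty) \geq 0$ to absorb the singular part of $\mu_t$ harmlessly) bounds $U_\nu(\mu_t) \geq N\bigl(1 - V_t^{1/N}\bigr) \geq N\bigl(1 - V(r_1)^{1/N}\bigr)$. Plugging the three estimates into (\ref{DCN}) and simplifying yields $V(r_1)^{1/N} \geq (1-t)\, V(r_2')^{1/N}$; letting $r_2' \uparrow r_2$ and invoking the left-continuity of $V$ gives $V(r_1)^{1/N} \geq (r_1/r_2)\,V(r_2)^{1/N}$, which after raising to the $N$-th power is exactly the asserted Bishop-Gromov inequality.

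The main conceptual point to verify is that the specific displacement interpolation we exploit coincides with (or can be chosen as) the Wasserstein geodesic along which (\ref{DCN}) is granted. This is automatic in our setting because the terminal measure is a Dirac: no choice is available in the transport, every optimal plan is supported on $X \times \{x\}$, and hence every Wasserstein geodesic from $\mu_0$ to $\delta_x$ must be of the above cone form. The remaining ingredients -- Jensen's inequality, the approximation $r_2' \uparrow r_2$ needed to pass from closed to open balls, the straightforward verification that $H_{N,\nu}$ lies in $\mathcal{DC}_N$ for $N>1$, and a standard substitute entropy when $N=1$ -- are routine.
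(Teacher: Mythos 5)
The paper gives no proof of this proposition at all: it is quoted from Lott--Villani (\cite{V09}, Theorem~5.31), so there is no internal argument to compare yours against. Your proof is, in substance, the standard Lott--Villani argument and it is correct: the endpoint computations $U_\nu(\mu_0)=N\bigl(1-V(r_2')^{1/N}\bigr)$ and $U_\nu(\delta_x)\le N$ for $U=H_{N,\nu}$, the contraction $\supp\mu_t\subseteq \overline{B_{(1-t)r_2'}(x)}\subseteq B_{r_1}(x)$, and the Jensen lower bound $U_\nu(\mu_t)\ge N\bigl(1-V_t^{1/N}\bigr)$ all check out (for the last one, Jensen plus the singular term give $N-Nm^{1-1/N}V_t^{1/N}\ge N-NV_t^{1/N}$ with $m$ the absolutely continuous mass of $\mu_t$, using $m\le 1$ and $1-1/N\ge 0$, so the ``harmless absorption'' is a genuine, if slightly understated, computation). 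Two points you defer should be made explicit. First, the assertion that \emph{every} Wasserstein geodesic terminating at $\delta_x$ has the cone form is what lets you use whichever geodesic the weak displacement convexity hypothesis grants; it rests on the fact that over a compact length space every geodesic of $P_2(X)$ is induced by a dynamical transference plan (\cite{V09}, Proposition~2.10), and you should cite that rather than call it automatic. Second, $H_{1,\nu}(r)=r-1$ violates $U(0)=0$ and is not in $\mathcal{DC}_1$, so the case $N=1$ really is outside the displayed computation; it does follow routinely, as you indicate, by running the argument for $N'>1$ (legitimate since $\mathcal{DC}_{N'}\subseteq\mathcal{DC}_1$, so nonnegative $1$-Ricci curvature implies nonnegative $N'$-Ricci curvature) and letting $N'\downarrow 1$ in the resulting inequality.
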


 To generalize the notion of $N$-Ricci curvature to the non-compact case, we always consider a complete pointed locally compact metric measure space $(X,\star,\nu)$. Also for $U_{\nu}$ to be a  well-defined functional on $P_2(X)$, we impose the restriction $\nu \in M_{-2(N-1)}$, where $M_{-2(N-1)}$
  is the space of all non-negative Radon measures $\nu$ on $X$ such that
 \begin{eqnarray*}
  \int_X(1+d(\star,x)^2)^{-(N-1)}~d\nu(x)<\infty.
  \end{eqnarray*}
  We define $M_{-\infty}$ by the condition $\int_X e^{-cd(\star,x)^2}~d\nu(x)<\infty$, where $c$ is a fixed positive constant. We should mention that most of the results for compact case (for example the Bishop-Gromov comparison)
 are valid for the  non-compact case.
 %Measure Contraction Property %%%%%%%%%%%%%%%%%%%%%%%%%%%%%%%%%%%%%%%%%%%%%%%%%%%%%%%%%%%%%%%%%%%%%%%%%%%%%%%%%%%%%%%%%%
\subsubsection*{\textbf{Measure Contraction Property}} We define now the notion of measure contraction property $\MCP(K,N)$ following \cite{O07}.  Let $(X, d_X)$ be a length space and  $\mu$  a Borel measure on $X$ such that $0 < \mu(B(x, r)) < \infty$  for every $x\in X$
and $r > 0$, where $B(x, r)$  denotes the open ball with center $x \in X$ and
radius $r > 0$.

Let $\Gamma$ be the set of minimal geodesics  $\gamma : [0, 1]\rightarrow X$  and define
the evaluation map $e_t$  by $e_t(\gamma) := \gamma(t)$ for each $t\in [0, 1]$. We regard $\Gamma$ as a
subset of the set of Lipschitz maps $\lip([0, 1],X)$ with the uniform topology.  A dynamical
transference plan $\Pi$ is a Borel probability measure on $\Gamma$ and a path $\{\mu_t\}_{t\in[0,1]}\subset P_2(X)$
given by $\mu_t=(e_t)_*\Pi$ is called a displacement interpolation associated to $\Pi$. For the exact definition of dynamical transference plan and displacement interpolation we refer the reader to \cite{V09}.
For $K\in\mathbb R$, we define the function $s_K$ on $[0,\infty)$ (on $[0,\pi\slash \sqrt {K})$ if $K>0$) by
\begin{align}
s_K(t):=\left\{\begin{array}{ll}
(1\slash \sqrt{K}) \sin(\sqrt{K}t)&\text{if}~K>0,\\
 t &\text{if}~K=0,\\
(1\slash \sqrt{-K})\sinh(\sqrt{-K}t)&\text{if}~K<0.
\end{array} \right.
\end{align}
\begin{defin}
For $K,N\in\mathbb R$ with $N > 1$, or with $ K\leq0$ and $N = 1$, a metric measure
space $(X,d,\mu)$ is said to satisfy the $(K,N)$-measure contraction property $\MCP(K,N)$, if for every point $x\in X$ and measurable set $A \subset  X$ (provided that
$A\subset B(x,\pi\sqrt{(N-1)\slash K })$ if $K > 0$) with $0 < \mu(A) <\infty$, there exists a displacement
interpolation $\{\mu_t\}_{t\in[0,1]}\subset P_2(X)$ associated to a dynamical transference plan $\Pi = \Pi_{x,A}$
satisfying
\begin{enumerate}[i.]
\item  We have $\mu_0=\delta_x$ and $\mu_1 =(\mu|_A)^{-}$ as measures where we denote by $(\mu|_A)^-$ the
normalization of $\mu|_A$, i.e. $(\mu|_A)^- := \mu(A)^{-1}\cdot \mu|_A$.\\
\item  For every $t\in [0, 1]$,
\begin{eqnarray*}
d\mu\geq (e_t)_*\left( t\left\{ \frac{s_K(t ~l(\gamma)\slash\sqrt{N-1})}{s_K( l(\gamma)\slash\sqrt{N-1})}\right\}^{N-1}\mu(A)d\Pi(\gamma)\right)
\end{eqnarray*}
holds as measures on $X$, where we set $0\slash0 = 1$ and by convention, we read
\begin{eqnarray*}
\left\{ \frac{s_K(t ~l(\gamma)\slash\sqrt{N-1})}{s_K( l(\gamma)\slash\sqrt{N-1})}\right\}^{N-1}=1
\end{eqnarray*}
if $K\leq 0$ and $N=1$.
\end{enumerate}
\end{defin}
%\begin{prop}
%Let $X$ be an $n$-dimensional, complete Alexandrov space with curvature $\geq K$, and $\cal{H}^n$ be the $n$-dimensional Hausdorff measure on $X$. Then $(X,\cal{H}^n)$ satisfies the $((n - 1)K, n)$-MCP.
%\end{prop}
Here we state two results that we are going to use in the sequel.
\begin{prop}
Let $(M, g)$ be an $n$-dimensional, complete Riemannian manifold without
boundary with $n \geq 2$. Then a metric measure space $(M, d_g, \nu_g)$ satisfies the $\MCP(K, n)$
if and only if $\ric_g \geq K$ holds. Here $d_g$ and $\nu_g$ denote the Riemannian distance and Riemannian volume element.
\end{prop}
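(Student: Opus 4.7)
The plan is to establish both implications by translating the MCP condition into a pointwise Jacobian comparison for the exponential map, which by classical Jacobi field comparison is equivalent to the Ricci lower bound.

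For the direction $\ric_g \geq K \Rightarrow MCP(K,n)$, I would proceed as follows. Fix $x \in M$ and a measurable set $A \subset M$ with $0 < \nu_g(A) < \infty$ (imposing the diameter restriction $A \subset B(x, \pi\sqrt{(n-1)/K})$ when $K > 0$). Since the cut locus $C(x)$ has $\nu_g$-measure zero, for $\nu_g$-almost every $y \in A$ there is a unique minimizing geodesic $\gamma_y : [0,1] \to M$ joining $x$ to $y$. Define the dynamical transference plan $\Pi$ as the pushforward of $\nu_g(A)^{-1} \nu_g|_A$ under the measurable map $y \mapsto \gamma_y$. Then $\mu_0 = \delta_x$ and $\mu_1 = (\nu_g|_A)^{-}$, giving condition (i).

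For condition (ii), the measure $(e_t)_*\Pi$ is the pushforward of $\nu_g(A)^{-1}\nu_g|_A$ under the map $F_t(y) = \exp_x(t \exp_x^{-1}(y))$. Computing the Jacobian of $F_t$ in polar normal coordinates centered at $x$, one obtains a factor $t$ from the radial direction and a factor given by a ratio of norms of Jacobi fields along $\gamma_y$ from the $n-1$ tangential directions. The classical Bishop-type Jacobi comparison under $\ric_g \geq K$ yields
\begin{equation*}
|\det dF_t(y)| \;\geq\; t \left(\frac{s_K\!\left(t\, l(\gamma_y)/\sqrt{n-1}\right)}{s_K\!\left(l(\gamma_y)/\sqrt{n-1}\right)}\right)^{n-1},
\end{equation*}
and the change-of-variables formula then translates this Jacobian lower bound into the required measure inequality $d\nu_g \geq (e_t)_*(\cdots)$.

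For the converse $MCP(K,n) \Rightarrow \ric_g \geq K$, I would apply MCP with $A$ a small ball $B(y,\varepsilon)$ around a point $y \neq x$ not in $C(x)$, and pass to the limit $\varepsilon \to 0$. Along the unique geodesic $\gamma$ from $x$ to $y$, the displacement interpolation concentrates on $\gamma$, and the measure-inequality (ii) forces the Jacobian of $\exp_x$ along $\gamma$ to satisfy the Bishop-type lower bound at every interior point. Letting $y$ vary yields the same Jacobi field estimate for all geodesics emanating from $x$, and a standard second-variation calculation on geodesic variations then gives $\ric_g(v,v) \geq K|v|^2$ for every $v \in T_x M$. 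Since $x$ is arbitrary, $\ric_g \geq K$ on $M$.

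The main obstacle is the converse direction: one must extract a pointwise differential-geometric bound from an integrated measure-theoretic inequality. This requires a careful localization and a uniform treatment of the Jacobian as $\varepsilon \to 0$, so that the $s_K$ distortion and the factor $t$ appear with their sharp exponents. The forward direction is more mechanical but also demands handling the cut locus and justifying the change of variables carefully on the complement of $C(x)$.
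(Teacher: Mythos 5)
The paper offers no proof of this proposition: it is one of the ``two results that we are going to use in the sequel,'' quoted from Ohta \cite{O07}, so there is no internal argument to measure your proposal against. Your sketch reproduces the standard proof from that reference and is essentially correct. The forward direction is fine as written: off the cut locus the plan $\Pi$ supported on the unique minimal geodesics is well defined, the Jacobian of $F_t(y)=\exp_x\bigl(t\exp_x^{-1}(y)\bigr)$ in polar normal coordinates is $t\,\mathcal{A}(tr,\theta)/\mathcal{A}(r,\theta)$, and the Bishop comparison (monotonicity of $r\mapsto\mathcal{A}(r,\theta)/s_K(r/\sqrt{n-1})^{n-1}$ under $\ric_g\geq K$) yields exactly the distortion factor in condition (ii); the area formula, together with injectivity of $F_t$ off the cut locus, converts the pointwise Jacobian bound into the measure inequality. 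For the converse, the one point you should make explicit is that for $A=B(y,\varepsilon)$ with $y$ off the cut locus of $x$, \emph{any} dynamical plan interpolating $\delta_x$ and $(\nu_g|_A)^-$ is necessarily supported on the unique minimal geodesics, so the existential quantifier in the definition of $MCP(K,n)$ costs nothing; testing (ii) against $B=F_t(A)$, dividing by $\nu_g(A)$, and letting $\varepsilon\to 0$ at a Lebesgue point then gives the pointwise inequality $\mathcal{A}(tr,\theta)\geq \mathcal{A}(r,\theta)\bigl(s_K(tr/\sqrt{n-1})/s_K(r/\sqrt{n-1})\bigr)^{n-1}$ for all $t\in(0,1)$. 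Your ``second-variation calculation'' is then just the second-order expansion $\log\mathcal{A}(r,\theta)=(n-1)\log r-\tfrac{r^2}{6}\ric(\theta,\theta)+o(r^2)$ compared with the analogous expansion of $(n-1)\log s_K(r/\sqrt{n-1})$, which forces $\ric(\theta,\theta)\geq K$ since the coefficient of $r^2$ must dominate. With those two clarifications the argument is complete.
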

In the following theorem we state Bishop volume comparison theorem for the space satisfying $\MCP(K,N)$.
\begin{prop}\label{bishop2}Let $(X, \mu)$ be a metric space
satisfying the $\MCP(K,N)$. Then for any $x\in X$, the function
\begin{eqnarray*}
\mu(B(x, r))\cdot\left\{\int_0^r s_K\left(\frac{s}{\sqrt{N-1}}\right)^{N-1}d_s\right\}^{-1}
\end{eqnarray*}
is monotone non-increasing in $r \in (0,\infty)$ ($r\in (0,\pi\sqrt{\tfrac{N-1}{K}})$
 if $K > 0$).
\end{prop}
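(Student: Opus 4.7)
The strategy is to convert the Bishop--Gromov statement into a comparison of infinitesimal ``sphere measures'' and extract that comparison directly from $MCP(K,N)$ applied to thin annular shells. Setting $V(r):=\mu(B(x,r))$ and $v(r):=\int_0^{r}s_K(s/\sqrt{N-1})^{N-1}\,ds$, we want $V(r)/v(r)$ non-increasing. Since $v$ is strictly increasing and absolutely continuous, a standard real-analysis lemma (monotonicity of a ratio of integrals from monotonicity of the integrand ratio) reduces this to showing that the density ratio
\begin{eqnarray*}
r \;\longmapsto\; \frac{dV/dr}{s_K(r/\sqrt{N-1})^{N-1}}
\end{eqnarray*}
is non-increasing, interpreted distributionally since $V$ need not be smooth.

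To obtain this density comparison, I fix $0 < r < R$ (with $R<\pi\sqrt{(N-1)/K}$ when $K>0$) and, for each small $\delta>0$, apply $MCP(K,N)$ to the annular shell $A_\delta := B(x,R+\delta)\setminus B(x,R)$. This furnishes a dynamical transference plan $\Pi$ from $\delta_x$ to $(\mu|_{A_\delta})^-$ along minimal geodesics $\gamma$ with $l(\gamma)\in[R,R+\delta]$. At time $t=r/R$, each $\gamma(t)$ has $d(x,\gamma(t))=tl(\gamma)\in[r,r+\delta r/R]$, so $(e_t)_*\Pi$ is supported in $B(x,r+\delta r/R)\setminus B(x,r)$. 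The MCP inequality then gives
\begin{eqnarray*}
\mu\bigl(B(x,r+\tfrac{\delta r}{R})\setminus B(x,r)\bigr) \;\ge\; \frac{r}{R}\int_{\Gamma}\left\{\frac{s_K\!\bigl(\tfrac{r}{R}l(\gamma)/\sqrt{N-1}\bigr)}{s_K\!\bigl(l(\gamma)/\sqrt{N-1}\bigr)}\right\}^{N-1}\!\!\mu(A_\delta)\,d\Pi(\gamma).
\end{eqnarray*}
Since $l(\gamma)\to R$ uniformly as $\delta\to 0$, the bracketed scaling factor converges uniformly to $\bigl[s_K(r/\sqrt{N-1})/s_K(R/\sqrt{N-1})\bigr]^{N-1}$. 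Dividing both sides by $\delta$ and letting $\delta\to 0$ produces, at every Lebesgue point of $dV/dr$, the density comparison
\begin{eqnarray*}
\frac{dV}{dr}(r)\cdot s_K(R/\sqrt{N-1})^{N-1} \;\ge\; \frac{dV}{dR}(R)\cdot s_K(r/\sqrt{N-1})^{N-1},
\end{eqnarray*}
which integrates to the claimed monotonicity of $V(r)/v(r)$.

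The main obstacle is making the limit $\delta\to 0$ rigorous. The MCP condition only guarantees existence of \emph{some} transference plan satisfying the inequality, and gives no quantitative control on how $\Pi_\delta$ depends on $\delta$, so the uniform convergence $l(\gamma)\to R$ must be extracted from the fact that $\supp\Pi_\delta$ lies in a shrinking family of shells together with the strong local compactness of $X$ and weak continuity of push-forwards. Additionally, since $V$ may fail to be differentiable everywhere, the density estimate has to be interpreted in a Radon--Nikodym or distributional sense; this is handled by a coarea-type argument applied simplex-wise on the Riemannian polyhedron, using local doubling of $\mu_g$ to ensure that almost every radius is a Lebesgue point. When $K>0$, one must also keep the constructions inside the admissible diameter $\pi\sqrt{(N-1)/K}$, which is automatic here since the final statement is valid only on that range.
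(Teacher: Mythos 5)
The paper offers no proof of this proposition: it is quoted from Ohta's work \cite{O07} (Theorem 5.1 there) as a known fact, so there is no internal argument to compare against. That said, your core mechanism --- applying $MCP(K,N)$ to thin annular shells around radius $R$ and pushing the mass back to radius $r=tR$ with $t=r/R$, so that the scaling factor converges to $\bigl[s_K(r/\sqrt{N-1})/s_K(R/\sqrt{N-1})\bigr]^{N-1}$ --- is exactly the mechanism used in the literature, and that part of your computation is correct.

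There is, however, a genuine gap in your final step. You derive the comparison $V'(r)/s_K(r/\sqrt{N-1})^{N-1}\geq V'(R)/s_K(R/\sqrt{N-1})^{N-1}$ at almost every pair of radii and then assert that this ``integrates to'' the monotonicity of $V/v$. That implication requires $V(r)=\mu(B(x,r))$ to be absolutely continuous: a monotone function satisfies only $V(b)-V(a)\geq\int_a^b V'$, so if $V$ carries a singular part (mass concentrated on spheres, or a singular continuous part), the a.e.\ derivative comparison says nothing about that part and $V/v$ could still increase across the singular set. Nothing in the hypotheses gives you absolute continuity, and your proposed remedy --- a coarea argument ``applied simplex-wise on the Riemannian polyhedron'' using local doubling of $\mu_g$ --- is inadmissible here, because the proposition is stated for an arbitrary metric measure space satisfying $MCP(K,N)$, not for a polyhedron. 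The standard way to close the gap (and the way Ohta and Sturm argue) is to never differentiate: keep the annuli at finite thickness, apply the MCP inequality with $t=r/R$ to each annulus $\bar B(x,r_{i+1})\setminus B(x,r_i)$ of a partition $0=r_0<\cdots<r_k=R$, bound the scaling factor below by its infimum over $l\in[r_i,r_{i+1}]$, and sum. The annulus measures telescope exactly to $\mu(B(x,r))$ and $\mu(B(x,R))$, and the limit as the mesh tends to zero is taken only in the smooth factors $s_K(\cdot)^{N-1}$, which converge as Riemann sums to $\int_0^r$ and $\int_0^R$; this yields $\mu(B(x,r))/\mu(B(x,R))\geq v(r)/v(R)$ with no regularity assumption on $V$ whatsoever.
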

In the following we show that we can apply both  measure contraction property and curvature dimension condition to a complete Riemannian polyhedra  $(X,g,\mu_g)$. By previous section, a Riemannian polyhedron $(X,g,\mu_g)$ with the metric $d_X=d_X^g$ is a length space.  Also for any $x,y\in X$ we have
\begin{eqnarray*}
e(x,y)\leq d^e_X(x,y)
\end{eqnarray*}
where $e(x,y)$ denotes the Euclidian distance. It is easy  then to show that $\mu_g$ is in $M_{-2(N-1)}$ and so on a complete Riemannian polyhedron we can use the notion of  $\CD(K,N)$. Also $\mu_g$ is Borel and by Lemma $4.4$ in \cite{F01}, for any $r$ there exist a constant $c(r)$ such that
\begin{eqnarray*}
c(r)^{-1}\Lambda^{-2n}r^n\leq\mu_g(B(x,r))\leq c(r)\Lambda^{2n}r^n
\end{eqnarray*}
for all $x\in X$. Therefore $0<\mu_g(B(x,r))<\infty$ and the notion of $\MCP(K,N)$ is also applicable 
for $N\geq n$. By Theorem $2.4.3$ in \cite{AT04}, we have the Hausdorff dimension is $n$ and by Corollary $2.7$ in \cite{O07} $N$ should be greater than $n$.\\

In the rest of this work by $\ric_{N,\mu_g}\geq K$ can be taken to mean that $(X,g,\mu_g)$ satisfies either $\MCP(K,N)$ or $\CD(K,N)$ unless otherwise specified. In the following Lemma we show that any complete Riemannian polyhedron with non-negative Ricci curvature has infinite volume.
\begin{lemme}\label{bishop}
Let $(X,g,\mu_g)$ be a complete, non-compact Riemannian polyhedron of dimension $n$. If $\ric_{N,\mu_g}(X)\geq0$ for $N\geq n$, then $X$ has infinite volume.
\end{lemme}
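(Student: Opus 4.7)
The plan is to adapt the classical Yau-type argument for complete non-compact Riemannian manifolds with $\ric\geq 0$ to the polyhedral setting, using Bishop-Gromov (Propositions \ref{bishop1} and \ref{bishop2}) together with a ray emanating from a fixed base point to produce a self-improving volume inequality that forces $\mu_g(X)=\infty$.

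First I would construct a ray. Since an admissible Riemannian polyhedron is locally compact and $(X,d_X^g)$ is a complete length space, the Hopf-Rinow-Cohn-Vossen theorem applies, so closed metric balls are compact. Non-compactness then supplies points $p_k$ with $d_X^g(x_0,p_k)\to\infty$; extracting a subsequential limit of the unit-speed minimizing geodesics $x_0\to p_k$ by the standard diagonal argument produces a unit-speed ray $\gamma:[0,\infty)\to X$ with $\gamma(0)=x_0$.

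Next, for each $t>0$, set $y_t:=\gamma(t)$, so $d_X^g(x_0,y_t)=t$. The triangle inequality gives the disjointness $B(x_0,t/2)\cap B(y_t,t/2)=\emptyset$, the inclusion $B(x_0,t/2)\cup B(y_t,t/2)\subset B(x_0,3t/2)$, and the inclusion $B(y_t,3t/2)\supset B(x_0,t/2)$. Applying the Bishop-Gromov comparison at $y_t$ (which, whether we invoke $CD(0,N)$ via Proposition \ref{bishop1} or $MCP(0,N)$ via Proposition \ref{bishop2}, amounts for $K=0$ to the monotonicity of $r\mapsto \mu_g(B(y_t,r))/r^N$) yields
\begin{equation*}
\mu_g(B(y_t,t/2))\geq 3^{-N}\mu_g(B(y_t,3t/2))\geq 3^{-N}\mu_g(B(x_0,t/2)).
\end{equation*}
Combining this with the disjointness gives
\begin{equation*}
\mu_g(B(x_0,3t/2))\geq \mu_g(B(x_0,t/2))+\mu_g(B(y_t,t/2))\geq (1+3^{-N})\,\mu_g(B(x_0,t/2)).
\end{equation*}

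Writing $V(r):=\mu_g(B(x_0,r))$ and substituting $r=t/2$, this reads $V(3r)\geq (1+3^{-N})V(r)$ for every $r>0$, so iteration gives $V(3^k r_0)\geq (1+3^{-N})^k V(r_0)$. Since $V(r_0)>0$ for any $r_0>0$ (balls in a Riemannian polyhedron have positive $\mu_g$-measure, by the local Ahlfors regularity recorded in the paper), we conclude $V(3^k r_0)\to\infty$, whence $\mu_g(X)=\infty$. The only slightly delicate step is the construction of the ray; once that is secured, the remaining inequalities are purely metric manipulations and an application of the Bishop-Gromov comparison already at our disposal. I would therefore expect the main obstacle to be cleanly justifying the Hopf-Rinow property on an admissible Riemannian polyhedron, which in turn follows from local compactness together with the completeness of the intrinsic distance $d_X^g$.
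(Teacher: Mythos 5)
Your proof is correct and follows essentially the same route as the paper: both produce a geodesic ray from a base point (the paper cites Papadopoulos for this; your Hopf--Rinow plus diagonal argument is the standard proof of that fact) and then apply the Bishop--Gromov comparison of Propositions \ref{bishop1}/\ref{bishop2} along the ray. The only cosmetic difference is that the paper compares the balls $B(\gamma(t),t-1)\subset B(\gamma(t),t+1)$ centered at the moving point and lets $t\to\infty$ in the resulting squeeze, whereas you derive the self-improving inequality $V(3r)\geq(1+3^{-N})V(r)$ at the fixed base point and iterate; both manipulations are valid.
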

\begin{proof}
First we consider the case $\MCP(0,N)$. By the Bishop comparison theorem, Theorem \ref{bishop2},   for $x\in X$ and all $0<r_1\leq r_2$,
\begin{eqnarray*}
\mu_g(B_{r_2}(x))\leq \left( \frac{r_2}{r_1}\right)^N \mu_g(B_{r_1}(x)).
\end{eqnarray*}
By Proposition $10.1.1$  in \cite{P05} for every point in $X$, there exist a geodesic ray from that point. Consider a  geodesic ray $\gamma(t)$, $0\leq t<\infty$, such that $\gamma(0)=x$. We construct the balls $B(\gamma(t),t-1)$ and $B(\gamma(t),t+1)$ centered at $\gamma(t)$ with radius $t-1$ and $t+1$. We have
\begin{eqnarray*}
\frac{\mu_g(B(\gamma(0),1))+\mu_g(B(\gamma(t),t-1))}{\mu_g(B(\gamma(t),t-1))}\leq\frac{\mu_g(B(\gamma(t),t+1))}{\mu_g(B(\gamma(t),t-1))}\leq\left( \frac{t+1}{t-1}\right)^N,
\end{eqnarray*}
and so
\begin{eqnarray*}
 1+\frac{\mu_g(B(\gamma(0),1))}{\mu_g(B(\gamma(t),t-1))}\leq\left( \frac{t+1}{t-1}\right)^N.
\end{eqnarray*}
Letting $t\rightarrow \infty $, we get $\mu_g(B(\gamma(t),t-1))\rightarrow \infty$ and therefore $X$ has infinite volume.
By Theorem \ref{bishop1}, and since $X$ is a complete locally compact length space, we can repeat the proof for the case when $X$  satisfies the non-negative $N$-Ricci curvature condition $\CD(0,N)$, for $N\in(1,\infty)$.
\end{proof}
%\begin{prop}
%Let $X$ be a complete geodesic metric space of infinite diameter. Then every point in $X$ is the origin of some geodesic ray.
%\end{prop}
Here we recall some remarks that we need through the rest of this paper.
\begin{remark}\label{ric-man}
\begin{enumerate}[i.]
\item By Remark $5.8$ in \cite{S06} if $(X,d,\mu)$ satisfy $\MCP(K,N)$ so does  any  convex set $A\subset X$. When $X$ is a smooth pseudomanifold, for any  point $x\in X\backslash S$, there exist a  closed totally convex neighborhood $V$ around $x$ (for every  point in a Riemannian manifold there is a geodesic ball which is totally convex).  Therefore if $X$ satisfies  $\ric_{N,\mu_g}\geq K$, so does $X\backslash S$. The same result is valid on metric measure spaces with $\CD(K,N)$ condition, see Theorem 5.53 in \cite{V09}.
\item  By definition of $\CD^*(K,N)$, $\CD^*(0,N)$ is equivalent to $\CD(0,N)$.
\item Since the Sobolov space $W^{1,2}(X)$ on an admissible Riemannian polyhedra $(X,g,\mu_g)$ is a Hilbert space, then $X$ is infinitesimally Hilbertian\footnote{see Definition $4.18$ in \cite{G12} for the definition of infinitesimally Hilbertian.}. Therefore the notion $\RCD(K,\infty)$ is equivalent to $\CD(K,\infty)$ and $\RCD(0,N)$ is equivalent to $\CD(0,N)$.
\end{enumerate}
 %. Admissible Riemannian polyhedra are the examples of infintesimally Hilbertian space.
\end{remark}

%%%%%%%%%%%%%%%%%%%%%%%%%%%%%%%%%%%%%%%%%%%%%%%%%%%%%%%%%%%%%%%%%%%%%%%%%%%%%%%%%%%%%%%%%%%%%%%%%%%%%%%%%%%%%%%%%%%%%%%%%%%%%%%%%%%%%%%%%
%%%%%%%%%%%%%%%%%%%%%%%%%%%%%%%%%%%%%%%%%%%%%%%%%%%%%%%%%%%%%%%%%%%%%%%%%%%%%%%%%%%%%%%%%%%%%%%%%%%%%%%%%%%%%%%%%%%%%%%%%%%%%%%%%%%%%%%%
\section{Some Function Theoretic Properties  On Complete Riemannain Polyhedra}\label{complete}
\subsection{Liouville-type Theorems for Functions}\label{complete1}
The aim of this section is to generalize some of the results in \cite{Y82} in order to prove some vanishing theorems for harmonic maps on Riemannian polyhedra.
In \cite{Y82}, Yau used the Gaffney's Stokes theorem on complete Riemannian manifolds to prove that every smooth subharmonic function with bounded  $\|\nabla f\|_{L_1}$ is harmonic. He uses this fact to prove that there is no non-constant $L^p$, $p>1$, non-negative subharmonic function on a complete
manifold. We generalize this theorem to complete admissible polyhedra for $p=2$ as stated in Theorem \ref{yau1}.
%There we do not need more than $W^{1,2}_{\loc}$-regularity for the function $f$.
\begin{proof}[Proof of Theorem \ref{yau1}]
Fix a base point $x_0\in X$ and define $\rho:X\rightarrow \mathbb R$  as
\begin{eqnarray*}
\rho(x)=\max \{0, \min\{1,2-\frac{1}{R}d(x,x_0)\}\}
\end{eqnarray*}
Observe that $\rho$ is $\frac{1}{R}$-Lipschitz and $\rho=0$ on $X\backslash B(x_0,2R)$ and $\rho=1$ on $B(x_0,R)$.  Since $f$ is subharmonic,
% Take a non-negative Lipschitz continuous function $\rho:X\rightarrow \mathbb R$ such that for an arbitrary point $x\in X$ and $R>0$,
%\begin{eqnarray*}
% 0 \leq\rho(y)\leq 1,\\
% \rho(y)=1 ~\text{for }~y\in B(x,R),\\
%\rho(y)=0~\text{for}~y\in X\backslash B(x,2R),\\
% &|\nabla \rho|<\frac{c}{R} ~\text{a.e. for some constant }~ c>0.
%\end{eqnarray*}
%For example we can choose $\rho$ as follows: if  $r(y)=d(y,x)$ then $\rho(y)=\eta (\frac{r(y)}{R})$ where $\eta$ is a smooth function such that
%$0\leq \eta(t) \leq 1$ and $\eta(t)=1$ for $t\leq 1$ and $\eta(t)=0$ for $t>2$.
\begin{eqnarray*}
0&\geq&\int_X\langle \nabla(\rho^2 f),\nabla f\rangle ~d\mu_g\\
&=&\int_X\langle (\nabla \rho^2) f+ (\nabla f)\rho^2, \nabla f\rangle~ d\mu_g \\
&=&\frac{1}{2}\int_X\langle \nabla \rho^2,\nabla f^2\rangle ~d\mu_g+\int_X\rho^2|\nabla f|^2~d\mu_g\\
&=&2\int_X\langle \rho\nabla \rho,f\nabla f\rangle ~d\mu_g+\int_X\rho^2|\nabla f|^2~d\mu_g.
\end{eqnarray*}
From Cauchy-Schwarz we have 
\begin{eqnarray*}
\int_X\langle \rho\nabla \rho,f\nabla f\rangle~d\mu_g&=&\int_X\langle f\nabla \rho,\rho\nabla f\rangle ~d\mu_g\\
&\geq& -{\left(\int_X  |f\nabla \rho|^2 ~d\mu_g\right)}^{\frac{1}{2}}{\left( \int_X|\rho\nabla f|^2~d\mu_g\right)}^{\frac{1}{2}}.
\end{eqnarray*}
Combining the two previous inequalities, we obtain
\begin{eqnarray*}
0&\geq& 2\int_X\langle \rho\nabla \rho,f\nabla f\rangle d\mu_g+\int_X\rho^2|\nabla f|^2~d\mu_g\\
&\geq& \int_{B_{2R}\backslash B_R}|\rho\nabla f|^2~d\mu_g- 2{\left (\int_{B_{2R}\backslash B_R}|f\nabla \rho|^2~d\mu_g\right)}^{\frac{1}{2}}{\left(\int_{B_{2R}\backslash B_R}|\rho\nabla f|^2~d\mu_g\right)}^{\frac{1}{2}}\\
&+&\int_{B_{R}}|\nabla f|^2d~\mu_g.
\end{eqnarray*}
The last line is a polynomial,  $P(\psi)=\psi^2-2b\psi+c$, where $\psi$  is
 \begin{eqnarray*}
 {\left(\int_{B_{2R}\backslash B_R}|\rho\nabla f|^2~d\mu_g\right)}^{\frac{1}{2}}
 \end{eqnarray*}
and it has non-positive value only if $b^2\geq c$, which means that
\begin{eqnarray*}
\int_{B_{R}}|\nabla f|^2d\mu_g\leq \int_{B_{2R}\backslash B_R}f^2|\nabla\rho|^2\leq \frac{c^2}{R^2}\int_{B_{2R}}f^2~d\mu_g
\end{eqnarray*}
and so
\begin{eqnarray}\label{polyin}
\int_{B_{R}}|\nabla f|^2d\mu_g \leq \frac{c^2}{R^2}\int_{X}f^2~d\mu_g.
\end{eqnarray}
Sending $R$ to infinity and using the fact that $f$ has finite $L^2$-norm, we conclude that
\begin{eqnarray*}
\int_{X}|\nabla f|^2~d\mu_g =0.
\end{eqnarray*}
 Since $X$ is admissible, $f$ is constant on $X$; first we prove that $f$ is constant on each maximal $n$-simplex $S$ and then using the $n-1$-chainability of $X$, we prove this in the star of any vertex $p$ of $X$ and then by connectedness on $X$.
 \end{proof}
In the following theorem, we show that the Laplacian of a weakly subharmonic function $f\in W_{\loc}^{1,2}(X)$ on a pseudomanifold in the distributional sense is  a locally finite Borel measure. This gives us a verifying of Green's formula on these spaces. We then use this theorem, to prove that a continuous weakly subharmonic function with   $\|\nabla f\|_{L^1}<\infty$ on a complete normal circuit is harmonic.
\begin{thm}\label{radon}
Let $(X,g,\mu_g)$ be an $n$-pseudomanifold. Let $f$ be a weakly subharmonic function in $W_{\loc}^{1,2}(X)$, such that $\|\nabla f\|_{L^1}$ is finite. Then there exists a unique locally finite Borel measure $\textbf{m}_f$ on $X$ such that
\begin{eqnarray*}
\int_X h ~\textbf{m}_f=-\int_X \langle\nabla f, \nabla h \rangle~d\mu_g\quad\quad\text{for all}~h\in \lip_{c}(X).
\end{eqnarray*}
\end{thm}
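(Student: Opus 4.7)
My plan is to apply the Riesz--Markov--Kakutani representation theorem to the linear functional
$$L(h) := -\int_X \langle \nabla f, \nabla h \rangle \, d\mu_g, \qquad h \in \lip_c(X).$$
The weak subharmonicity of $f$ says exactly that $L(h) \geq 0$ whenever $h \geq 0$ is a Lipschitz test function, so $L$ is a positive functional on $\lip_c(X)$. Uniqueness of $\mathbf{m}_f$ will then follow from the density of $\lip_c(X)$ in $C_c(X)$.

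The first (and key) step is to show that $L$ is continuous in the sup-norm on Lipschitz functions with a fixed compact support. Given a compact $K \subset X$, pick a relatively compact open neighborhood $U$ of $K$ and a cutoff $\psi \in \lip_c(X)$ with $0 \leq \psi \leq 1$, $\psi \equiv 1$ on $K$, and $\supp \psi \subset U$; such $\psi$ exists since $X$ is a length space and one can take $\psi = \min\{1, \lambda^{-1} d(\cdot, X \setminus U)\}$ for suitable $\lambda$. For any $h \in \lip_c(X)$ with $\supp h \subset K$, both $\|h\|_\infty \psi \pm h$ are nonnegative and Lipschitz with compact support, so positivity of $L$ yields
$$|L(h)| \leq \|h\|_\infty \, L(\psi) \leq \|h\|_\infty \, \|\nabla \psi\|_\infty \, \|\nabla f\|_{L^1} < \infty,$$
where the global $L^1$-bound on $\nabla f$ enters in the last step. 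This is the crucial sup-norm bound.

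Next I would extend $L$ to $C_c(X)$: each $h \in C_c(X)$ with $\supp h \subset K$ can be uniformly approximated by Lipschitz functions $h_n$ supported in a slightly larger compact $K'$ (for instance multiply an inf-convolution $h_\lambda(x) = \inf_y \{h(y) + \lambda d(x,y)\}$ by the cutoff $\psi$), and the sup-norm bound above shows $\{L(h_n)\}$ is Cauchy. The limit $\bar L(h)$ is well defined, linear, and positive on $C_c(X)$, so Riesz--Markov--Kakutani produces a unique positive Borel measure $\mathbf{m}_f$ on the locally compact Hausdorff space $X$ with $\bar L(h) = \int_X h \, d\mathbf{m}_f$ for all $h \in C_c(X)$; the sup-norm bound also gives $\mathbf{m}_f(K) \leq L(\psi) < \infty$, so $\mathbf{m}_f$ is locally finite. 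Restricting to $h \in \lip_c(X)$ recovers the Green-type identity in the statement.

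The main obstacle I foresee is purely technical: constructing the Lipschitz cutoff $\psi$ and the Lipschitz approximations $h_n$ on a pseudomanifold whose Riemannian metric is only measurable with elliptic bounds. Both constructions are intrinsic to the length space structure $(X, d_X^g)$, so they go through without needing smoothness; the use of ``pseudomanifold'' rather than merely ``admissible'' is not essential for the existence of $\mathbf{m}_f$ and will become relevant only when Green's formula is invoked across the codimension-at-least-$2$ singular set in subsequent applications.
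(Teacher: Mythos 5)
Your proof is correct, but it takes a genuinely different and more direct route than the paper. The paper works on the regular part $M=X\backslash S$ of the pseudomanifold: it covers $M$ by charts $(U_\alpha,\psi_\alpha)$, shows that each local functional $\Lambda_\alpha(h)=-\int_{U_\alpha}\langle\nabla f,\nabla h\circ\psi_\alpha\rangle\,d\mu_g$ is positive and continuous, obtains a Radon measure $m_\alpha$ on each chart from the Riesz representation theorem, and then patches these together with a partition of unity, finally setting $\mathbf{m}_f(U)=m(U\backslash S)$. You instead treat $L(h)=-\int_X\langle\nabla f,\nabla h\rangle\,d\mu_g$ globally on $\lip_c(X)$ and apply Riesz--Markov--Kakutani once on the locally compact separable space $X$. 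Two things your version buys: (i) it never uses the manifold structure of $X\backslash S$, so, as you observe, admissibility would suffice and the pseudomanifold hypothesis is not essential here; (ii) your cutoff argument $|L(h)|\le\|h\|_\infty L(\psi)$, derived from positivity, supplies exactly the sup-norm continuity needed to pass from $\lip_c$ to $C_c$ --- the paper's displayed estimate only bounds $\Lambda_\alpha(h)$ by $\sup|\nabla h|\cdot\|\nabla f\|_{L^1(U_\alpha)}$, i.e.\ by the Lipschitz seminorm of $h$, and then appeals to density of $\lip_c$ in $C_c$ in the uniform topology; that extension step really requires the positivity-based sup-norm bound you make explicit. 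The paper's chart-based construction, on the other hand, sidesteps any global estimate and localizes everything, at the cost of having to verify (which it does only implicitly) that the patched measure still represents $L$ for test functions whose support meets $S$. One small caveat for your write-up: the definition of weak subharmonicity should be read as requiring $\int_X\langle\nabla f,\nabla\rho\rangle\,d\mu_g\le 0$ only for \emph{non-negative} $\rho\in\lip_c(X)$ (as you correctly use it), and your sup-norm estimate only needs $\|\nabla f\|_{L^1(U)}<\infty$ on relatively compact $U$, which already follows from $f\in W^{1,2}_{\loc}(X)$; the global $L^1$ hypothesis is what makes $\mathbf{m}_f(X)$ finite, which is how the theorem is used later.
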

\begin{proof}
We consider the Lipschitz manifold $M=X\backslash S$ and the chart $\{(U_{\alpha},\psi_{\alpha})\}$ on $M$. We show that
\begin{eqnarray*}
\Lambda _{\alpha}(h)=-\int_{U_{\alpha}}\langle \nabla f, \nabla h\circ\psi_{\alpha} \rangle~d\mu_g
\end{eqnarray*}
is a linear continuous functional on $D_{\alpha}=\lip_c(\psi_{\alpha}(U_{\alpha}))$ with respect to the topology of uniform convergence on compact sets. The linearity is obvious. We have
\begin{eqnarray*}
\Lambda _{\alpha}(h)=-\int_{U_{\alpha}}\langle \nabla f, \nabla h\circ\psi_{\alpha} \rangle~d\mu_g\leq \sup_{x\in U_{\alpha}}|\nabla h(x)|\cdot \|\nabla f\|_{L^1(U_{\alpha})}
\end{eqnarray*}
and so $\Lambda_{\alpha}$ is continuous. Since $\lip_c(U)$ is dense in $C_c(U)$ for a locally compact domain $U$, see Proposition $1.11$ in \cite{B11}, then $\Lambda_{\alpha}$ is also continuous on $C_c(\psi_{\alpha}(U_{\alpha}))$.  By assumption $f$ is subharmonic and so $\Lambda_{\alpha}$ is positive. By Riesz representation theorem,  $\Lambda_{\alpha}$ is a unique positive Radon measure. It follows that there is a positive Radon measure $m_{\alpha}$ such that
%one should follow the proof of vilani for positive distribution on C_0^{\infty}. the only part to prove is that \Lambda is linear on C_c(X) which comes from the density of Lip_c(X) in C_c(X) and even in C_0(X) by C_0 we mean those that are  less than \epsilon outside of a compact set for \epsilon small enough.
\begin{eqnarray*}
\Lambda_{\alpha}(h)=\int_{U_{\alpha}}h~dm_{\alpha}.
\end{eqnarray*}
Now we consider the partition of unity $\{\rho_{\alpha}\}$ subordinate to $\{U_{\alpha}\}$. We put $m=\sum_{\alpha}\rho_{\alpha}\psi_*(m_{\alpha})$ and we define $\textbf{m}_f(U)=m(U\backslash S)$ on each Borel set $U$. Obviously $\textbf{m}_f$ is positive and locally finite. The uniqueness comes from the uniqueness of $m_{\alpha}$.
\end{proof}
We recall a remark concerning the above theorem.
\begin{remark}
 Gigli introduced the notion of Laplacian as a set of locally finite Borel measure (see Definition $4.4$ in \cite{G12}). There he proved that on infinitesimally Hilbertian spaces this set contains only one element. %\footnote{see Definition $4.18$ in \cite{G12} for the definition of infinitesimally Hilbertian.}. Admissible Riemannian polyhedra are the examples of infintesimally Hilbertian space.
\end{remark}
 In the smooth setting, as a corollary of Gaffney's Stokes theorem, we have that on a complete Riemannian manifold every smooth subharmonic function $f$ with bounded $\|\nabla f\|_{L^1}$ is harmonic. We generalize this theorem as follows:
%\begin{thm}
%Let $(X,g,\mu_g)$ be a complete non-compact $n$-pseudomanifold. Let $f$  be a continuous weakly subharmonic belonging to $ W^{1,2}_{\loc}(X)$ such that $A_1=\|\nabla f\|_{L^1}$ is finite. Then $f$ is a harmonic function.
%\end{thm}
\begin{thm}\label{sub-har}
Let $(X,g,\mu_g)$ be a complete non-compact pseudomanifold. Let $f$  be  continuous, weakly subharmonic and  belonging to $W^{1,2}_{\loc}(X)$ such that $\|\nabla f\|_{L^1}$ is finite. Then $f$ is a harmonic function.
\end{thm}
\begin{proof}
We put $A_1=\|\nabla f\|_{L^1}$. We consider a sequence of cut-off functions $\rho_n$ for fixed $q\in X$ such that $\rho_n$ is $\frac{1}{n}$-Lipschitz and such that $\rho_n$ is equal to $1$ on $B(q,R)$  and its support is in $B(q,R+n)$. $f$ is a subharmonic function which satisfies the condition of previous lemma, so there is a unique Borel measure $\textbf{m}_f$ such that
\begin{eqnarray*}
0\leq\int_X\rho_n~d\textbf{m}_f=-\int_X\langle\nabla \rho_n, \nabla f\rangle ~d\mu_g\leq \int_X|\nabla\rho_n||\nabla f|~d\mu_g\leq\frac{1}{n}A_1
\end{eqnarray*}
and
\begin{eqnarray*}
0\leq\int_{B(q,R)}d\textbf{m}_f\leq\int_X\rho_n~d\textbf{m}_f\leq\frac{1}{n}A_1.
\end{eqnarray*}
Let $h$ be any function in $\lip_c(X)$ with support in $B(q,R)$. We have
\begin{eqnarray*}
0\leq\int_X h~d\textbf{m}_f\leq (\sup_X h)\frac{1}{n}A_1
\end{eqnarray*}
and  tending $n$ to infinity, we have
\begin{eqnarray*}
\int_X h~d\textbf{m}_f=-\int_X\langle\nabla h, \nabla f\rangle ~d\mu_g=0
\end{eqnarray*}
 and implying that $f$ is harmonic.
\end{proof}
Now we prove a generalization of Proposition $2$ in \cite{Y82}, see Theorem \ref{pseudo} for the exact statement. We give here another proof of the theorem  above for smooth pseudomanifolds under the extra assumption that $f$ should have finite energy. Instead of Theorem \ref{radon}, we goal Cheeger's Green formula on compact smooth pseudomanifolds in the proof.
%\begin{thm}
%Let $(X,g,\mu_g)$ be a complete non-compact smooth pseudo-manifold. Suppose $X$ has non-negative $N$-Ricci curvature, $\ric _{N,\mu_g}\geq 0$, for $N\geq n$. Let $f$  be a continuous weakly subharmonic function  belonging to $W^{1,2}_{\loc}(X)$, such that both $A_1=\|\nabla f\|_{L^1}$ and $A_2=\|\nabla f\|_{L^2}$ are finite. Then $f$ is constant.
%\end{thm}
\begin{proof}[Proof of Theorem \ref{pseudo}]
We put $A_1=\|\nabla f\|_{L^1}$ and $A_2=\|\nabla f\|_{L^2}$. We present the proof in several steps.
\begin{step}\normalfont
 We consider a sequence of cut-off functions $\rho_n$  as above such that the support of $\rho_n$ is in $B(q,R+n)$ for fixed $q\in X\backslash X^{n-2}$ and some $R$ and $\rho_n$ is equal to $1$ on $B(q,R)$ and $\rho_n$ is $\frac{1}{n}$-Lipschitz.
\end{step}
\begin{step}\normalfont
The $(n-2)$-skeleton in $X$, $X^{n-2}$, is a polar set. We consider  shrinking bounded neighborhoods  $U_j$ of  $X^{n-2}$ in $B(q,R+j)$, such that in each $B(q,R+j)$ we have
\begin{eqnarray*}
U_j\supset U_{j+1}\supset\ldots \supset\bigcap_{k=1}^{\infty}U_j.
\end{eqnarray*}
By the definition of polar set, for the open domains $U_j$ and $U_{j-1}$, we have $\capa(X^{n-2}\cap U_j,U_{j-1})=0$. This means that for every $j$, there exists a function $\varphi_j \in \lip(X)$ such that $\varphi_j\equiv 1$ in a neighborhood of
$X^{n-2}\cap U_j$ and $\varphi_j$ is zero outside $U_{j-1}$ and $\int_X|\nabla  \varphi_j|^2<\frac{1}{j}$. Moreover we have $0\leq\varphi_j\leq1$.

 We put $\eta_j=1-\varphi_j$. The function $\eta_j$ has the property that the closure of its support, $\overline{\supp {\eta_j}}$, is contained in $X\backslash X^{n-2}$ and the set $K_j=\overline{\supp {\eta_j}}\cap\overline{B(q,R+j)}$ is  compact. Furthermore $K_j$ make an exhaustion of $M=X\backslash X^{n-2}$.
\end{step}
\begin{step}\normalfont
 According to Theorem $2$  in \cite{GW79}, for any $j$, there exist a smooth subharmonic function $f_j$ on $M$ such that $\sup_{x\in K_j}|f_j(x)-f(x)|<\frac{1}{j}$ and $|\nabla f_{j}(x)|\leq |\nabla f(x)|$ on $K_j$.
\end{step}
\begin{step}\normalfont In this step we prove
 \begin{eqnarray*}
  \int_{M}\Delta f_j\cdot\xi_j~d\mu_{g}&=&-\int_{M}\langle\nabla f_j,\nabla\xi_j\rangle~d\mu_{g}.
 \end{eqnarray*}
 where $\xi_j=\rho_j\cdot\eta_j$. To prove the above equality, first we recall a Remark from  \cite{C79}.
\begin{remark}
Let $(Y,h)$ be a closed $n$-dimensional admissible Riemannian polyhedron, then  for $\zeta,\psi\in \dom(\Delta)$ we have the following Stokes theorem  on $Y\backslash Y^{n-2}$ (see Theorem $5.1$ in \cite{C79}),
\begin{eqnarray}\label{stokes}
\int_{Y\backslash Y^{n-2}}\Delta \zeta\cdot \psi~d\mu_h=-\int_{Y\backslash Y^{n-2}}\langle\nabla \zeta,\nabla \psi\rangle~d\mu_h.
\end{eqnarray}
Also, every closed smooth pseudomanifold $(Y,h)$ such that $h$ is equivalent to some piecewise flat metric is admissible (in the sense of Cheeger).
\end{remark}
 Now we construct the closed Riemannian polyhedron $\overline {Y}_j\subset X$ as following: Let $Y_j$ be an arbitrary Riemannian polyhedron containing $B(q,R+j)$. We consider its double  $\tilde{Y}_j$ and equip it with a Riemannian metric $\tilde{g}_j$, which is the same as Riemannian metric on $Y_j$. The Riemannian polyhedron $\overline{Y}_j=Y_j\cup \tilde{Y}_j$ with the metric $\overline{g}_j$ is an admissible closed Riemannian pseudomanifold. The metric $g_j$ on $Y_j$ is equivalent to  piecewise flat metric $g^e$ (see \cite{F01}, Chapter $4$) and so $\bar{Y}_j$ is admissible.

 We extend $\rho_j$ to $\overline{Y}_j$ such that it is zero on the copy of $Y_j$ and  $f_j$, $\eta_j$ such that they are the same functions on the copy of $Y_j$. The function $f_j$ is in  $W_{\loc}^{1,2}(\overline{Y}_j)$ (see Theorem $1.12.3.$ in \cite{KS93}).

By applying formula (\ref{stokes}) on $\overline{Y}_j$, for the functions $f_j$ and $\xi_j$, we obtain
\begin{eqnarray*}
\int_{M_j}\Delta f_j\cdot\xi_j~d\mu_{g_j}&=&-\int_{M_j}\langle\nabla f_j,\nabla\xi_j\rangle~d\mu_{g_j}
\end{eqnarray*}
where $M_j=\overline{Y}_j\backslash\overline{Y}_j^{n-2}$. Since $\xi_j\in \lip_c(M)\cap Y_j$, we can write the above Stokes formula as follows
 \begin{eqnarray*}
  \int_{M}\Delta f_j\cdot\xi_j~d\mu_{g}&=&-\int_{M}\langle\nabla f_j,\nabla\xi_j\rangle~d\mu_{g}.
 \end{eqnarray*}
\end{step}
\begin{step}\normalfont
In this step we prove that $f$ is harmonic on $M$.  From the fact that $\supp(\xi_j)\subset K_j$ we have
\begin{eqnarray*}
\int_{M}\Delta f_j\cdot\xi_j~d\mu_{g}&=&-\int_{M}\langle\nabla f_j,\nabla(\rho_j\cdot\eta_j)\rangle~d\mu_{g}\\
&=&-\int_{M}|\langle\nabla f_j,\eta_j\cdot(\nabla\rho_j)\rangle~d\mu_{g}-\int_{M}\langle\nabla f_j,\rho_j\cdot(\nabla\eta_j)\rangle~d\mu_{g}\\
 &\leq&\int_{K_j}|\nabla f_j||\nabla \rho_j|~d\mu_g+\int_{K_j}|\nabla f_j|^2~d\mu_{g}\cdot\int_{K_j}|\nabla \eta_j|^2~d\mu_{g}\\
 &\leq&\frac{1}{j}\int_{M}|\nabla f|~d\mu_g+\frac{1}{j}\int_{M}|\nabla f|^2~d\mu_{g}.
\end{eqnarray*}
So we have
\begin{eqnarray}\label{har-inq}
0\leq\int_{M}\Delta f_j\cdot\xi_j~d\mu_g\leq\frac{1}{j}(A_2+ A_1).
\end{eqnarray}
Let $h$ be any smooth function with compact support in $M\cap B(q,R)$. Then there is a $K_m$ such that the support of $h$ is in $B(q,R)\cap K_m$.  For $j$ large enough we will have $\xi_j\equiv 1$ on $K_m$ and so we have
\begin{eqnarray*}
 0\leq \int_{B(q,R)\cap K_m}\Delta f_j~d\mu_g\leq \frac{1}{j}(A_2+  A_1).
\end{eqnarray*}
 considering the formula (\ref{stokes}) as above,  for $j$ large enough we have
 \begin{eqnarray*}
0\leq\int_{M}\Delta h\cdot f_j~d\mu_g&=&\int_{M} h\cdot\Delta f_j~d\mu_g\\
&\leq& (\sup h)\cdot\frac{1}{j}(A_2+ A_1).
 \end{eqnarray*}
  Letting  $j$ go to infinity, we got  $\int_{M} \Delta h\cdot f~d\mu_g=0$. By use of Weyl's lemma $f$ is a smooth harmonic function on $M$.
\end{step}
\begin{step}\normalfont
 In this step we prove $f$ is locally Lipschitz. Since $f\in W_{\loc}^{1,2}(X)$ and by Theorem 12.2 in \cite{B11}, $f$ is harmonic on $X$. Then by Corollary 6 in \cite{K13} (see also Theorem 3.1 in \cite{J13}) $f$ is locally Lipschitz.
\begin{remark}
In \cite{KRS03}, the Lipschitz regularity of harmonic functions has been proved on metric measure spaces under the assumptions of Ahlfors regularity of the measure, Poincar\'e inequality and a heat semigroup type curvature condition. In the most recent work of \cite{K13,J13,J11} the Lipschitz regularity of the functions whose Laplacian are either in $L^p$ or in $L^{\infty}$ has been studied under more relaxed assumption on the measure. Furthermore the Cheng-Yau gradient estimate has been obtained in \cite{HKX13} for metric measure spaces under $\RCD(K,N)$ curvature dimension condition. See \cite{G13,ZhangZhu12} for the equivalent results on Alexandrov spaces.
\end{remark}
\end{step}
\begin{step}\normalfont
 Now we show $f$ is constant. Since $M$ has non-negative Ricci curvature, by the Bochner formula $|\nabla f|$ is subharmonic on $M$ and so on $X$ (see Theorem 12.2 in \cite{B11}). By Lemma \ref{yau1}, $|\nabla f|$ is constant. Since the $L^2$-norm  of $|\nabla f|$ is finite we have $|\nabla f|\equiv 0$. By Lemma \ref{bishop}, $f$ should be constant.
 \end{step}
\end{proof}
%%%%%%%%%%%%%%%%%%%%%%%%%%%%%%%%%%%%%%%%%%%%%%%%%%%%%%%%%%%%%%%%%%%%%%%%%%%%%%%%%%%%%%%%%%%%%%%%%%%%%%%%%%%%

\subsection{Vanishing Results for Harmonic Maps on Complete Smooth PseudoManifolds}\label{vanishing}
In this subsection we prove Theorems \ref{rigid4} and \ref{rigid5}.
\begin{proof}[Proof of Theorem \ref{rigid4}]
 By Remark \ref{ric-man}, we know that on the Riemannian manifold $M=X\backslash S$ we have non-negative Ricci curvature. We show that for $\epsilon>0$, $\sqrt{e(u)+\epsilon}$ is weakly subharmonic on $X$. As the restriction maps $u=u{|}_M :(M,g)\rightarrow Y$ is harmonic, we  have a Bochner type formula for harmonic map on $M$ and
\begin{eqnarray*}
\Delta e(u)>|B(u)|^2
\end{eqnarray*}
where $B(u)$ is the second fundamental form of the map $u$. Also by Cauchy-Schwarz we have
\begin{eqnarray*}
|\nabla e(u)|^2\leq 2e(u)|B(u)|^2
\end{eqnarray*}
and so for $\epsilon>0$, on $X\backslash S$
\begin{eqnarray*}
\Delta \sqrt{e(u)+\epsilon}\geq 0.
\end{eqnarray*}
See e.g. the calculation in \cite{X96} Theorem $1.3.8$. Thus $\sqrt{e(u)+\epsilon}$  is subharmonic on $X\backslash S$ and by Theorem 12.2 in \cite{B11}, subharmonicity on $X$ follows since $S$ is polar and $e(f)$ is locally bounded. Therefore
\begin{eqnarray*}
\int_X\langle\nabla \sqrt{e(u)+\epsilon},\nabla \rho \rangle~d\mu_g\leq 0\quad \quad \rho\in \lip_c(X).
\end{eqnarray*}
 As in the proof of Theorem \ref{yau1},
\begin{eqnarray}
\int_{B_{R}}|\nabla \sqrt{e(u)+\epsilon}|^2~d\mu_g \leq \frac{1}{R^2}\int_{B_{2R}}e(u)+\epsilon~d\mu_g.
\end{eqnarray}
Note that $\sqrt{e(u)+\epsilon}$  satisfies all the assumptions of the Theorem  \ref{yau1}, except the finiteness of $L^2$-norm which we do not need in this step.

Set $B'_R=B_R\backslash\{ x\in B_R, e(u)(x)=0\}$. Then
\begin{eqnarray}
\int_{B'_{R}}\frac{|\nabla( {e(u)+\epsilon})|^2}{4(e(u)+\epsilon)}d\mu_g \leq \frac{1}{R^2}\int_{B_{2R}}e(u)+\epsilon ~d\mu_g.
\end{eqnarray}
 Letting $\epsilon\rightarrow 0$ gives
\begin{eqnarray}
\int_{B'_{R}}\frac{|\nabla {e(u)}|^2}{4e(u)}~d\mu_g \leq \frac{1}{R^2}\int_{B_{2R}}e(u) ~d\mu_g,
\end{eqnarray}
and  letting $R\rightarrow \infty$ and by finiteness of the energy we have
\begin{eqnarray}
\int_{B'_{R}}\frac{|\nabla {e(u)}|^2}{4e(u)}d\mu_g \leq 0
\end{eqnarray}
which implies that $e(u)$ is constant. If $e(u)$ is not zero everywhere this means that the volume of $X$ is finite. By Lemma \ref{bishop}, this is impossible and so $u$ is constant.
\end{proof}
Now we prove Theorem \ref{rigid5}. By the following lemma, the function $d(u(\cdot),q)$, where $q$ is an arbitrary point in $Y$, is subharmonic under suitable assumption on the curvature of $Y$. We refer the reader to \cite{F01} Lemma $10.2$, for the proof.
\begin{lemme}\label{distance}
Let $(X,g)$ be an admissible Riemannian polyhedron, $g$ simplexwise smooth. Let $(Y, d_Y)$ be a simply connected complete geodesic space
of non-positive curvature, and let $u\in W^{1,2}_{\loc} (X, Y)$ be  a locally energy minimizing map. Then $u$ is a locally essentially bounded map and for any $q \in Y$, the function $d(u(\cdot),q)$ of class $ W^{1,2}_{\loc} (X, Y)$ is weakly subharmonic and in particular essentially locally bounded.
\end{lemme}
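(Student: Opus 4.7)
The two claims rest on two features of the Hadamard target $Y$: the distance function $d_Y(\cdot, q)$ is $1$-Lipschitz and convex along geodesics, and each closed ball $\bar B(q, r)$ is convex with a $1$-Lipschitz nearest-point projection $\pi_r : Y \to \bar B(q, r)$. Set $f := d_Y(u(\cdot), q)$; the Korevaar--Schoen chain rule gives $f \in W^{1,2}_{\loc}(X)$ with $|\nabla f|^2 \leq e(u)$ almost everywhere directly from the $1$-Lipschitzness of $d_Y(\cdot, q)$, so the nontrivial claims are weak subharmonicity and local essential boundedness.

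For weak subharmonicity, I would fix a relatively compact $U \Subset X$ on which $u$ is energy minimizing, together with a non-negative $\varphi \in \lip_c(U)$ with $\sup \varphi$ small. For a regularization parameter $\delta > 0$, define a competitor $u_{t\varphi}$ by moving $u(x)$ a (rescaled) distance $t\varphi(x)$ along the geodesic $\gamma_x$ from $u(x)$ to $q$: set $u_{t\varphi}(x) := \gamma_x\!\bigl(t\varphi(x)/(f(x) + \delta)\bigr)$. Then $u_{t\varphi} \equiv u$ outside $\supp \varphi$ and, to leading order in $\delta$, $f \circ u_{t\varphi} = f - t\varphi$. The Reshetnyak CAT$(0)$ quadrilateral comparison applied to the four points $u(x), u(x'), q, q$, combined with the Korevaar--Schoen density formula, gives
\[
e(u_{t\varphi}) \leq e(u) - 2t\, \langle \nabla f, \nabla \varphi\rangle + O(t^2)
\]
in the weak-$*$ limit of approximate energy densities. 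Integrating over $X$ and using $\tfrac{d}{dt}\bigl|_{t = 0^+} E(u_{t\varphi}) \geq 0$ from energy minimality produces $\int_X \langle \nabla f, \nabla \varphi\rangle\, d\mu_g \leq 0$; letting $\delta \to 0$ removes the regularization. Since $\varphi \geq 0$ in $\lip_c(X)$ is arbitrary, $f$ is weakly subharmonic.

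Local essential boundedness of $u$, and hence of $f$, now follows from the weak subharmonicity of $f$ via the standard Moser iteration / local maximum principle for weakly subharmonic functions in $W^{1,2}_{\loc}(X)\cap L^2_{\loc}(X)$ on an admissible Riemannian polyhedron (available via the Poincar\'e inequality and ball doubling established earlier in the paper). Alternatively one can argue directly with $\pi_r$: if $\{d_Y(u,q) > r\}\cap U$ had positive measure for every $r$, then $\pi_r \circ u$ would be a strictly better competitor agreeing with $u$ near $\partial U$, contradicting energy minimality.

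The main obstacle is extracting the linear coefficient $-2\langle \nabla f, \nabla \varphi\rangle$ from the pointwise Reshetnyak estimate in the Korevaar--Schoen density framework on the polyhedron: one must control the approximate densities $e_\varepsilon(u_{t\varphi})$ uniformly in $\varepsilon, t, \delta$ and across the singular strata of $X$, so that the pointwise inequality for $d_Y(u_{t\varphi}(x), u_{t\varphi}(x'))^2$ integrates cleanly against $\mu_g$. Once this is done, the rest is the Riemannian first-variation calculation, carried out in the polyhedral setting as in \cite{F01}.
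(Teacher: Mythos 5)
The paper does not actually prove this lemma: the statement is quoted from Eells--Fuglede and the text explicitly refers the reader to \cite{F01}, Lemma $10.2$, for the proof. Your plan is in substance that proof (and Korevaar--Schoen's): a first variation of $u$ along the geodesics toward $q$, with the linear term controlled by the CAT$(0)$ quadrilateral comparison, followed by a sub-mean-value argument for the essential boundedness. So you are reconstructing the cited argument rather than finding a genuinely different route.

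Two points deserve a warning. First, your ``fixed distance'' parametrization $u_{t\varphi}(x)=\gamma_x\bigl(t\varphi(x)/(f(x)+\delta)\bigr)$ makes the interpolation fraction $\lambda(x)=t\varphi(x)/(f(x)+\delta)$ only a $W^{1,2}\cap L^{\infty}$ function of $x$ (since $f$ is a priori only Sobolev, not Lipschitz), so both the admissibility of the competitor and the passage to the limit in the approximate energy densities need justification beyond what you give; this is exactly the ``main obstacle'' you flag, and it is self-inflicted. The standard and cleaner route is to vary by a fixed \emph{fraction} $t\lambda(x)$ with $\lambda\in\lip_c(X)$, $\lambda\ge 0$: the quadrilateral comparison then yields $\int\bigl(2\lambda e(u)+\langle\nabla\lambda,\nabla f^2\rangle\bigr)\,d\mu_g\le 0$, i.e.\ $\Delta f^2\ge 2e(u)\ge 2|\nabla f|^2$ weakly, and the subharmonicity of $f$ itself follows by inserting the test function $\varphi/(f+\delta)$ into this inequality and letting $\delta\to 0$ (using that $\nabla f=0$ a.e.\ on $\{f=0\}$). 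This decouples the geometric comparison from the regularization. Second, your alternative boundedness argument via the projection $\pi_r$ does not work as stated: $\pi_r\circ u$ agrees with $u$ near $\partial U$ only if $u$ already maps a neighbourhood of $\partial U$ into $\bar B(q,r)$, which you do not know; keep the sub-mean-value inequality for the weakly subharmonic $f\in W^{1,2}_{\loc}(X)\cap L^{2}_{\loc}(X)$, which is available on admissible polyhedra from doubling and the Poincar\'e inequality.
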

We have
%\begin{prop}
%Let $(X,g,\mu_g)$ be a complete non-compact smooth pseudomanifold. Suppose $X$ has non-negative ${{N}}$-Ricci curvature, $\ric _{N,\mu_g}\geq 0$, for $N\geq n$ finite. Let $Y$ be a simply connected complete geodesic space of non-positive curvature and $u:(X,g)\rightarrow Y$ a continuous harmonic map with finite energy belonging to $ W^{1,2}_{\loc}(X, Y)$. If  $\int_M\sqrt{e(u)}d\mu_g<\infty$, then $u$ is a constant map.
%\end{prop}
\begin{proof}[Proof of Theorem \ref{rigid5}]
 According the lemma  above the function $v(x)=d(u(x),u(x_0))$ for some $x_0\in X$, is weakly subharmonic. We know that $|\nabla v|^2\leq ce({u})$, where $c$ is a constant. $v$ is a continuous subharmonic function whose gradient is bounded by an $L^1$ and $L^2$ integrable function.  According to Lemma \ref{yau1}, $v$ is a constant function and so $u$ is a constant map.
\end{proof}
\begin{remark}
Using above argument we have also showed every continuous harmonic map $u:(X,g)\rightarrow Y$ belonging to $W^{1,2}_{\loc}(X, Y)$ with $\int_M\sqrt{e(u)}~d\mu_g<\infty$, where  $(X,g,\mu_g)$ is a complete,
non-compact $n$-pseudomanifold with non-negative $n$-Ricci curvature $\CD(0,n)$ and $Y$ a simply connected, complete geodesic space of non-positive curvature, is Lipschitz continuous.
\end{remark}

%%%%%%%%%%%%%%%%%%%%%%%%%%%%%%%%%%%%%%%%%%%%%%%%%%%%%%%%%%%%%%%%%%%%%%%%%%%%%%%%%%%%%%%%%%%%%%%%%%%%%%%%%%%%%%%%%%%%%%%%%%%%%%%%%%%%%%%%%
%%%%%%%%%%%%%%%%%%%%%%%%%%%%%%%%%%%%%%%%%%%%%%%%%%%%%%%%%%%%%%%%%%%%%%%%%%%%%%%%%%%%%%%%%%%%%%%%%%%%%%%%%%%%%%%%%%%%%%%%%%%%%%%%%%%%%%%%
\section{ $2$-Parabolic Riemannian Polyhedra}\label{parabolic}
In this last section we prove Liouville-type theorems for harmonic maps defined on a Riemannian polyhedra $X$ without any completeness or Ricci curvature bound assumption. We assume instead $X$ to be $2$-parabolic. Some of these results extend known results for the case of Riemannian manifolds. As for Riemannian manifolds, we say that a domain $\Omega\subset X$ in an admissible Riemannian polyhedra $X$ is $2$-parabolic, if $\capa(D,\Omega)=0$ for every compact set $D$ in $\Omega$, otherwise $2$-hyperbolic. A reference on this subject is \cite{GT02}, where the notion is discussed for general metric measure spaces. Our main results in this section  are Theorems \ref{rigid6} and \ref{rigid7}.  We will first need the following characterization of $2$-parabolicity.
 \begin{lemme}\label{approx}
 The domain $\Omega$ is $2$-parabolic if and only if there exists a sequence of functions $\rho_j\in \lip_c(\Omega)$ such that $0\leq \rho_j\leq 1$, $\rho_j$ converges to $1$ uniformly on every compact subset of $\Omega$ and
 \begin{eqnarray*}
 \int_{\Omega}|\nabla \rho_j|^2~d\mu_g\rightarrow 0.
 \end{eqnarray*}
 \end{lemme}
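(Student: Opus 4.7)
The plan is to prove both implications, this being a polyhedral analogue of the classical characterization of parabolicity on Riemannian manifolds (cf.\ Proposition 3.1 of \cite{T99}).

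For sufficiency ($\Leftarrow$), suppose such a sequence $\{\rho_j\}$ exists and let $D\subset\Omega$ be compact. Uniform convergence gives $\rho_j\geq 1/2$ on $D$ for all sufficiently large $j$, so $2\rho_j\in\lip_c(\Omega)\subset W^{1,2}_0(\Omega)$ is a competitor in the definition of $\capa(D,\Omega)$, whence
\begin{equation*}
\capa(D,\Omega)\leq 4\int_\Omega|\nabla\rho_j|^2\,d\mu_g\longrightarrow 0.
\end{equation*}

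For necessity ($\Rightarrow$), I would fix a compact exhaustion $K_1\subset K_2\subset\cdots$ of $\Omega$ with $K_j\subset\mathrm{int}(K_{j+1})$ and $\bigcup_j K_j=\Omega$, which exists since $\Omega$ is locally compact and $\sigma$-compact. By $2$-parabolicity $\capa(K_j,\Omega)=0$, and the remark following the definition of variational capacity produces $u_j\in W^{1,2}_0(\Omega)$ with $0\leq u_j\leq 1$, $u_j\equiv 1$ on an open neighborhood $V_j$ of $K_j$, and $\int_\Omega|\nabla u_j|^2\,d\mu_g<1/(8j)$. These $u_j$ already have the right shape; the only task left is to upgrade them to Lipschitz functions with compact support, which is the main technical obstacle.

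To make this upgrade, I would fix a Lipschitz cutoff $\chi_j$ with $0\leq\chi_j\leq 1$, $\chi_j\equiv 1$ on a smaller open neighborhood $V'_j\Subset V_j$ of $K_j$, and $\supp\chi_j\subset V_j$. By density of $\lip_c(\Omega)$ in $W^{1,2}_0(\Omega)$, pick $\tilde u_j\in\lip_c(\Omega)$ with $\|u_j-\tilde u_j\|_{W^{1,2}}<\epsilon_j$ for some $\epsilon_j>0$ to be chosen. Truncate via $\tilde u_j^t:=\max(0,\min(1,\tilde u_j))\in\lip_c(\Omega)$; this preserves compact support and the gradient bound, and since $0\leq u_j\leq 1$ it satisfies $\|u_j-\tilde u_j^t\|_{L^2}\leq\|u_j-\tilde u_j\|_{L^2}<\epsilon_j$. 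Set
\begin{equation*}
\rho_j:=\chi_j+(1-\chi_j)\tilde u_j^t\in\lip_c(\Omega),
\end{equation*}
so that $0\leq\rho_j\leq 1$ and $\rho_j\equiv 1$ on $V'_j\supset K_j$. The product rule gives $\nabla\rho_j=(1-\tilde u_j^t)\nabla\chi_j+(1-\chi_j)\nabla\tilde u_j^t$; since $u_j\equiv 1$ on $V_j\supset\supp\nabla\chi_j$, the first term is bounded by $\|\nabla\chi_j\|_{L^\infty}\cdot\epsilon_j$ in $L^2$, and the second by $\|\nabla u_j\|_{L^2}+\epsilon_j$. Choosing $\epsilon_j$ small enough depending on $\|\nabla\chi_j\|_{L^\infty}$ yields $\int_\Omega|\nabla\rho_j|^2\,d\mu_g<1/j$. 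Uniform convergence $\rho_j\to 1$ on an arbitrary compact $D\subset\Omega$ is then immediate, since $D\subset K_{j_0}$ for some $j_0$ and $\rho_j\equiv 1$ on $K_{j_0}$ for all $j\geq j_0$.
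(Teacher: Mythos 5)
Your proof is correct and follows essentially the same route as the paper's: for sufficiency, $2\rho_j$ (the paper uses $\rho_j\geq 1/2$ on a compact set for large $j$) is an admissible competitor forcing $\capa(D,\Omega)=0$, and for necessity one takes a compact exhaustion with $\capa(K_j,\Omega)=0$ and uses the density of $\lip_c(\Omega)$ in $W^{1,2}_0(\Omega)$ to produce the $\rho_j$. The only difference is that you spell out the gluing/truncation step needed to turn the $W^{1,2}_0$ competitors into Lipschitz functions identically $1$ on $K_j$, a detail the paper leaves implicit behind the density remark.
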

\begin{proof}
First suppose $\Omega$ is $2$-parabolic. Then every compact set $D\subset\Omega$, with nonempty interior satisfies  $\capa(D,\Omega)=0$. We choose an exhaustion $D\subset D_1\subset D_2\subset\ldots \subset\Omega$ of $\Omega$ by compact subsets
%By Proposition \ref{para}, we know
 such that $\capa(D_j,\Omega)=0$ for all $j$. Hence we can find the function $\rho_j\in \lip_c(\Omega)$ (using the fact that $\lip_c(\Omega)$ is dense in $W^{1,2}_0(\Omega)$) such that $\rho_j\equiv 1$ on $D_j$ and  $\int_{\Omega}|\nabla \rho_j|^2~d\mu_g\leq 1\slash {j^2}$. We have constructed the desired sequence $\rho_j$. \\
Conversely, suppose there exists, a sequence $\rho_j\in\lip_c(\Omega)$ with the stated properties.
Then we can find a compact subset $B\subset\Omega$ and  $j_0$ such that $\rho_j\geq 1\slash 2$ for every $j\geq j_0$. It follows that $\capa(B,\Omega)=0$
 \end{proof}
The following lemma shows that the $2$-parabolicity  remains after removing the singular set of a Riemannian polyhedron.
%%%%%%%%%%%%%%%%%%%%%%%%%%%%%%%%%%%%%%%%%%%%%%%%%%%%%%%%%%%%%%%%%%%%%%%%%%%%%%%%%%%%%%%%%%%%%%%%%%%%%%%%%%%%%%%%%%%%%%%%%%%%%%%%%%%%%%%%%%%%%%%%%%%%%%%%%%%%%%%%%%%%%
\begin{lemme}\label{sing-para}
If $X$ is a $2$-parabolic admissible Riemannian polyhedron and $E\subset X$ is a polar set, then $\Omega:=X\backslash E$ is $2$-parabolic.
\end{lemme}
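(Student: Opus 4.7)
The plan is to produce, using Lemma \ref{approx}, a sequence of admissible test functions on $\Omega$ by multiplying two families of cut-offs: those witnessing the $2$-parabolicity of the ambient $X$, and those arising from polarity of $E$. Since $X$ is $2$-parabolic, Lemma \ref{approx} yields $\eta_j\in\lip_c(X)$ with $0\le\eta_j\le 1$, $\eta_j\to 1$ uniformly on compact subsets of $X$, and $\int_X|\nabla\eta_j|^2\,d\mu_g\to 0$. Without loss of generality (and as in the intended application where $E\subset X^{(n-2)}$) we may assume $E$ is closed; otherwise we replace it by its closure, which is still polar in the relevant cases.

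Next, I would fix the shrinking neighborhoods $U_j:=\{x\in X:d(x,E)<1/j\}$ of $E$. By Lemma \ref{polar} applied to the closed polar set $E$ with neighborhood $U_j$ and $\epsilon=1/j$, there exists $\varphi_j\in\lip(X)$ with $0\le\varphi_j\le 1$, $\supp(\varphi_j)\subset X\setminus E$, $\varphi_j\equiv 1$ on $X\setminus U_j$, and $\int_X|\nabla\varphi_j|^2\,d\mu_g<1/j$. Define
\begin{equation*}
\rho_j := \eta_j\,\varphi_j.
\end{equation*}
Then $\rho_j$ is Lipschitz (product of bounded Lipschitz functions), satisfies $0\le\rho_j\le 1$, and its support is closed in $X$, contained in the compact set $\supp(\eta_j)$ and in $X\setminus E=\Omega$, hence compact in $\Omega$. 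Thus $\rho_j\in\lip_c(\Omega)$.

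For any compact $K\subset\Omega$, closedness of $E$ and $K\cap E=\emptyset$ give $d(K,E)>0$, so $K\cap U_j=\emptyset$ for $j$ large, and $\varphi_j\equiv 1$ on $K$; combined with $\eta_j\to 1$ uniformly on $K$, this yields $\rho_j\to 1$ uniformly on $K$. For the energy, using the Leibniz rule and the bounds $|\eta_j|,|\varphi_j|\le 1$,
\begin{equation*}
\int_\Omega|\nabla\rho_j|^2\,d\mu_g \;\le\; 2\int_X|\nabla\eta_j|^2\,d\mu_g + 2\int_X|\nabla\varphi_j|^2\,d\mu_g \;\longrightarrow\; 0.
\end{equation*}
By Lemma \ref{approx} this forces $\Omega$ to be $2$-parabolic.

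There is no real obstacle here; the only delicate points are ensuring that the multiplied sequence still converges uniformly to $1$ on an arbitrary compact subset of $\Omega$ (which is where the shrinking neighborhoods $U_j$ and the positive distance $d(K,E)$ are used) and that the product of one globally defined cut-off and one compactly supported cut-off produces a function that is genuinely compactly supported inside the open set $\Omega$. Both follow directly from the stated properties of $\eta_j$ and $\varphi_j$.
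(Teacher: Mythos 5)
Your proof is correct and follows essentially the same route as the paper: multiply the cut-offs $\eta_j$ from Lemma \ref{approx} (witnessing $2$-parabolicity of $X$) by the cut-offs $\varphi_j$ from Lemma \ref{polar} (witnessing polarity of $E$) and verify the three conditions of Lemma \ref{approx} for the products. Your write-up is in fact more careful than the paper's one-line conclusion, in particular in checking compact support in $\Omega$, the uniform convergence on compacta via $d(K,E)>0$, and the Leibniz-rule energy estimate.
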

\begin{proof}
$X$ is $2$-parabolic, so by Lemma \ref{approx}, there are an exhaustion of $X$ and a sequence of function $\rho_j\in\lip_c(X)$  such that $0\leq\rho_j \leq1$ and $\rho_j\rightarrow 1$ uniformly on each compact set, and $\int_X |\nabla \rho_j|^2~d\mu_g\rightarrow 0$. Also by Lemma \ref{polar}, there exist another sequence of functions $ \varphi_j$ with support in $X\backslash E$ such that $ \varphi_j\rightarrow 1$ on each compact set of $X\backslash E$ and $\int_X| \nabla\varphi_j|^2~d\mu_g\rightarrow 0$. The functions $\rho_j \varphi_j$ on $\Omega$ provides the condition for $2$-parabolicity in Lemma \ref{approx}.
\end{proof}
The following result is an extension of Theorem $5.2$  in \cite{H90} to admissible Riemannian polyhedra.
 %\begin{prop}\label{super}
%Let $(X,g)$ be $2$-parabolic admissible Riemannian polyhedron. Suppose $f$ in $W^{1,2}_{\loc}(X)$ is a positive, continuous superharmonic function on $X$. Then $f$ is constant.
%\end{prop}
%%%%%%%%%%%%%%%%%%%%%%%%%%%%%%%%%%%%%%%%%%%%%%%%%%%%%%%%%%%%%%%%%%%%%%%%%%%%%%%%%%%%%%%%%%%%%%%%%%%%%%%%%%%%%%%%%%%%%%%%%%%%%%%55
 \begin{prop}\label{holop1}
Let $(X,g,\mu_g)$ be $2$-parabolic admissible Riemannian polyhedron. Suppose $f$ in $W^{1,2}_{\loc}(X)$ is a positive, continuous superharmonic function on $X$. Then $f$ is constant.
\end{prop}
\begin{proof}
Since $f$ is continuous, for any $\epsilon$ and at any point $x_0$ in $X$ there exist a relatively compact neighborhood $B_0$ of $x_0$ such that $f(x)>f(x_0)-\epsilon$ on $\overline{B_0}$.
%(we choose the triangulation such that $x_0$ is a vertex and the star of $x_0$ is equal to $B_0$).
$X$ is $2$-parabolic, so $\capa(B_0,X)=0$. Consider an exhaustion of $X$ by regular domains $U_i$ such that $B_0\Subset U_1\Subset U_2\Subset \ldots\Subset X$. By Corollary $11.25$ in \cite{B11}, such exhaustion exists.

There exist functions $u_i$ which are harmonic on $U_i\backslash \overline{B_0}$,  $u_i\equiv1$ on $B_0$ and $u_i\equiv0$ on $X\backslash U_i$ (See \cite{GT01} and also Lemma $11.17$ and $11.19$ in \cite{B11}). The maximum principle  (see Theorem $5.3$ in \cite{F01} or Lemma $10.2$ in \cite{B11} for the comparison principle) implies that
\begin{align*}
\left\{ \begin{array}{l}
0\leq u_i\leq 1\\
u_{i+1}\geq u_i \quad \text{on} ~U_i.\\
\end{array} \right.
\end{align*}
Define the function $h_i=(f(x_0)-\epsilon)u_i$, we have $\lim_{i\rightarrow \infty}h_i=f(x_0)-\epsilon$. On the other hand $f\geq h_i$ on the boundary of $U_i\backslash \overline{B_0}$. By the comparison principle $f\geq h_i$ in $U_i\backslash \overline{B_0}$, so $f\geq f(x_0)-\epsilon$ on $X$. Letting $\epsilon \rightarrow 0$, we obtain $f\geq f(x_0)$ on $X$. If $f$ is non-constant, there
exist $x_1\in X$ with $f(x_1)> f(x_0)$. By the same argument we obtain $f>f(x_1)$. This is a contradiction and thus $f$ is constant.
\end{proof}
We prove  the analogue of  Theorem \ref{pseudo}, for $2$-parabolic admissible Riemannian polyhedra. 
\begin{prop}\label{para-har2}
Let $(X,g,\mu_g)$ be a $2$-parabolic pseudomanifold. Let $f$ in $W^{1,2}_{\loc}(X)$ be a continuous, weakly subharmonic function, such that $\|\nabla f\|_{L^1}$ and $\|\nabla f\|_{L^2}$ are finite. Then $f$ is harmonic.
\end{prop}
\begin{proof}
 Since $X$ is $2$-parabolic, by Lemma \ref{approx} for every compact set $D\subset X$, and an arbitrary exhaustion $D\subset D_1\subset D_2\subset\ldots \subset X$ of $X$ by compact subsets, there exist a sequence of functions $\rho_j\in \lip_c(X)$ such that $\rho_j\equiv 1$ on $D_j$  and  $\int_{X}|\nabla \rho_j|^2~d\mu_g\leq 1\slash {j^2}$.
\begin{eqnarray*}
0\leq -\int_X\langle \nabla \rho_j,\nabla f\rangle~d\mu_g&\leq&\left(\int_X|\nabla \rho_j|^2~d\mu_g\right)^{\frac{1}{2}} \left(\int_X|\nabla f|^2~d\mu_g\right)^{\frac{1}{2}}\\
&\leq&\frac{1}{j}\|\nabla f\|_{L^2}^2.
\end{eqnarray*}
By Lemma \ref{radon}, there is a locally finite Borel measure $\textbf{m}_f$ such that
\begin{eqnarray*}
0<\int_D \textbf{m}_f\leq \int_X\rho_j~\textbf{m}_f\leq|\int_X \langle\nabla \rho_j,\nabla f  \rangle~d\mu_g|\leq\frac{1}{j}\|\nabla f\|_{L^2}^2.
\end{eqnarray*}
Now let $h$ be an arbitrary test function in $\lip_c(X)$ where  its support is in $D$. We have
\begin{eqnarray*}
0\leq\int_D h~ \textbf{m}_f\leq (\sup_X h)\frac{\|\nabla f\|_{L^2}^2}{j}
\end{eqnarray*}
 and so $f$ is harmonic on $X$.
\end{proof}
 Similarly we have the following result generalizing Theorem $5.9$ in \cite{H90}.
\begin{prop}\label{holop2}
Let $(X,g,\mu_g)$ be $2$-parabolic admissible Riemannian polyhedron.  Let $f$ in $W^{1,2}_{\loc}(X)$ be a harmonic function such that $\|\nabla f\|_{L^2}$ is finite. Then $f$ is constant.
\end{prop}
\begin{proof}
%We will give two proofs. \\
%  Now we will prove that $f$ is constant on $X$.
%We will give two proofs. \\
%  Now we will prove that $f$ is constant on $X$.
Set
\begin{eqnarray*}
f_i=\max(-i,\min(i,f)).
\end{eqnarray*}
Let $U_j$ be an exhaustion of $X$ by regular domains $U_j\subset U_{j+1}\Subset X$. There is a continuous function $u_{i,j}$ such that $u_{i,j}$ are harmonic on $U_j$ and $u_{i,j}=f_i$ in $X\backslash U_i$. Also $u_{i,j}$ is continuous on $X$ and $\|\nabla u_{i,j}\|_{L^2}$ is finite.   We have $-i\leq u_{i,j}\leq i$ . According to Theorem $6.2$ in  \cite{F01}, $u_{i,j}$ are H\"{o}lder continuous (after correction on a null set), and since they are uniformly bounded, by Theorem $6.3$ in \cite{F01},  they are locally uniformly H\"{o}lder equicontinuous  and by Theorem $9.37$ in \cite{B11}, there is a subsequence which converges locally uniformly to some $u_i$ as $j\rightarrow \infty$. Note that the definition of harmonicity as in \cite{B11} is consistent with our definition. The function $u_i$ is bounded and harmonic and hence is constant. Moreover $u_{i,j}-f_i\in L_{0}^{1,2}$ and so $f_i\in L_{0}^{1,2}$. Therefore
\begin{eqnarray*}
\int_X|\nabla f|^2~d\mu_g=\lim_{i\rightarrow \infty}\int_X\langle\nabla f,\nabla f_i\rangle~d\mu_g=0
\end{eqnarray*}
 and  $f$ is constant.\\
 % Proof 2. According to  Proposition \ref{sing-para}, $X\backslash S$ is $2$-parabolic Riemannian manifold and $f$ is harmonic on $X\backslash S$. By theorem $5.9$ in \cite{H90}, $f$ is constant.
  \end{proof}
By use of Lemma \ref{distance} and the above propositions, the proofs of  Corollaries \ref{rigid6} and \ref{rigid7} are straightforward.

%\bibliographystyle{alpha}
%\bibliography{biblio}

%\end{document}
\def\cprime{$'$} \def\cprime{$'$} \def\cprime{$'$} \def\cprime{$'$}

\end{document}